\tikzset{
	symbol/.style={
		draw=none,
		every to/.append style={
			edge node={node [sloped, allow upside down, auto=false]{$#1$}}}
	}
}
\begin{document}
	\pdfrender{StrokeColor=black,TextRenderingMode=2,LineWidth=0.2pt}	
	
	\title{Extensions of valuations to rational function fields over completions}

	\author{Arpan Dutta}

	\def\NZQ{\mathbb}               
	\def\NN{{\NZQ N}}
	\def\QQ{{\NZQ Q}}
	\def\ZZ{{\NZQ Z}}
	\def\RR{{\NZQ R}}
	\def\CC{{\NZQ C}}
	\def\AA{{\NZQ A}}
	\def\BB{{\NZQ B}}
	\def\PP{{\NZQ P}}
	\def\FF{{\NZQ F}}
	\def\GG{{\NZQ G}}
	\def\HH{{\NZQ H}}
	\def\UU{{\NZQ U}}
	\def\P{\mathcal P}
	
	%
	%
	\let\union=\cup
	\let\sect=\cap
	\let\dirsum=\oplus
	\let\tensor=\otimes
	\let\iso=\cong
	\let\Union=\bigcup
	\let\Sect=\bigcap
	\let\Dirsum=\bigoplus
	\let\Tensor=\bigotimes
	
	\theoremstyle{plain}
	\newtheorem{Theorem}{Theorem}[section]
	\newtheorem{Lemma}[Theorem]{Lemma}
	\newtheorem{Corollary}[Theorem]{Corollary}
	\newtheorem{Proposition}[Theorem]{Proposition}
	\newtheorem{Problem}[Theorem]{}
	\newtheorem{Conjecture}[Theorem]{Conjecture}
	\newtheorem{Question}[Theorem]{Question}
	
	\theoremstyle{definition}
	\newtheorem{Example}[Theorem]{Example}
	\newtheorem{Examples}[Theorem]{Examples}
	\newtheorem{Definition}[Theorem]{Definition}
	
	\theoremstyle{remark}
	\newtheorem{Remark}[Theorem]{Remark}
	\newtheorem{Remarks}[Theorem]{Remarks}
	
	\newcommand{\trdeg}{\mbox{\rm trdeg}\,}
	\newcommand{\rr}{\mbox{\rm rat rk}\,}
	\newcommand{\sep}{\mathrm{sep}}
	\newcommand{\ac}{\mathrm{ac}}
	\newcommand{\ins}{\mathrm{ins}}
	\newcommand{\res}{\mathrm{res}}
	\newcommand{\Gal}{\mathrm{Gal}\,}
	\newcommand{\ch}{\operatorname{char}}
	\newcommand{\Aut}{\mathrm{Aut}\,}
	\newcommand{\kras}{\mathrm{kras}\,}
	\newcommand{\dist}{\mathrm{dist}\,}
	\newcommand{\ord}{\mathrm{ord}\,}
	
	\newcommand{\n}{\par\noindent}
	\newcommand{\nn}{\par\vskip2pt\noindent}
	\newcommand{\sn}{\par\smallskip\noindent}
	\newcommand{\mn}{\par\medskip\noindent}
	\newcommand{\bn}{\par\bigskip\noindent}
	\newcommand{\pars}{\par\smallskip}
	\newcommand{\parm}{\par\medskip}
	\newcommand{\parb}{\par\bigskip}
	\let\epsilon\varepsilon
	\let\phi=\varphi
	\let\kappa=\varkappa
	
	\def \a {\alpha}
	\def \b {\beta}
	\def \s {\sigma}
	\def \d {\delta}
	\def \g {\gamma}
	\def \o {\omega}
	\def \l {\lambda}
	\def \th {\theta}
	\def \D {\Delta}
	\def \G {\Gamma}
	\def \O {\Omega}
	\def \L {\Lambda}
	%
	%
	\textwidth=15cm \textheight=22cm \topmargin=0.5cm
	\oddsidemargin=0.5cm \evensidemargin=0.5cm \pagestyle{plain}


	\address{Discipline of Mathematics, School of Basic Sciences, IIT Bhubaneswar, Argul,
		Odisha, India, 752050.}
	\email{arpandutta@iitbbs.ac.in}
	
	\date{\today}
	
	\thanks{}
	
	\keywords{Valuation, extension of valuation to rational function fields, completion of valuations, minimal pairs, key polynomials, implicit constant fields, pseudo Cauchy sequences.}
	
	\subjclass[2010]{12J20, 13A18, 12J25}	
	
	\maketitle

	
	\begin{abstract}
		Given a valued field $(K,v)$ and its completion $(\widehat{K},v)$, we study the set of all possible extensions of $v$ to $\widehat{K}(X)$. We show that any such extension is closely connected with the underlying subextension $(K(X)|K,v)$. The connections between these extensions are studied via minimal pairs, key polynomials, pseudo-Cauchy sequences and implicit constant fields. As a consequence, we obtain strong ramification theoretic properties of $(\widehat{K},v)$. We also give necessary and sufficient conditions for $(K(X),v)$ to be dense in $(\widehat{K}(X),v)$.
	\end{abstract}

	\tableofcontents
	
	
	\section{Introduction}
    
	Given a valued field $(K,v)$ and a rational function field $K(X)$ in one variable over $K$, it is a deep and classical problem to give a satisfactory characterization and description of all extensions of $v$ to $K(X)$. It is sufficient to understand the set of all extensions of $v$ to the polynomial ring $K[X]$, since the extended valuation necessarily satisfies $v(f/g) = vf-vg$ for all $f,g \in K[X]$. The typical example of such an extended valuation is the Gau{\ss} valuation: for a polynomial $f(X) := \sum_{i=0}^{n} c_i X^i \in K[X]$, we set $vf:= \min \{ vc_i \}$. Note that we have $vX=0$ in the Gau{\ss} valuation. Assigning an arbitrary value lying in some ordered abelian group containing $vK$ to $X$ and shifting the origin of the polynomial to an arbitrary element of $K$ gives the monomial valuations, which are prototypical examples of the so-called valuation transcendental extensions. Not every extension of $v$ to $K(X)$ is valuation transcendental; however, the rest can be realized as limits of a certain class of valuation transcendental extensions. A thorough understanding of this extension problem has given rise to a whole host of literature and several different inter-connected notions. The fundamental objects of study are minimal pairs of definition [\ref{AP sur une classe}, \ref{APZ all valns on K(X)}, \ref{APZ characterization of residual trans extns}, \ref{APZ2 minimal pairs}], key polynomials [\ref{MacLane key pols}, \ref{Vaquie key pols}, \ref{Nova Spiva key pol pseudo convergent}, \ref{Novacoski key poly and min pairs}], pseudo-Cauchy sequences [\ref{Ostrowski pcs}, \ref{Kaplansky}] and implicit constant fields [\ref{Kuh value groups residue fields rational fn fields}, \ref{Dutta min fields implicit const fields}, \ref{Dutta min pairs inertia ram deg impl const fields}, \ref{Dutta imp const fields key pols valn alg extns}]. We refer the reader to Section \ref{Sect prelims} for all the relevant definitions. We would also like to emphasise that the linked references offer but only a tiny glimpse into the vast body of work devoted to this problem.
	
	\pars In this article, we undertake a systematic study of the extension $(\widehat{K}(X)|\widehat{K},v)$, where $\widehat{K}$ denotes the completion of the valued field $(K,v)$. Our investigation shows that the extension $(\widehat{K}(X)|\widehat{K},v)$ is intimately connected with the underlying extension $(K(X)|K,v)$. The connection between these extensions is further explored in detail in terms of the aforementioned objects, viz. minimal pairs, key polynomials and implicit constant fields. Our first important result is the following:

	\begin{Theorem}\label{Thm unique common extn}
		Let $(K,v)$ be a valued field, $(\widehat{K},\widehat{v})$ be its completion and $w$ be an extension of $v$ to $K(X)$. Fix an extension $\overline{\widehat{v}}$ of $\widehat{v}$ to the algebraic closure $\overline{\widehat{K}}$ of $\widehat{K}$. Fix a common extension $\overline{w}$ of $w$ and $\overline{v}$ to $\overline{K}(X)$, where $\overline{v}:= \overline{\widehat{v}}|_{\overline{K}}$. Then there exists a common extension of $\overline{\widehat{v}}$ and $\overline{w}$ to $\overline{\widehat{K}}(X)$. Further, every such common extension restricts to the same valuation (up to identification of equivalent valuations) $\widehat{w}$ on $\widehat{K}(X)$.   
	\end{Theorem}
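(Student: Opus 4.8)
The plan is to prove the two assertions by rather different means: the existence of a common extension by amalgamating valuation rings, and the essential uniqueness of its restriction to $\widehat{K}(X)$ by a structural argument resting on the fact that the completion is an immediate extension. For existence, write $\mathcal{O}$, $\widehat{\mathcal{O}}$, $\mathcal{O}_{\overline{w}}$ for the valuation rings of $\overline{v}$ on $\overline{K}$, of $\overline{\widehat{v}}$ on $\overline{\widehat{K}}$, and of $\overline{w}$ on $\overline{K}(X)$, so that $\mathcal{O}=\widehat{\mathcal{O}}\cap\overline{K}=\mathcal{O}_{\overline{w}}\cap\overline{K}$. The inclusions of $\widehat{\mathcal{O}}$ and $\mathcal{O}_{\overline{w}}$ into $\overline{\widehat{K}}(X)$ agree on $\mathcal{O}$, hence induce a ring map $A:=\widehat{\mathcal{O}}\otimes_{\mathcal{O}}\mathcal{O}_{\overline{w}}\to\overline{\widehat{K}}(X)$; since torsion-free modules over the valuation ring $\mathcal{O}$ are flat, a short flatness argument (tensoring the inclusions $\widehat{\mathcal{O}}\hookrightarrow\overline{\widehat{K}}$ and $\mathcal{O}_{\overline{w}}\hookrightarrow\overline{K}(X)$ against flat modules) shows this map is injective, so $A$ is an integral domain whose fraction field, containing $\overline{\widehat{K}}$ and $\overline{K}(X)$, is their compositum $\overline{\widehat{K}}(X)$. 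Let $k$, $k_{1}$, $k_{2}$ be the residue fields of $\overline{v}$, $\overline{\widehat{v}}$, $\overline{w}$; since $\overline{K}$ is algebraically closed, so is $k$, whence $k_{1}\otimes_{k}k_{2}$ is a domain, and therefore the kernel $\mathfrak{q}$ of the reduction $A=\widehat{\mathcal{O}}\otimes_{\mathcal{O}}\mathcal{O}_{\overline{w}}\twoheadrightarrow k_{1}\otimes_{k}k_{2}$ is a prime ideal with $\mathfrak{q}\cap\widehat{\mathcal{O}}=\mathfrak{m}_{\overline{\widehat{v}}}$ and $\mathfrak{q}\cap\mathcal{O}_{\overline{w}}=\mathfrak{m}_{\overline{w}}$. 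By Chevalley's extension theorem $A_{\mathfrak{q}}$ is dominated by a valuation ring $\mathcal{O}_{W}$ of $\overline{\widehat{K}}(X)$; then $\mathcal{O}_{W}$ both contains and dominates $\widehat{\mathcal{O}}$ (resp. $\mathcal{O}_{\overline{w}}$), forcing $\mathcal{O}_{W}\cap\overline{\widehat{K}}=\widehat{\mathcal{O}}$ (resp. $\mathcal{O}_{W}\cap\overline{K}(X)=\mathcal{O}_{\overline{w}}$), so the associated valuation $W$ is a common extension.

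For uniqueness, let $W_{1},W_{2}$ be common extensions; one must show they induce the same valuation ring on $\widehat{K}(X)$. Since the completion is immediate, $\widehat{v}\widehat{K}=vK$ and $\widehat{K}\widehat{v}=Kv$, and passing to algebraic closures the inclusion $\overline{K}\subseteq\overline{\widehat{K}}$ identifies $\mathrm{gr}_{\overline{v}}\overline{K}$ with $\mathrm{gr}_{\overline{\widehat{v}}}\overline{\widehat{K}}$. Suppose first $\overline{w}$ is valuation transcendental; fix a minimal pair $(a,\gamma)$ of it with $a\in\overline{K}$, so that $\mathrm{gr}_{\overline{w}}\overline{K}(X)=\mathrm{gr}_{\overline{v}}\overline{K}\bigl[\mathrm{in}(X-a)\bigr]$ with $\mathrm{in}(X-a)$ free over $\mathrm{gr}_{\overline{v}}\overline{K}$. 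For $f\in\widehat{K}[X]$, expand $f=\sum_{i}c_{i}(X-a)^{i}$ with $c_{i}\in\widehat{K}(a)$; as $W_{j}|_{\overline{K}(X)}=\overline{w}$ gives $W_{j}(X-a)=\gamma$, we have $W_{j}(f)\ge\min_{i}\bigl(\overline{\widehat{v}}(c_{i})+i\gamma\bigr)$, and the leading form $\sum_{i\in I}\mathrm{in}(c_{i})\,\mathrm{in}(X-a)^{i}$ ($I$ the set of minimizing indices) lies in $\mathrm{gr}_{\overline{\widehat{v}}}\overline{\widehat{K}}\bigl[\mathrm{in}(X-a)\bigr]=\mathrm{gr}_{\overline{v}}\overline{K}\bigl[\mathrm{in}(X-a)\bigr]\subseteq\mathrm{gr}_{W_{j}}$, hence is nonzero by freeness. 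Thus $W_{j}(f)=\min_{i}\bigl(\overline{\widehat{v}}(c_{i})+i\gamma\bigr)$, a formula depending only on $(a,\gamma)$, so $W_{1}|_{\widehat{K}(X)}=W_{2}|_{\widehat{K}(X)}$.

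Suppose instead $\overline{w}$ is valuation algebraic, say the limit valuation of a pseudo-Cauchy sequence $(a_{s})$ in $K$ of transcendental type with no pseudo-limit in $\overline{K}$. If $(a_{s})$ still has transcendental type over $\widehat{K}$ with no pseudo-limit in $\overline{\widehat{K}}$, then by Kaplansky's theorem the limit valuation it determines on $\overline{\widehat{K}}(X)$ is unique, so $W_{1}=W_{2}$ outright. Otherwise $(a_{s})$ acquires a pseudo-limit $\widehat{a}$ over $\widehat{K}$ — in particular, if its breadth is all of $vK$ it is Cauchy and, $\widehat{K}$ being complete, $\widehat{a}\in\widehat{K}$ — and each $W_{j}$ becomes valuation transcendental with $W_{j}(X-\widehat{a})$ exceeding every $\overline{w}(X-a_{s})$; the minimal polynomial of $\widehat{a}$ over $\widehat{K}$ is then a common key polynomial for $W_{1}|_{\widehat{K}(X)}$ and $W_{2}|_{\widehat{K}(X)}$, and the resulting composite valuation ring on $\widehat{K}(X)$ (the pull-back of $\widehat{\mathcal{O}}$ along the associated place) is seen not to depend on the value $W_{j}(X-\widehat{a})$ nor on the pseudo-limit chosen inside the breadth ball. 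In all cases $W_{1}$ and $W_{2}$ restrict to equivalent valuations on $\widehat{K}(X)$, the common value $\widehat{w}$.

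The main obstacle I anticipate is exactly this last, valuation-algebraic, case: one has to verify that a pseudo-Cauchy sequence which becomes algebraic (or convergent) over $\widehat{K}$ forces on $\widehat{K}(X)$ a valuation that is genuinely independent of the way one continues over $\overline{\widehat{K}}$, and this is precisely where the key-polynomial machinery, together with the immediateness of the completion, is indispensable — whereas the valuation-transcendental case reduces to the elementary ``no cancellation in the graded ring'' computation above, and existence reduces to the amalgamation argument in the first paragraph.
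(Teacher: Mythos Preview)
Your existence argument via amalgamation of valuation rings is correct and genuinely different from the paper's. The paper never invokes Chevalley; instead it \emph{constructs} a common extension explicitly, taking a pair of definition $(a,\gamma)$ for $\overline{w}$ and setting $\overline{\widehat{w}}:=\overline{\widehat{v}}_{a,\gamma}$ in the valuation transcendental case, and in the valuation algebraic case building $\overline{\widehat{w}}$ from the pseudo-Cauchy sequence (either via Kaplansky or, when the sequence becomes of algebraic type over $\overline{\widehat{K}}$, via its unique limit $a\in\overline{\widehat{K}}$ and the pair $(a,(1,0))$ in $(\ZZ\oplus v\overline{K})_{\mathrm{lex}}$). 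Your abstract route is slicker for existence, but the paper's explicit construction is what drives uniqueness.

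In the valuation transcendental case your graded-ring computation is correct and is really the same fact the paper uses, phrased differently: both arguments amount to showing that any common extension $W_j$ satisfies $W_j|_{\widehat{K}(X)}=\overline{\widehat{v}}_{a,\gamma}|_{\widehat{K}(X)}$. (A minor slip: your pCs should be taken in $\overline{K}$, not in $K$.)

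The genuine gap is exactly where you flag it: the valuation algebraic type~II case. You assert that the valuation ring on $\widehat{K}(X)$ ``is seen not to depend on the value $W_j(X-\widehat{a})$ nor on the pseudo-limit chosen,'' but you do not prove it, and two preliminary facts are missing. First, one must show that if $(a_s)$ acquires a limit in $\overline{\widehat{K}}$ then it is actually a \emph{Cauchy} sequence in $(\overline{K},\overline{v})$, so that the limit $\widehat{a}\in\widehat{\overline{K}}$ is unique; the paper proves this directly (if $(\gamma_s)$ were bounded by some $\delta\in v\overline{K}$, an approximation $b\in\overline{K}$ with $\overline{\widehat{v}}(\widehat{a}-b)\ge\delta$ would be a limit in $\overline{K}$, contradicting transcendental type there). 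Second, one must show that for any $z\in\overline{\widehat{K}}\setminus\{\widehat{a}\}$ the value $W_j(X-z)=\overline{\widehat{v}}(\widehat{a}-z)$ is forced, so that $W_j(f)$ depends on $W_j(X-\widehat{a})$ only through the $\widehat{Q}_{\widehat{a}}$-adic order of $f$; then the comparison $W_j(f)\ge W_j(g)$ reduces to comparing those orders and a residual value in $v\overline{K}$, which is independent of $j$. The paper does not argue this way either: it instead proves (Propositions~\ref{Prop completion valn tr induced}, \ref{Prop valn tr induced unique pair two cases}, \ref{Prop valn alg induced}) that \emph{every} common extension, after normalising its value group to $(\ZZ\oplus v\overline{K})_{\mathrm{lex}}$, restricts on $\widehat{K}(X)$ to the explicitly constructed induced valuation, and then uniqueness is immediate. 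Your sketch could be completed along the lines above, but as written it stops at the assertion.
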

	It could be helpful to keep the following diagram in mind:
	\[  \begin{tikzcd}[row sep={50,between origins}, column sep={50,between origins}]
		& (K,v) \arrow[dl] \arrow[rr] \arrow[dd] & & (K(X),w) \arrow[dl] \arrow[dd] \\
		(\widehat{K},\widehat{v}) \arrow[rr, crossing over] \arrow[dd] & & (\widehat{K}(X),\widehat{w}) \\
		& (\overline{K},\overline{v}) \arrow[dl] \arrow[rr] & & (\overline{K}(X),\overline{w}) \arrow[dl] \\
		(\overline{\widehat{K}}, \overline{\widehat{v}}) \arrow[rr] & & (\overline{\widehat{K}}(X), \overline{\widehat{w}}) \arrow[from=uu, crossing over]\\
	\end{tikzcd}.  \]

	We will say that $(\widehat{K}(X)|\widehat{K},\widehat{w})$ is \textbf{induced by $(K(X)|K,w)$, $\overline{\widehat{v}}$ and $\overline{w}$}. When the valuations $\overline{\widehat{v}}$ and $\overline{w}$ are tacitly understood, we will simply say that $(\widehat{K}(X)|\widehat{K},\widehat{w})$ is induced by $(K(X)|K,w)$. The uniqueness of the assertion implies that we can talk of \emph{the} induced valuation once we fix the necessary extensions. In order to prove Theorem \ref{Thm unique common extn}, we first give a construction of the induced extension, defined in a unique manner. Then over the course of Proposition \ref{Prop completion valn tr induced}, Proposition \ref{Prop valn tr induced unique pair two cases} and Proposition \ref{Prop valn alg induced}, we will show that 
	\begin{itemize}
		\item given an extension $(\overline{\widehat{K}}(X)|K,v)$ of valued fields, the extension $(\widehat{K}(X)|\widehat{K},v)$ is induced by $(K(X)|K,v)$.
	\end{itemize} 
    During our preparation, we explore the strong connections of the induced extension $(\widehat{K}(X)|\widehat{K},\widehat{w})$ with the extension $(K(X)|K,w)$. Some of our primary findings are mentioned below:
	
	\subsection*{Case I: $(K(X)|K,w)$ is valuation transcendental} Then $(\widehat{K}(X)|\widehat{K},\widehat{w})$ is also valuation transcendental. Further, a minimal pair of definition for $w$ over $K$ also forms a minimal pair of definition for $\widehat{w}$ over $\widehat{K}$ (Corollary \ref{Coro CSKP valn tr}). We have the following subcases:
	
	\subsubsection*{Case IA: $vK$ is cofinal in $w K(X)$} In this case, a complete sequence of key polynomials for $w$ over $K$ forms a complete sequence of key polynomials for $\widehat{w}$ over $\widehat{K}$ (Theorem \ref{Thm CSKP valn tr cofinal}). Moreover, the extension $(\widehat{K}(X)|K(X),\widehat{w})$ is immediate (Corollary \ref{Coro CSKP valn tr}).
	
	\subsubsection*{Case IB: $(K(X)|K,w)$ admits a unique pair of definition $(a,\g)$} Take the minimal polynomials $Q_a$ and $\widehat{Q}_a$ of $a$ over $K$ and $\widehat{K}$ respectively. Then a complete sequence of key polynomials for $\widehat{w}$ over $\widehat{K}$ is obtained by taking an initial segment of a complete sequence of key polynomials for $w$ over $K$, and replacing $Q_a$ by $\widehat{Q}_a$ (Theorem \ref{Thm CSKP unique pair}).   
	
	\subsection*{Case II: $(K(X)|K,w)$ is valuation algebraic} There are again two subcases:
	
	\subsubsection*{Case IIA: $(\widehat{K}(X)|\widehat{K},\widehat{w})$ is valuation algebraic} In this case, a complete sequence of key polynomials for $w$ over $K$ forms a complete sequence of key polynomials for $\widehat{w}$ over $\widehat{K}$. Further, $(\widehat{K}(X)|K(X),\widehat{w})$ is immediate (Theorem \ref{Thm CSKP valn alg}). In this case, we will say that $(K(X)|K,w)$ is \textbf{valuation algebraic of type I}.
	
	\subsubsection*{Case IIB: $(\widehat{K}(X)|\widehat{K},\widehat{w})$ is valuation transcendental} In this case, $(\widehat{K}(X)|\widehat{K},\widehat{w})$ has a unique pair of definition $(a,\g)$ where $a\in\overline{\widehat{K}}\setminus\overline{K}$. A complete sequence of key polynomials for $\widehat{w}$ over $\widehat{K}$ is obtained by taking an initial segment of a complete sequence of key polynomials for $w$ over $K$, and adjoining $\widehat{Q}_a$. The element $a$ is \emph{uniquely} determined in the following way: take sequences $\{\g_\nu\}_{\nu<\l} \subseteq \overline{w}(X-\overline{K})$ and $\{a_\nu\}_{\nu<\l} \subseteq \overline{K}$ such that $\{\g_\nu\}_{\nu<\l}$ is cofinal in $\overline{w}(X-\overline{K})$ and $\overline{w}(X-a_\nu) = \g_\nu$ for all $\nu<\l$. Then $\{a_\nu\}_{\nu<\l}$ is a Cauchy sequence in $(\overline{K},\overline{v})$ and has $a$ as the unique limit in $\overline{\widehat{K}}$ (Theorem \ref{Thm CSKP valn alg}). In this case, we will say that $(K(X)|K,w)$ is \textbf{valuation algebraic of type II}.

	\pars Observe that the induced valuation $(\widehat{K}(X)|\widehat{K},\widehat{w})$ is determined uniquely once we fix $\overline{\widehat{v}}$ and a common extension $\overline{w}$ of $w$ and $\overline{v}$ to $\overline{K}(X)$. It is a natural question to inquire about the connections between two induced valuations corresponding to the choice of two distinct common extensions. This has been explored in the next result:
	
	\begin{Theorem}\label{Thm connection between distinct induced valns}
		Let notations and assumptions be as in Theorem \ref{Thm unique common extn}. Take two common extension $\overline{w}$ and $\overline{w}_1$ of $w$ and $\overline{v}$ to $\overline{K}(X)$. Consider the corresponding induced extensions $(\widehat{K}(X)|\widehat{K},\widehat{w})$ and $(\widehat{K}(X)|\widehat{K},\widehat{w}_1)$. Then,
		\sn (i) $(\widehat{K}(X)|\widehat{K},\widehat{w})$ is valuation algebraic if and only if $(\widehat{K}(X)|\widehat{K},\widehat{w}_1)$ is valuation algebraic,
		\n (ii) $(\widehat{K}(X)|\widehat{K},\widehat{w})$ is valuation transcendental if and only if $(\widehat{K}(X)|\widehat{K},\widehat{w}_1)$ is valuation transcendental.\\
		When the induced extensions are valuation transcendental, then we can choose corresponding minimal pairs of definition $(a,\g)$ and $(a_1,\g)$ over $\widehat{K}$ such that $a$ and $a_1$ are conjugates over $\widehat{K}$. Further, if $a, a_1 \in \overline{K}$, then they are also conjugates over $K$.
	\end{Theorem}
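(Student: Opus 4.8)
The plan is to prove the apparently stronger statement that $\widehat{w}$ and $\widehat{w}_1$ coincide up to equivalence; parts (i) and (ii) are then immediate, and in the last assertion one simply takes $a_1=a$. The starting point is conjugacy of valuations in a normal extension: since $\overline{K}|K$ is normal, so is the algebraic extension $\overline{K}(X)=\overline{K}\cdot K(X)$ over $K(X)$, and both $\overline{w}$ and $\overline{w}_1$ restrict to $w$ on $K(X)$, so there is $\sigma\in\Aut(\overline{K}(X)|K(X))$ with $\overline{w}_1=\overline{w}\circ\sigma$. Such a $\sigma$ fixes $X$, hence is the unique extension by $X\mapsto X$ of $\sigma|_{\overline{K}}\in\Aut(\overline{K}|K)$; restricting the identity $\overline{w}_1=\overline{w}\circ\sigma$ to $\overline{K}$ and using $\overline{w}|_{\overline{K}}=\overline{w}_1|_{\overline{K}}=\overline{v}$ forces $\overline{v}=\overline{v}\circ\sigma|_{\overline{K}}$, so $\sigma|_{\overline{K}}$ lies in the decomposition group of $\overline{v}$ over $v$.

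Next I would show that $\sigma|_{\overline{K}}$ fixes $K_1:=\overline{K}\cap\widehat{K}$ pointwise: given $x\in K_1$, choose $c_n\in K$ with $\overline{\widehat{v}}(x-c_n)\to\infty$; since $\sigma|_{\overline{K}}$ preserves $\overline{v}=\overline{\widehat{v}}|_{\overline{K}}$ and fixes each $c_n$, also $\overline{\widehat{v}}(\sigma(x)-c_n)=\overline{\widehat{v}}(\sigma(x-c_n))\to\infty$, so $(c_n)$ tends to both $x$ and $\sigma(x)$ in $(\overline{\widehat{K}},\overline{\widehat{v}})$ and therefore $\sigma(x)=x$. (Equivalently, $K_1$ is a Henselization of $(K,v)$, namely the decomposition field of $\overline{v}$, which is fixed pointwise by the decomposition group.) Since $K_1$ is relatively algebraically closed in $\widehat{K}$, one has $\widehat{K}\cap K_1^{\sep}=K_1$, so the restriction map $\Gal(\widehat{K}^{\sep}|\widehat{K})\to\Gal(K_1^{\sep}|K_1)$ is surjective; lifting $\sigma|_{K_1^{\sep}}\in\Gal(K_1^{\sep}|K_1)$ along this surjection and then extending uniquely over the purely inseparable part $\overline{\widehat{K}}|\widehat{K}^{\sep}$ yields $\widehat{\sigma}\in\Aut(\overline{\widehat{K}}|\widehat{K})$ with $\widehat{\sigma}|_{\overline{K}}=\sigma|_{\overline{K}}$ (the extension over the inseparable part being forced, as purely inseparable extensions carry no nontrivial automorphisms). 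Finally, $\widehat{K}$ is Henselian, so $\widehat{v}$ has a unique extension to $\overline{\widehat{K}}$; as $\overline{\widehat{v}}\circ\widehat{\sigma}$ is another one, $\widehat{\sigma}$ preserves $\overline{\widehat{v}}$.

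To conclude, extend $\widehat{\sigma}$ to $\widetilde{\sigma}\in\Aut(\overline{\widehat{K}}(X)|\widehat{K}(X))$ by $\widetilde{\sigma}(X)=X$; then $\widetilde{\sigma}$ fixes $\widehat{K}(X)$ pointwise and restricts to $\sigma$ on $\overline{K}(X)$. Fixing a common extension $\overline{\widehat{w}}$ of $\overline{\widehat{v}}$ and $\overline{w}$ to $\overline{\widehat{K}}(X)$, the valuation $\overline{\widehat{w}}\circ\widetilde{\sigma}$ restricts to $\overline{\widehat{v}}\circ\widehat{\sigma}=\overline{\widehat{v}}$ on $\overline{\widehat{K}}$ and to $\overline{w}\circ\sigma=\overline{w}_1$ on $\overline{K}(X)$, so it is a common extension of $\overline{\widehat{v}}$ and $\overline{w}_1$; by the uniqueness clause of Theorem \ref{Thm unique common extn} its restriction to $\widehat{K}(X)$ is $\widehat{w}_1$. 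But that restriction is just $\overline{\widehat{w}}|_{\widehat{K}(X)}=\widehat{w}$, since $\widetilde{\sigma}$ fixes $\widehat{K}(X)$. Hence $\widehat{w}_1=\widehat{w}$; in particular (i) and (ii) hold, and since a valuation transcendental induced valuation has a minimal pair of definition over $\widehat{K}$ by Corollary \ref{Coro CSKP valn tr} and Theorem \ref{Thm CSKP valn alg}, any such pair $(a,\g)$ serves simultaneously for $\widehat{w}$ and $\widehat{w}_1$, with $a$ trivially conjugate to itself, and over $K$ as well if $a\in\overline{K}$.

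The step I expect to be the real obstacle is the construction of $\widehat{\sigma}$: one needs an automorphism of $\overline{\widehat{K}}$ over $\widehat{K}$ that also extends the prescribed $\overline{K}$-automorphism $\sigma|_{\overline{K}}$, which is possible only because of the compatibility $\sigma|_{\overline{K}}|_{K_1}=\mathrm{id}$ obtained from the density argument, and which requires separating off and handling the purely inseparable part of $\overline{K}|K$ when the residue characteristic is positive. If one prefers to prove only the stated (weaker) form while avoiding the choice of a fixed $\overline{\widehat{w}}$, one can instead transport the data of Theorem \ref{Thm CSKP valn alg} directly: the cofinal sequence $\{\g_\nu\}$ and the $\overline{w}$-approximants $\{a_\nu\}\subseteq\overline{K}$ become $\{\g_\nu\}$ and $\{\widehat{\sigma}^{-1}(a_\nu)\}$ for $\overline{w}_1$, and since $\widehat{\sigma}^{-1}$ is continuous their limits satisfy $a_1=\widehat{\sigma}^{-1}(a)$, which exhibits $a$ and $a_1$ as conjugates over $\widehat{K}$, and over $K$ whenever both lie in $\overline{K}$.
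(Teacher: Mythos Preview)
Your overall strategy---reducing everything to the single claim $\widehat{w}=\widehat{w}_1$ via an automorphism $\widetilde{\sigma}$ of $\overline{\widehat{K}}(X)$ fixing $\widehat{K}(X)$---is sound and, once completed, actually yields more than the theorem states. There is, however, a genuine gap in the construction of $\widehat{\sigma}$: you assert that $\widehat{K}$ is henselian in order to force $\overline{\widehat{v}}\circ\widehat{\sigma}=\overline{\widehat{v}}$, but the completion of a valued field of rank greater than one need not be henselian (the result the paper cites from Warner only says that the completion of an \emph{already henselian} field is again henselian). Your Galois-theoretic lift of $\sigma|_{\overline{K}}$ through the surjection $\Gal(\overline{\widehat{K}}|\widehat{K})\twoheadrightarrow\Gal(\overline{K}|K_1)$ therefore exists, but nothing forces it to preserve $\overline{\widehat{v}}$. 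The parenthetical identification of $K_1=\overline{K}\cap\widehat{K}$ with a henselization of $K$ is likewise unjustified without a rank hypothesis.

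The repair is to build $\widehat{\sigma}$ exactly as the paper does: since $\sigma\in\Gal(\overline{K}|K)$ satisfies $\overline{v}\circ\sigma=\overline{v}$, it is an isometry of $(\overline{K},\overline{v})$ and extends by continuity to a valuation-preserving automorphism of $\widehat{\overline{K}}$ over $\widehat{K}$; restricting this to $\overline{\widehat{K}}\subseteq\widehat{\overline{K}}$ gives a $\widehat{\sigma}\in\Aut(\overline{\widehat{K}}|\widehat{K})$ with $\widehat{\sigma}|_{\overline{K}}=\sigma$ and $\overline{\widehat{v}}\circ\widehat{\sigma}=\overline{\widehat{v}}$, no henselianity needed. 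With this $\widehat{\sigma}$ your remaining steps go through verbatim: $\overline{\widehat{w}}\circ\widetilde{\sigma}$ is a common extension of $\overline{\widehat{v}}$ and $\overline{w}_1$, so by Theorem~\ref{Thm unique common extn} its restriction to $\widehat{K}(X)$ is $\widehat{w}_1$, while that restriction is manifestly $\widehat{w}$.

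By contrast, the paper also produces $\widehat{\sigma}$ by continuous extension but does not observe that $\widehat{w}=\widehat{w}_1$; instead it runs a three-case analysis (transcendental with $a\in\overline{K}$; transcendental with $a\in\overline{\widehat{K}}\setminus\overline{K}$; algebraic), explicitly transporting minimal pairs or pseudo-Cauchy sequences by $\widehat{\sigma}$ to verify preservation of type and the conjugacy assertions. Once your construction of $\widehat{\sigma}$ is fixed, your argument is shorter and proves the stronger fact that the two induced valuations coincide up to equivalence; the paper's approach, on the other hand, makes the conjugacy $a_1=\widehat{\sigma}a$ explicit and transparent in each case.
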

	
	\pars We can employ our findings to obtain strong ramification theoretic properties of the completion $\widehat{K}$:

	\begin{Theorem}\label{Thm K^c properties}
		Let $(K,v)$ be a henselian valued field and fix an extension of $v$ to $\overline{\widehat{K}}$. Then the following statements hold true:
		\sn (i) $(K,v)$ is separably defectless $\Longleftrightarrow$ $(\widehat{K},v)$ is separably defectless $\Longleftrightarrow$ $(\widehat{K},v)$ is defectless. 
		\n (ii) $(K,v)$ is separably tame $\Longleftrightarrow$ $(\widehat{K},v)$ is separably tame $\Longleftrightarrow$ $(\widehat{K},v)$ is tame.
		\n (iii) $(K,v)$ is separable-algebraically maximal $\Longleftrightarrow$ $(\widehat{K},v)$ is separable-algebraically maximal.
		\n (iv) $K$ is separable-algebraically closed $\Longleftrightarrow$ $\widehat{K}$ is separable-algebraically closed $\Longleftrightarrow$ $\widehat{K}$ is algebraically closed.
		\n (v) $(K,v)$ is algebraically maximal $\Longrightarrow$ $(\widehat{K},v)$ is algebraically maximal. However, the converse is not true.
	\end{Theorem}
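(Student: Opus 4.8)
The plan is to isolate the real content in item (iii) and to derive (ii), (iv) and (v) from it together with standard facts about the completion. First I would record that $\widehat{K}$ is again henselian, that $v\widehat{K}=vK$ and $\widehat{K}v=Kv$, and that $(\widehat{K}|K,v)$ is immediate, and recall the henselian dictionary: separably defectless $\Longleftrightarrow$ separable-algebraically maximal, defectless $\Longleftrightarrow$ algebraically maximal, and (separably) tame $\Longleftrightarrow$ (separably) defectless together with $vK$ being $p$-divisible and $Kv$ perfect, where $p=\ch Kv$. Since $vK$ and $Kv$ are shared by $K$ and $\widehat{K}$, the $p$-divisibility and perfectness clauses transfer for free, so (ii) is a formal consequence of (i); the first equivalence in (i) is precisely (iii); and the remaining equivalence in (i) reduces to the fact that the completion of a separably defectless field is defectless (equivalently, that separable defectlessness of $K$ forces $\widehat{K}\,|\,\widehat{K}^{p}$ to be defectless — lift a valuation-independent $p$-basis and use that $p$-independence is tested on finite sums), which also drops out of the key-polynomial description of $(\widehat{K}(X)|\widehat{K},\widehat{w})$ established above. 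Thus everything hinges on (iii).

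For (iii), the direction ``$\widehat{K}$ separable-algebraically maximal $\Rightarrow (K,v)$ separable-algebraically maximal'' I would handle by composing a hypothetical proper immediate separable-algebraic $L=K(\theta)$ with $\widehat{K}$: the value group and residue field of $L\widehat{K}=\widehat{K}(\theta)$ cannot grow, so if $[L\widehat{K}:\widehat{K}]>1$ we are done, and the residual case $L\widehat{K}=\widehat{K}$ is dealt with by the same machinery as the converse. For the converse, suppose $\widehat{K}(a)|\widehat{K}$ is a proper immediate separable-algebraic extension, of defect $n>1$. If $a\in\overline{K}$, then $K(a)|K$ is separable with completion $\widehat{K}(a)$, so it has the same ramification index and residue degree as $\widehat{K}(a)|\widehat{K}$, whence $[K(a):K]=d(K(a)|K)\ge n>1$ and $(K,v)$ is not separably defectless. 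So assume $a\notin\overline{K}$. Since $\widehat{K}(a)|\widehat{K}$ is immediate and $\overline{K}$ is dense in $\overline{\widehat{K}}$, $a$ is a pseudo-limit, of proper breadth, of a pseudo-Cauchy sequence $\{a_\nu\}\subseteq\overline{K}$ with no pseudo-limit in $\overline{K}$; choosing $\gamma$ value-transcendental over $vK$ and larger than this breadth makes $(a,\gamma)$ a minimal pair of definition over $\widehat{K}$, and the valuation $\widehat{w}=w_{a,\gamma}$ on $\widehat{K}(X)$, after being extended to $\overline{\widehat{K}}(X)$ and restricted to $K(X)$ and $\overline{K}(X)$ as in Theorem~\ref{Thm unique common extn}, restricts to a valuation algebraic $w$ on $K(X)$ with $(\widehat{K}(X)|\widehat{K},\widehat{w})$ induced by $(K(X)|K,w)$. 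Hence $(K(X)|K,w)$ is valuation algebraic of type II and a complete sequence of key polynomials for $\widehat{w}$ over $\widehat{K}$ is obtained from an initial segment of one for $w$ over $K$ together with $\widehat{Q}_a$ (Theorem~\ref{Thm CSKP valn alg}).

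The decisive step is then to compare implicit constant fields. On the one hand, since $\gamma$ is value-transcendental over $v\widehat{K}$ and $\widehat{K}(a)|\widehat{K}$ is immediate, $IC(\widehat{K}(X)|\widehat{K},\widehat{w})$ contains a henselization of $\widehat{K}(a)$ and is itself a proper immediate separable-algebraic extension of $\widehat{K}$. On the other hand, the Case~II results relate $IC(\widehat{K}(X)|\widehat{K},\widehat{w})$ to $IC(K(X)|K,w)$ through the shared initial segment of key polynomials; tracing this comparison, together with the formulas for implicit constant fields in terms of key polynomials from the cited work, one concludes that $IC(K(X)|K,w)$ is a non-trivial immediate separable-algebraic extension of $K$, so $(K,v)$ is not separable-algebraically maximal. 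I expect this comparison — propagating non-triviality, immediateness and separability back from $\widehat{K}$ to $K$ through the key-polynomial dictionary — to be the main obstacle; it is exactly the place where the structural results of the paper (and the implicit-constant-field technology) carry the argument, and where one must be careful that the value-transcendental choice of $\gamma$ indeed pins down the minimal pair and that the initial segment of key polynomials transferred to $\widehat{K}$ is a proper one.

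Granting (iii), the rest is routine. For (iv): ``$K$ separable-algebraically closed $\Rightarrow\widehat{K}$ separable-algebraically closed'' follows from Krasner's lemma (approximate a separable polynomial over $\widehat{K}$ by one over $K$, whose simple roots lie in $K\subseteq\widehat{K}$, and take a root close enough to a given root of the original); the converse follows because $\widehat{K}$ is then trivially separable-algebraically maximal, hence so is $(K,v)$ by (iii), so a root in $\widehat{K}$ of a separable polynomial over $K$ generates an immediate separable extension of $K$ and thus lies in $K$; and ``$\widehat{K}$ separable-algebraically closed $\Rightarrow\widehat{K}$ algebraically closed'' reduces in characteristic $p$ to perfectness of $\widehat{K}$: since $K$ is then separable-algebraically closed, one checks (using that $Y^{p}+\epsilon Y-c$ is separable for $v\epsilon>0$, and Artin--Schreier polynomials for divisibility of values) that $Kv=\widehat{K}v$ is perfect and $vK=v\widehat{K}$ is divisible, so $\widehat{K}$, being henselian with divisible value group and algebraically closed residue field, is algebraically closed. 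For (v), ``$\Rightarrow$'': algebraic maximality of $K$ gives separable-algebraic maximality of $\widehat{K}$ by (iii), and defectlessness of $K$ gives defectlessness of $K\,|\,K^{p}$ (Frobenius), hence a valuation-independent $p$-basis surviving into $\widehat{K}$, so $\widehat{K}$ is inseparably defectless; as any immediate algebraic extension of $\widehat{K}$ has an immediate — hence trivial — maximal separable subextension and a remaining purely inseparable part that is then immediate and defectless, hence trivial, $\widehat{K}$ is algebraically maximal. The converse fails: $K=\mathbb{F}_{p}(t)^{h}$ is henselian but not algebraically maximal (it carries proper immediate Artin--Schreier extensions), while $\widehat{K}=\mathbb{F}_{p}((t))$ is complete and discretely valued, hence maximal and a fortiori algebraically maximal.
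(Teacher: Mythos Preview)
Your proposal has genuine gaps at several points.

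\textbf{The ``henselian dictionary'' is not valid.} You assert that for henselian valued fields one has separably defectless $\Longleftrightarrow$ separable-algebraically maximal and defectless $\Longleftrightarrow$ algebraically maximal, and then reduce (i) to (iii). The forward implications are trivial, but the converses are neither standard nor true in general: the defect of a finite separable extension $L|K$ need not be witnessed by an immediate subextension of $K$ itself (multiplicativity of defect in a tower only produces an immediate step over some intermediate field). The paper does \emph{not} make this reduction. Instead it proves a single preparatory lemma (Lemma~\ref{Lemma d(K(a)|K) = d(K^c(a)|K^c)}): for $a\in K^{\sep}$ one has $[K(a):K]=[\widehat{K}(a):\widehat{K}]$ and $d(K(a)|K,v)=d(\widehat{K}(a)|\widehat{K},v)$, and \emph{every} $b$ separable over $\widehat{K}$ satisfies $\widehat{K}(b)=\widehat{K}(a)$ for some $a\in K^{\sep}$. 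The proof is short: choose $\gamma\in v\overline{K}$ with $\gamma>\kras(b,\widehat{K})$, so that $(b,\gamma)$ is a minimal pair over $\widehat{K}$; Proposition~\ref{Prop completion valn tr induced} then supplies $a\in K^{\sep}$ with $(a,\gamma)$ a common minimal pair, and Krasner gives $\widehat{K}(a)=\widehat{K}(b)$. From this lemma, (i), (iii) and the first equivalence of (iv) follow in a couple of lines, and the second equivalence in (i) is quoted from \cite{K3}. Your separate treatment of the case $a\notin\overline{K}$ via valuation-algebraic extensions of type~II and implicit constant fields is therefore unnecessary --- that case never occurs once the lemma is in hand --- and you yourself identify the required comparison of implicit constant fields as ``the main obstacle'' without carrying it out.

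\textbf{Your counterexample for (v) is wrong.} The field $\FF_p(t)^h$ is the henselization of a global function field and is defectless, hence algebraically maximal; it carries no proper immediate Artin--Schreier extensions. (For the purely inseparable side, $\{t\}$ is a $p$-basis of both $\FF_p(t)^h$ and $\FF_p((t))$, whence $\FF_p(t)^h\cap\FF_p((t))^p=(\FF_p(t)^h)^p$.) The paper's counterexample is finer: take $\zeta\in\FF_p((t))$ transcendental over $\FF_p(t)$ and set $K:=\FF_p(t,\zeta^p)^h$; then $(K(\zeta)|K,v)$ is an immediate purely inseparable extension of degree $p$, while $\widehat{K}=\FF_p((t))$ is maximal. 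Likewise, the forward direction of (v) in the paper does not pass through defectlessness: one assumes $\widehat{K}$ has an immediate purely inseparable extension $\widehat{K}(\eta)$ of degree $p$, approximates $\eta$ via continuity of roots by $\eta'$ purely inseparable of degree $p$ over $K$, and then uses results on distances from \cite{K3} to conclude that $(K(\eta')|K,v)$ is immediate, contradicting algebraic maximality of $K$.
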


	\pars Given an algebraic extension $(L|K,v)$ of valued fields, it is well-known that $\widehat{K}$ is contained in $\widehat{L}$. However, this containment fails to hold when the  extension is transcendental. If this containment was to hold true in the simplest possible case, that is when $L=K(X)$, then we would necessarily have the chain $K(X) \subseteq \widehat{K}(X) \subseteq \widehat{K(X)}$. As a consequence, $K(X)$ would lie dense in $\widehat{K}(X)$. In Section \ref{Sect density} we obtain a necessary and sufficient condition for this to hold. We first illustrate the failure of $\widehat{K}$ to be contained in $\widehat{K(X)}$ whenever $(K(X)|K,v)$ is either value transcendental with a unique pair of definition or valuation algebraic of type II (Proposition \ref{Prop not density of K(X)}). We then obtain the following result in the complimentary case:  
	
	\begin{Theorem} \label{Thm density of K(X)}
		Let $(\overline{\widehat{K}}(X)|K,v)$ be an extension of valued fields. Assume that either $(K(X)|K,v)$ is valuation transcendental such that $vK$ is cofinal in $vK(X)$, or that $(K(X)|K,v)$ is valuation algebraic of type I. Take $\a\in v\widehat{K}(X)$ and polynomials $f,g \in \widehat{K}[X]$. Then there exist polynomials $f^\prime, g^\prime \in K[X]$ such that the following conditions are satisfied:
		\begin{align*}
			\deg f = \deg f^\prime, \, \, \deg g = \deg g^\prime,& \, \, vf  = vf^\prime, \, \, vg  = v g^\prime,\\
			v(f-f^\prime) > \a,  \, \,  v(g-g^\prime) > \a, & \, \, v(\frac{f}{g} - \frac{f^\prime}{g^\prime})  > \a.
		\end{align*}
	\end{Theorem}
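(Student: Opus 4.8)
The plan is to reduce the statement to two facts coming from the analysis of Cases IA and IIA. First, in both cases the extension $(\widehat{K}(X)|K(X),v)$ is immediate (Corollary \ref{Coro CSKP valn tr} and Theorem \ref{Thm CSKP valn alg}), so in particular $v\widehat{K}(X)=vK(X)$. Second, $vK$ is cofinal in $v\widehat{K}(X)$: if $(K(X)|K,v)$ is valuation transcendental with $vK$ cofinal in $vK(X)$ this is the hypothesis together with $vK(X)=v\widehat{K}(X)$; if $(K(X)|K,v)$ is valuation algebraic of type I, then $vK(X)/vK$ is a torsion group, so for every $\d\in vK(X)$ with $\d>0$ there is an integer $n\geq 1$ with $n\d\in vK$ and $n\d\geq\d$, whence $vK$ is cofinal in $vK(X)=v\widehat{K}(X)$. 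This cofinality is the only place where the hypothesis on $(K(X)|K,v)$ is used, and it is precisely what fails in the complementary cases of Proposition \ref{Prop not density of K(X)}; it is therefore the essential point of the argument, the rest being a coefficientwise approximation in the ultrametric.

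Granting this, write $f=\sum_{i=0}^{m}c_iX^i$ and $g=\sum_{j=0}^{n}d_jX^j$ with $c_m\neq 0$ and $d_n\neq 0$. Using cofinality I would fix an element $N\in vK$ strictly larger than each of the finitely many elements $\a$, $vf$, $vg$, $vc_m+m\,vX$, $vd_n+n\,vX$ and $\a+2vg-\min\{vf,vg\}$ of $v\widehat{K}(X)$. For every index $i$ with $c_i\neq 0$, cofinality yields some $\d\in vK$ with $\d>N-i\,vX$, and density of $K$ in $\widehat{K}$ yields $c_i'\in K$ with $v(c_i-c_i')>\d>N-i\,vX$; set $c_i'=0$ whenever $c_i=0$, and define $d_j'\in K$ analogously. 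Put $f'=\sum_{i=0}^{m}c_i'X^i$ and $g'=\sum_{j=0}^{n}d_j'X^j$.

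It then remains to verify the listed properties. By the ultrametric inequality $v(f-f')\geq\min_i\bigl(v(c_i-c_i')+i\,vX\bigr)>N$, and likewise $v(g-g')>N$. Since $v(c_m-c_m')>N-m\,vX>vc_m$ we get $c_m'\neq 0$, so $\deg f'=m=\deg f$, and similarly $\deg g'=n=\deg g$. As $N>vf$ we get $vf'=v\bigl(f-(f-f')\bigr)=vf$ and likewise $vg'=vg$, while $N>\a$ gives $v(f-f')>\a$ and $v(g-g')>\a$. Finally, the identity $fg'-f'g=f(g'-g)+(f-f')g$ gives $v(fg'-f'g)>N+\min\{vf,vg\}$, hence
\[ v\Bigl(\frac{f}{g}-\frac{f'}{g'}\Bigr)=v(fg'-f'g)-vg-vg'=v(fg'-f'g)-2vg>N+\min\{vf,vg\}-2vg>\a \]
by the choice of $N$, which is the last required inequality. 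As noted, the only real obstacle is securing the cofinality of $vK$ in $v\widehat{K}(X)$; once that is available, the approximation and the estimates above are routine.
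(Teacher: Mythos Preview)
Your argument is correct, and it is genuinely simpler than the paper's own proof. Both proofs rest on the same two preliminary facts you isolate: that $(\widehat{K}(X)|K(X),v)$ is immediate in either case, and that $vK$ is cofinal in $v\widehat{K}(X)$. From there, however, the paper takes a heavier route. In the valuation transcendental case it expands $f$ and $g$ in powers of $(X-a)$ for a pair of definition $(a,\gamma)$, so that the valuation of a polynomial is computed \emph{exactly} as a minimum over the expansion; this forces it to track the Hasse-derivative coefficients and to invoke the continuity-of-roots theorem to secure $vf=vf'$. In the valuation algebraic case it then reduces back to the transcendental situation via the approximating monomial valuations $v_\nu=v_{a_\nu,\gamma_\nu}$, using that $v\geq v_\nu$ on $\widehat{K}[X]$ with equality for large $\nu$.

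Your approach sidesteps all of this by expanding in powers of $X$ and using only the ultrametric \emph{inequality} $v\bigl(\sum e_iX^i\bigr)\geq\min_i(ve_i+i\,vX)$, which is all that is needed. The equality $vf=vf'$ then drops out of $v(f-f')>N>vf$ without any appeal to root continuity, and the identity $fg'-f'g=f(g'-g)+(f-f')g$ replaces the paper's direct computation of the product expansion. The payoff is a uniform, one-paragraph argument covering both cases once cofinality is in hand; the paper's approach, by contrast, keeps closer contact with the structures (pairs of definition, key polynomials) used elsewhere in the article, but for this particular statement that machinery is not required. One triviality you may want to note explicitly: if $v$ is trivial on $K$ then $\widehat{K}=K$ and there is nothing to prove, so one may assume $vK\neq 0$ when selecting $N$ strictly above the given finite set.
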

	Note that we actually obtain the stronger result that the polynomial ring $K[X]$ lies dense in the polynomial ring $\widehat{K}[X]$ under the assumptions of Theorem \ref{Thm density of K(X)}.

	\pars Finally, given an extension of valued fields $(\overline{\widehat{K}(X)}|K,v)$, we study the connections between the implicit constant fields $IC(\widehat{K}(X)|\widehat{K},v)$ and $IC(K(X)|K,v)$. The implicit constant field of the extension $(K(X)|K,v)$ is defined to be the relative algebraic closure of $K$ in the henselization $K(X)^h$ of $K(X)$. It follows directly from the definition that $\widehat{K}^h.IC(K(X)|K,v) \subseteq IC(\widehat{K}(X)|\widehat{K},v)$, where $\widehat{K}^h$ denotes the henselization of $\widehat{K}$. We obtain the following result in Section \ref{Sect imp cnst fields}: 
	\begin{Proposition}\label{Prop IC of K and K hat}
		Let $(\overline{\widehat{K}(X)}|K,v)$ be an extension of valued fields. Assume that $(K(X)|K,v)$ is not valuation algebraic of type II. Then,
		\[ \widehat{K}^h. IC(K(X)|K,v) = IC(\widehat{K}(X)|\widehat{K},v).  \]
	\end{Proposition}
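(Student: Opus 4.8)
The inclusion $\widehat{K}^h\cdot IC(K(X)|K,v)\subseteq IC(\widehat{K}(X)|\widehat{K},v)$ is immediate from the definitions, as recorded above, so the whole content of the proposition is the reverse inclusion
$$IC(\widehat{K}(X)|\widehat{K},v)\ \subseteq\ \widehat{K}^h\cdot IC(K(X)|K,v).$$
Write $w:=v|_{K(X)}$ and $\widehat{w}:=v|_{\widehat{K}(X)}$, so that $(\widehat{K}(X)|\widehat{K},\widehat{w})$ is induced by $(K(X)|K,w)$ in the sense of Theorem~\ref{Thm unique common extn}. Since $(K(X)|K,w)$ is not valuation algebraic of type II, it is covered by Case IA, Case IB or Case IIA of the introduction, and in each of these the data needed to compute the implicit constant field --- a complete sequence of key polynomials, together with a minimal pair of definition in the valuation-transcendental situation --- transports essentially unchanged from $K$ to its completion $\widehat{K}$. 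The plan is to feed this into the description of $IC$ in terms of key polynomials and minimal pairs developed earlier in this section.

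Consider first the two ``immediate'' cases IA and IIA. Here Corollary~\ref{Coro CSKP valn tr}, Theorem~\ref{Thm CSKP valn tr cofinal} and Theorem~\ref{Thm CSKP valn alg} give that one and the same complete sequence of key polynomials $\{Q_i\}$ for $w$ over $K$ is also a complete sequence of key polynomials for $\widehat{w}$ over $\widehat{K}$, and that $(\widehat{K}(X)|K(X),\widehat{w})$ is immediate; moreover Theorem~\ref{Thm density of K(X)} shows that $K(X)$ lies dense in $\widehat{K}(X)$, whence $\widehat{K}\subseteq\widehat{K}(X)\subseteq\widehat{K(X)}$ and (with compatible henselizations inside $\overline{\widehat{K}(X)}$) $K(X)^h$ lies dense in $\widehat{K}(X)^h$. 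Now take $\theta\in IC(\widehat{K}(X)|\widehat{K},v)$, i.e.\ $\theta\in\widehat{K}(X)^h$ separable-algebraic over $\widehat{K}$. Approximating $\theta$ by some $\theta'\in K(X)^h$ closely enough and applying Krasner's lemma over the henselian field $\widehat{K}^h$ yields $\widehat{K}^h(\theta)\subseteq\widehat{K}^h(\theta')\subseteq\widehat{K}^h\cdot K(X)^h$; the description of $IC$ via the now-common complete sequence of key polynomials then identifies the relative algebraic closure of $\widehat{K}^h$ in $\widehat{K}^h\cdot K(X)^h$ with $\widehat{K}^h\cdot\bigl(\overline{K}\cap K(X)^h\bigr)=\widehat{K}^h\cdot IC(K(X)|K,v)$, which therefore contains $\theta$.

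In Case IB, $(K(X)|K,w)$ has a unique pair of definition $(a,\g)$ with $a\in\overline{K}$; by Corollary~\ref{Coro CSKP valn tr} this same pair $(a,\g)$ is a minimal pair of definition for $\widehat{w}$ over $\widehat{K}$, and by Theorem~\ref{Thm CSKP unique pair} a complete sequence of key polynomials for $\widehat{w}$ over $\widehat{K}$ is obtained from an initial segment of one for $w$ over $K$ by replacing the minimal polynomial $Q_a$ of $a$ over $K$ with its minimal polynomial $\widehat{Q}_a$ over $\widehat{K}$, all earlier terms being common. Plugging this into the description of the implicit constant field of a valuation-transcendental extension in terms of a minimal pair, both $IC(K(X)|K,v)$ and $IC(\widehat{K}(X)|\widehat{K},v)$ are governed by the single algebraic element $a$ together with the common truncated key-polynomial data; hence $IC(\widehat{K}(X)|\widehat{K},v)$ is exactly the $\widehat{K}$-analogue of $IC(K(X)|K,v)$, namely $\widehat{K}^h\cdot IC(K(X)|K,v)$. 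Note that density is unavailable here, since by Proposition~\ref{Prop not density of K(X)} $K(X)$ is not dense in $\widehat{K}(X)$; it is the persistence of the distinguished element $a$ inside $\overline{K}$ that lets the argument go through.

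The main obstacle is the step, present in all three cases, of upgrading ``the key-polynomial (and minimal-pair) data is preserved in passing from $K$ to $\widehat{K}$'' into ``$IC(\widehat{K}(X)|\widehat{K},v)$ contributes nothing beyond $\widehat{K}^h\cdot IC(K(X)|K,v)$''. Because $\widehat{K}$ is in general transcendental over $K$, one cannot simply intersect with $\overline{K}$; one must instead control the relative algebraic closure of $\widehat{K}^h$ inside $\widehat{K}(X)^h$ directly and show that it is compatible with the immediate (respectively, minimal-pair-preserving) passage from $K(X)$ to $\widehat{K}(X)$. This is precisely where excluding the case ``valuation algebraic of type II'' is essential: in that case $\widehat{K}(X)$ acquires over $\widehat{K}$ a genuinely new algebraic element $a\in\overline{\widehat{K}}\setminus\overline{K}$, and the equality fails.
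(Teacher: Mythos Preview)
Your overall strategy is right, but the key steps are not justified. In Cases IA and IIA, the Krasner step fails as written: you approximate $\theta\in\overline{\widehat{K}}$ by $\theta'\in K(X)^h$ and invoke Krasner over $\widehat{K}^h$, but Krasner's lemma (as stated in Section~\ref{Sect prelims}) requires both elements to lie in the algebraic closure of the henselian base, whereas your $\theta'$ is in general transcendental over $\widehat{K}^h$. Even granting a weaker conclusion such as $\theta\in(\widehat{K}^h(\theta'))^h$, your next assertion --- that the relative algebraic closure of $\widehat{K}^h$ in $\widehat{K}^h\cdot K(X)^h$ equals $\widehat{K}^h\cdot IC(K(X)|K,v)$ --- is essentially a restatement of what you are trying to prove; note that $\widehat{K}^h\cdot K(X)^h\supseteq\widehat{K}(X)$, so its henselization is already all of $\widehat{K}(X)^h$, and you have gained nothing. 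The paper sidesteps this by approximating inside $\overline{K}$ rather than inside $K(X)^h$: it applies Corollary~\ref{Coro d(K(a)|K) = d(K^c(a)|K^c)} (with $K^h\subseteq\widehat{K}^h\subseteq\widehat{K^h}$) to replace $\theta$ by some $a\in K^\sep$ with $\widehat{K}^h(\theta)=\widehat{K}^h(a)$, and then applies the same corollary to the chain $K(X)^h\subseteq\widehat{K}(X)^h\subseteq\widehat{K(X)^h}$ (available thanks to Theorem~\ref{Thm density of K(X)}) to obtain $[K(X)^h(a):K(X)^h]=[\widehat{K}(X)^h(a):\widehat{K}(X)^h]=1$, whence $a\in K(X)^h\cap\overline{K}=IC(K(X)|K,v)$.

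In Case IB your argument is only a sketch: saying that $IC(\widehat{K}(X)|\widehat{K},v)$ is ``the $\widehat{K}$-analogue'' of $IC(K(X)|K,v)$ is not a proof, and your citation of Corollary~\ref{Coro CSKP valn tr} is misplaced (that corollary concerns the cofinal case, not the unique-pair case; the relevant fact here is that $\g>v\overline{\widehat{K}}$ forces $(a,\g)$ to be the unique pair over $\widehat{K}$ as well). The paper makes this precise via [\ref{Dutta min fields implicit const fields}, Theorem 7.2], which identifies $IC(K(X)|K,v)$ with the separable-algebraic closure of $K^h$ in $K^h(a)$: writing $IC(K(X)|K,v)=K^h(b)$ with $b\in K^\sep$, the extension $K^h(a)|K^h(b)$ is purely inseparable, hence so is $\widehat{K}^h(a)|\widehat{K}^h(b)$, and since $\widehat{K}^h(b)|\widehat{K}^h$ is separable one concludes $IC(\widehat{K}(X)|\widehat{K},v)=\widehat{K}^h(b)=\widehat{K}^h\cdot IC(K(X)|K,v)$.
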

	As an immediate corollary, we obtain that $IC(\widehat{K}(X)|\widehat{K},v)$ is the completion of $IC(K(X)|K,v)$ provided $(K,v)$ is henselian, $IC(K(X)|K,v)|K$ is a finite extension and $(K(X)|K,v)$ is not valuation algebraic of type II.
	
	
	\section*{Acknowledgements}
	
	Preliminary versions of the work was done when the author was appointed as a postdoctoral fellow (supported by the Post-Doctoral Fellowship of the National Board of Higher Mathematics, India) in IISER Mohali. The author is presently supported by the Seed Grant F.28-3(15)/2021-22/SP114 from IIT Bhubaneswar. He is deeply indebted to Wael Mahboub for multiple inspiring discussions and insightful suggestions.


\section{Preliminaries}\label{Sect prelims}

Throughout this article, we denote the algebraic closure and the separable-algebraic closure of a field $K$ by $\overline{K}$ and $K^\sep$. If $L$ and $K$ are subfields of a larger field $\O$, then the \textbf{compositum} of $L$ and $K$ will be denoted by $L.K$. A valued field $(K,v)$ is a field $K$ equipped with a valuation $v$. An extension of valued fields $(L|K,v)$ is an extension of fields $L|K$, $v$ is a valuation on $L$ and $K$ is endowed with the restricted valuation. The value group $v(K^\times)$ of $(K,v)$ will be denoted by $vK$ and the residue field by $Kv$. The value of an element $a\in K$ will be denoted by $va$ and its residue by $av$.

\subsection{Density and Ramification theory} Let $(L|K,v)$ be an extension of valued fields. We will say that $(K,v)$ is \textbf{dense} in $(L,v)$ if for every element $a\in L$ and for every $\g\in vL$, there exists some $b\in K$ such that $v(a-b) >\g$. If this holds, then the extension $(L|K,v)$ is \textbf{immediate}, that is, $vL=vK$ and $Lv=Kv$. The maximal extension in which $(K,v)$ is dense is said to be the \textbf{completion} of $(K,v)$, which is unique up to valuation preserving isomorphism. We denote the completion of $K$ by $\widehat{K}$. It is well-known that when $(L|K,v)$ is algebraic, then $\widehat{K}\subseteq \widehat{L}$. Moreover, if $(L|K,v)$ is a finite extension of valued fields, then $\widehat{L} = L.\widehat{K}$.

\pars A valued field $(K,v)$ is said to be \textbf{henselian} if the valuation $v$ admits a unique extension to $\overline{K}$. Every valued field admits a \textbf{henselization}, that is, a minimal algebraic extension which is henselian. The henselization is uniquely determined once we fix an extension of $v$ to $\overline{K}$. Hence we can talk of \emph{the} henselization of $(K,v)$ which we will denote by $K^h$. The extension $(K^h|K,v)$ is an immediate, separable-algebraic extension. If $(L|K,v)$ is an algebraic extension, then $L^h = L.K^h$. In particular, every algebraic extension of a henselian valued field is again henselian.  

\pars The fact that $(K^h|K,v)$ is an algebraic extension implies that $\widehat{K}$ is contained in the completion of $K^h$. The completion of a henselian field is again henselian [\ref{Warner topo fields}, Theorem 32.19]. As a consequence, we have the following chain of inclusions:
\[  K^h\subseteq \widehat{K}^h\subseteq \widehat{K^h}, \]  
where $\widehat{K}^h$ denotes the henselization of $\widehat{K}$, and $\widehat{K^h}$ denotes the completion of $K^h$.

\pars Let $(L|K,v)$ be a finite extension of valued fields. Further, assume that $v$ admits a unique extension to $L$. Then the \textbf{Lemma of Ostrowski} gives us that
\[  [L:K] = (vL:vK)[Lv:Kv]p^d, \]
where $d\in\NN$ and $p$ is the \textbf{characteristic exponent} of $(K,v)$, that is, $p=\ch Kv$ whenever $\ch Kv >0$, and $p=1$ otherwise. The integer $p^d$ is said to be the \textbf{defect} of the extension $(L|K,v)$ and is denoted by $d(L|K,v)$. If $p^d=1$, then we say that the extension is \textbf{defectless}. 

\pars Observe that we can always apply the Lemma of Ostrowski to extensions of henselian valued fields. An algebraic extension $(L|K,v)$ of henselian valued fields will be said to be defectless if every finite subextension is defectless. A henselian field $(K,v)$ will be said to be (separably) defectless if the extension $(\overline{K}|K,v)$ is a defectless extension (similarly for $(K^\sep|K,v)$). An arbitrary valued field $(K,v)$ will be said to be (separably) defectless if so is its henselization.

\pars An algebraic extension $(L|K,v)$ of henselian valued fields is said to be \textbf{tame} if every finite subextension $(E|K,v)$ satisfies the following conditions:
\sn (TE1) $\ch Kv$ does not divide $(vE:v K)$,
\n (TE2) the residue field extension $Ev|Kv$ is separable,
\n (TE3) $(E|K,v)$ is defectless.\\
A henselian valued field admits a unique maximal tame extension [\ref{Kuh vln model}, Theorem 11.1]. It is called the \textbf{absolute ramification field of $(K,v)$} and we will denote it by $K^r$.

\pars A henselian field $(K,v)$ is said to be a (separably) tame field if every (separable-algebraic) algebraic extension $(L|K,v)$ is a tame extension. Equivalently, $(K,v)$ is tame field if and only if $\overline{K}=K^r$ and $(K,v)$ is a separably tame field if and only if $K^\sep = K^r$.

\pars A valued field is said to be \textbf{maximal} if it does not admit any nontrivial immediate extension. If it does not admit any nontrivial immediate algebraic extension, it is said to be \textbf{algebraically maximal}. It has been shown by Krull [\ref{Krull}] that any formal power series field $k((t))$ equipped with the $t$-adic valuation is a maximal valued field. 


\subsection{Krasner's Lemma} Let $(K,v)$ be a valued field and fix an extension of $v$ to $\overline{K}$. Take $a\in\overline{K}\setminus K$ which is not purely inseparable over $K$. The \textbf{Krasner constant} of $a$ over $K$ is defined as
\[ \kras(a,K) := \max \{ v(\s a - \tau a) \mid \s, \tau \in \Gal(\overline{K}|K) \text{ and } \s a \neq \tau a \}. \]
When $v$ admits a unique extension from $K$ to $K(a)$, we observe that
\[ \kras(a,K) := \max \{ v(\s a - a) \mid \s\in \Gal(\overline{K}|K) \text{ and } \s a \neq a \}. \]
The following is the important Krasner's Lemma:
\begin{Lemma}
	Let $(K,v)$ be a henselian valued field. Assume that $a,b\in \overline{K}$ such that $v(a-b) > \kras(a,K)$. Then $K(a,b)|K(b)$ is a purely inseparable extension. In particular, if $a$ is separable-algebraic over $K$, then $a\in K(b)$. 
\end{Lemma}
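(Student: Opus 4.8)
The plan is to exploit that, $(K,v)$ being henselian, every $\s\in\Gal(\overline{K}|K)$ preserves $v$, and then to reduce the statement to a one-line valuation-theoretic estimate. First, since $v$ has a unique extension from $K$ to $\overline{K}$, for each $\s\in\Gal(\overline{K}|K)$ the valuation $v\circ\s$ is an extension of $v|_K$ to $\overline{K}$ and hence equals $v$; in particular $v\s c=vc$ for all $c\in\overline{K}$. Next, to show that $K(a,b)=K(b)(a)$ is purely inseparable over $K(b)$, it suffices to check that $\s a=a$ for every $\s\in\Gal(\overline{K}|K(b))$: indeed, if $c\in\overline{K}$ is algebraic over $K(b)$ and fixed by every $K(b)$-automorphism of $\overline{K}$, then choosing $m$ with $c^{p^m}$ separable over $K(b)$ (where $p$ is the characteristic exponent) we get that $c^{p^m}$ lies in the fixed field of $\Gal(\overline{K}|K(b))$ on $K(b)^\sep$, which is $K(b)$ itself, so $c$ is purely inseparable over $K(b)$.

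So fix $\s\in\Gal(\overline{K}|K(b))$. Using $\s b=b$ we write $\s a-a=\s(a-b)+(b-a)$, and therefore, since $v\circ\s=v$,
\[ v(\s a-a)\ \geq\ \min\{\,v\s(a-b),\ v(b-a)\,\}\ =\ v(a-b)\ >\ \kras(a,K). \]
If we had $\s a\neq a$, then (taking $\tau=\mathrm{id}$ and $\s$ in the defining expression of the Krasner constant) $\s a$ and $a$ would be two distinct conjugates of $a$ over $K$, so by definition $v(\s a-a)\leq\kras(a,K)$, contradicting the displayed inequality. Hence $\s a=a$, and by the previous paragraph $K(a,b)|K(b)$ is purely inseparable.

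Finally, for the last assertion: if $a$ is separable-algebraic over $K$, then it is separable over $K(b)$ as well, and being simultaneously separable and purely inseparable over $K(b)$, it must lie in $K(b)$. There is no serious obstacle in this argument; the only ingredient that is not a direct computation is the Galois-theoretic characterization of purely inseparable elements used in the first paragraph, and everything else is the elementary ultrametric estimate above combined with the uniqueness of the extension of $v$ that comes from henselianity.
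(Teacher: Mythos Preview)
Your proof is correct and is essentially the standard argument for Krasner's Lemma. Note, however, that the paper does not actually prove this statement: it is recorded in Section~\ref{Sect prelims} as a well-known preliminary (``the important Krasner's Lemma'') without proof, so there is no paper proof to compare against. Your write-up supplies precisely the classical justification: henselianity forces $v\circ\s=v$ on $\overline{K}$, the ultrametric estimate $v(\s a-a)\geq v(a-b)>\kras(a,K)$ for $\s\in\Gal(\overline{K}|K(b))$ forces $\s a=a$, and the Galois-theoretic characterization of purely inseparable elements then yields the conclusion.
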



\subsection{Pseudo-Cauchy sequences} A well-ordered set $\{z_{\nu}\}_{\nu<\l}$ in a valued field $(K,v)$, where $\l$ is a limit ordinal, is said to be a \textbf{pseudo-Cauchy sequence} (pCs) if $v(z_{\nu_1} - z_{\nu_2}) < v(z_{\nu_2} - z_{\nu_3})$ for all $\nu_1<\nu_2<\nu_3<\l$. It follows from the triangle inequality that $v(z_\nu - z_{\rho}) = v(z_\nu-z_{\nu+1})$ for all $\nu<\rho<\l$. Set $\g_\nu := v(z_\nu-z_{\nu+1})$. An element $y$ in some valued field extension $(L,v)$ of $(K,v)$ is said to be a \textbf{limit} of $\{  z_\nu\}_{\nu<\l}$ if $v(y -z_\nu) = \g_\nu$ for all $\nu<\l$. Note that a pCs need not have a unique limit. Indeed, it follows from the triangle inequality that for $y,y^\prime \in L$, 
\[  y \text{ and } y^\prime \text{ are limits of } \{z_{\nu}\}_{\nu<\l} \text{ if and only if } v(y-y^\prime) > \g_\nu \text{ for all } \nu<\l. \]
A pCs $\{z_{\nu}\}_{\nu<\l}$ in $(K,v)$ is said to be a \textbf{Cauchy sequence} if $\{\g_\nu\}_{\nu<\l}$ is cofinal in $vK$. A Cauchy sequence in $(K,v)$ admits a unique limit in the completion $\widehat{K}$.

\pars Take a polynomial $f(x)\in K[x]$ and a pCs $\{z_\nu\}_{\nu<\l}$ over $K$. It follows from [\ref{Kaplansky}, Lemma 5] that the sequence $\{v f(z_\nu)\}_{\nu<\l}$ is either ultimately constant, or it is ultimately monotonically increasing. If $\{v f(z_\nu)\}_{\nu<\l}$ is ultimately constant for all polynomials $f$ over $K$, then $\{z_{\nu}\}_{\nu<\l}$ is said to be a pCs of \textbf{transcendental type}. Otherwise, it is of \textbf{algebraic type}. 


\subsection{Extensions of $v$ from $K$ to $K(X)$}
The extension $(K(X)|K,v)$ satisfies the famous Abhyankar inequality: 
\[ \rr vK(X)/vK + \trdeg [K(X)v:Kv] \leq 1, \]
where $\rr vK(X)/vK$ is the $\QQ$-dimension of the divisible hull $(vK(X)/vK) \tensor_{\ZZ} \QQ$. This inequality is a consequence of [\ref{Bourbaki}, Chapter VI, \S 10.3, Theorem 1]. The extension $(K(X)|K,v)$ is said to be \textbf{value transcendental} if $\rr vK(X)/vK = 1$ and \textbf{residue transcendental} if $\trdeg[K(X)v:Kv] = 1$. The extension is said to be \textbf{valuation transcendental} if it is either value or residue transcendental. Otherwise, it is said to be \textbf{valuation algebraic}. It follows from [\ref{Kuh value groups residue fields rational fn fields}, Lemma 3.3] that $(K(X)|K,v)$ is value (residue) transcendental if and only if $(L(X)|L,v)$ is also value (residue) transcendental, where $L$ is an arbitrary algebraic extension of $K$. Further, $(K(X)|K,v)$ is valuation algebraic if and only if $(\overline{K}(X)|\overline{K},v)$ is immediate.

\pars Take a polynomial $f(X) = \sum_{i=0}^{n} c_i X^i\in K[X]$ and an element $a\in K$. We have a unique expansion $f(X) = \sum_{i=0}^{n} C_i(X-a)^i$, where the coefficients $C_i$ are given by the \textbf{Hasse derivative}:
\[ C_i = c_i + \binom{i+1}{i} a c_{i+1} + \dotsc + \binom{n}{i} a^{n-i} c_n. \]
Take $\g$ in some ordered abelian group containing $vK$. We define the map $v_{a,\g} : K[X] \longrightarrow vK + \ZZ\g$ by setting
\[ v_{a,\g}f(X) := \min \{ v C_i + i\g \}, \]
and extend $v_{a,\g}$ canonically to $K(X)$. Then $v_{a,\g}$ is a valuation transcendental extension of $v$ from $K$ to $K(X)$ [\ref{Kuh value groups residue fields rational fn fields}, Lemma 3.10]. By definition, $v_{a,\g}(X-b) = \min \{ v_{a,\g}(X-a),v(a-b) \} \leq v_{a,\g}(X-a)$ for all $b\in K$. It follows that $v_{a,\g}(X-a) = \g = \max v_{a,\g} (X-K)$, where
\[ v_{a,\g}(X-K):= \{ v_{a,\g}(X-c) \mid c\in K \}. \]
Observe that $v_{0,0}$ is nothing but the Gau{\ss} valuation.


\subsection{Minimal pairs}
Let $(\overline{K}(X)|K,v)$ be an extension of valued fields. It has been observed in [\ref{Kuh value groups residue fields rational fn fields}, Theorem 3.11] that $(K(X)|K,v)$ is valuation transcendental if and only if $v=v_{a,\g}$ for some $a\in\overline{K}$ and $\g\in v\overline{K}(X)$. Moreover, $(K(X)|K,v)$ is value transcendental if and only if $\g\notin v\overline{K}$, that is, if and only if $\g$ is not a torsion element modulo $vK$. Such a pair $(a,\g)\in\overline{K}\times v\overline{K}(X)$ is said to be a \textbf{pair of definition for $v$ over $K$}. A valuation transcendental extension can admit multiple pairs of definition. Indeed, given a pair of definition $(a,\g)$ for $v$ over $K$, it has been observed in [\ref{AP sur une classe}, Proposition 3] that $(b,\g)\in\overline{K}\times v\overline{K}(X)$ forms another pair of definition if and only if
\[  v(a-b) \geq \g. \] 
A pair of definition $(a,\g)$ is said to be a \textbf{minimal pair of definition for $v$ over $K$} if
\[  v(a-b)\geq\g\Longrightarrow [K(a):K]\leq [K(b):K] \text{ for all } b\in\overline{K}. \] 
It then follows from the well-ordering principle that a valuation transcendental extension always admits a minimal pair of definition. 

\pars Let $(K(X)|K,v)$ be a valuation transcendental extension. Fix an extension of $v$ to $\overline{K}(X)$ and take a minimal pair of definition $(a,\g)$ for $v$ over $K$. Then the following cases are possible [\ref{Dutta min fields implicit const fields}, Theorem 3.7]:
\sn (i) $\g> v\overline{K}$. Then $v$ is value transcendental and $(a,\g)$ is the unique pair of definition for $v$ over $K$.
\n (ii) $vK$ is cofinal in $vK(X)$. Then there exists a minimal pair of definition $(b,\g)$ for $v$ over $K$ such that $b$ is separable-algebraic over $K$.\\
Now assume that $(a,\g)$ is the unique pair of definition for $v$ over $K$. Take any $f(X)\in K[X]$ and write $f(X) = z\prod_{i=1}^{n} (X-z_i)$ where $z,z_i \in\overline{K}$. Then $vf = vz + \sum_{i=1}^{n} v(X-z_i)$. By definition, $v(X-z_i) = \min\{ \g, v(a-z_i) \}$. The fact that $\g>v\overline{K}$ implies that $v(X-z_i) \in v\overline{K}$ whenever $a\neq z_i$. It follows that $vf\in v\overline{K}$ whenever $f(a)\neq 0$. Otherwise $vf>v\overline{K}$.

\subsection{Key polynomials}

Let $(\overline{K}(X)|K,v)$ be an extension of valued fields. For a polynomial $f(X)\in K[X]$, we define
\[ \d(f):= \max\{ v(X-a)\mid a \text{ is a root of }f \}. \]
A root $a$ of $f$ such that $\d(f) = v(X-a)$ is said to be a \textbf{maximal root of $f$}. A monic polynomial $Q(X)\in K[X]$ is said to be a \textbf{key polynomial for $v$ over $K$} if 
\[ \deg f < \deg Q \Longrightarrow \d(f) < \d(Q) \text{ for all } f(X)\in K[X]. \]
Given a monic polynomial $Q(X)\in K[X]$ and an arbitrary polynomial $f(X)\in K[X]$, we have a unique expansion $f = \sum_{i=0}^{m} f_i Q^i$, where $f_i(X)\in K[X]$ with $0\leq \deg f_i < \deg Q$. Consider the map
\[ v_Q : K[X]\longrightarrow vK(X) \]
given by $v_Q(f):= \min\{ vf_i + ivQ \}$. Extend $v_Q$ canonically to $K(X)$. A sufficient condition for $v_Q$ to be a valuation on $K(X)$ is that $Q$ be a key polynomial for $v$ over $K$ [\ref{Nova Spiva key pol pseudo convergent}, Proposition 2.6], but it is not a necessary condition [\ref{Novacoski key poly and min pairs}, Proposition 2.3]. A family $\L$ of key polynomials for $v$ over $K$ is said to form a \textbf{complete sequence of key polynomials for $v$ over $K$} if it satisfies the following conditions:
\sn (CSKP1) $\d(Q)\neq\d(Q^\prime)$ for $Q,Q^\prime\in\L$ with $Q\neq Q^\prime$,
\n (CSKP2) $\L$ is well ordered with respect to the ordering given by $Q<Q^\prime$ if and only if $\d(Q)<\d(Q^\prime)$,
\n (CSKP3) for any $f(X)\in K[X]$, there exists some $Q\in\L$ such that $\deg Q \leq \deg f$ and $vf = v_Qf$.  

\begin{Lemma}\label{Lemma vf leq v (a,g) f}
	Assume that $(\overline{K}(X)|K,v)$ is an extension of valued fields. Take $a\in\overline{K}$ and set $\g:= v(X-a)$. Take $f(X)\in K[X]$. Then,
	\begin{align*}
		vf &> v_{a,\g}f \text{ if and only if } \d(f)> \g,\\
		vf &= v_{a,\g}f \text{ if and only if } \d(f)\leq \g.
	\end{align*}
\end{Lemma}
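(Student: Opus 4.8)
The plan is to reduce the statement about an arbitrary polynomial $f$ to the behaviour of linear factors, exploiting that both $v$ and $v_{a,\g}$ are valuations and that $\g = v(X-a) = \max v_{a,\g}(X-K)$ need not a priori coincide with $v_{a,\g}(X-a)$, but in fact does once we observe $v$ and $v_{a,\g}$ must agree there. First I would factor $f(X) = z\prod_{i=1}^{n}(X - z_i)$ with $z, z_i \in \overline{K}$, so that $vf = vz + \sum_i v(X - z_i)$ and, by the description of $v_{a,\g}$ on linear polynomials recalled in the Minimal pairs subsection, $v_{a,\g}f = vz + \sum_i \min\{\g, v(a - z_i)\}$. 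The key pointwise comparison is then: for each root $z_i$ one has $v(X - z_i) \geq \min\{\g, v(a - z_i)\} = v_{a,\g}(X - z_i)$, with equality unless $v(X - z_i) > \g$, i.e. unless $z_i$ is a root witnessing $\d(f) > \g$. This follows from the ultrametric inequality applied to $X - z_i = (X - a) + (a - z_i)$: if $v(a - z_i) \neq \g$ the value is the minimum and equals $v_{a,\g}(X-z_i)$ exactly; if $v(a-z_i) = \g$ then $v(X - z_i) \geq \g = v_{a,\g}(X-z_i)$, and strict inequality $v(X-z_i) > \g$ is exactly the case where $z_i$ pushes $\d(f)$ above $\g$.

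Next I would assemble these pointwise facts. Summing, $vf = vz + \sum_i v(X-z_i) \geq vz + \sum_i v_{a,\g}(X-z_i) = v_{a,\g}f$ always, which gives the ``$\geq$'' halves of both displayed equivalences for free. For the characterization of strictness: $vf > v_{a,\g}f$ holds if and only if at least one summand is strict, i.e. if and only if there is a root $z_i$ with $v(X - z_i) > \g$; but $v(X - z_i) > \g$ for some root is precisely the statement $\d(f) = \max_i v(X - z_i) > \g$. Conversely $vf = v_{a,\g}f$ holds if and only if every summand is an equality, i.e. $v(X - z_i) \leq \g$ for all roots, i.e. $\d(f) \leq \g$. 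This is exactly the two claimed equivalences, and since they are logical negations of one another over the dichotomy $\d(f) > \g$ versus $\d(f) \leq \g$, proving one yields the other.

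The one point requiring a little care — and the place I would expect the only genuine friction — is making sure the linear-factor formula for $v_{a,\g}$ is being used correctly when $a$ itself is among the $z_i$, equivalently when $f(a) = 0$: then the corresponding factor contributes $v(X - a) = \g$ on the $v$ side and $\min\{\g, v(a-a)\} = \min\{\g, \infty\} = \g$ on the $v_{a,\g}$ side, so it is harmless and still an equality. One should also note that $v(X - a) = \g$ by hypothesis forces $v$ and $v_{a,\g}$ to agree on $X - a$, and more generally the inequality $v(X-z_i) \geq v_{a,\g}(X-z_i)$ together with agreement on all constants $a - z_i$ is what drives everything; no appeal to $v$ being valuation transcendental or to minimality of the pair $(a,\g)$ is needed. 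Everything else is the triangle inequality and additivity of valuations over products, so the write-up is short.
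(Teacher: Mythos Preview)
Your proposal is correct and follows essentially the same approach as the paper: factor $f$ over $\overline{K}$, compare each linear factor $v(X-z_i)$ with $v_{a,\g}(X-z_i)=\min\{\g,v(a-z_i)\}$ via the ultrametric inequality applied to $(X-a)+(a-z_i)$, and sum. The paper's write-up is terser (it does not separately discuss the case $a=z_i$ or the case $v(a-z_i)=\g$), but the argument is the same.
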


\begin{proof}
	Write $f(X) = z \prod_{i=1}^{n} (X-z_i)$ where $z,z_i \in\overline{K}$. By definition, $v_{a,\g} (X-z_i) = \min \{ \g, v(a - z_i) \}$. It then follows from the triangle inequality that $v_{a,\g}(X-z_i) \leq v(X-z_i)$. As a consequence, 
	\[ v_{a,\g}f \leq vf. \]
	Further, $v_{a,\g}f < vf$ if and only if $v_{a,\g} (X-z_i) < v(X-z_i)$ for some $i$, which holds if and only if $v(X-z_i) > \g = v(a-z_i)$. The lemma now follows.
\end{proof}

\begin{Lemma}\label{Lemma vQ key pol}
	Assume that $(\overline{K}(X)|K,v)$ is an extension of valued fields. Take a key polynomial $Q(X)$ for $v$ over $K$ and a maximal root $a$ of $Q$. Then,
	\[ v_{a, \d(Q)} |_{K(X)} = v_Q. \]
\end{Lemma}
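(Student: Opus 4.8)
The goal is to show that for a key polynomial $Q$ with maximal root $a$, the monomial valuation $v_{a,\d(Q)}$ restricted to $K(X)$ coincides with $v_Q$. Write $\g := \d(Q) = v(X-a)$. Since both $v_Q$ and $v_{a,\g}|_{K(X)}$ are maps $K(X) \to vK(X)$ that are multiplicative, it suffices to compare them on $K[X]$, and in fact it suffices to check the equality $v_Q(f) = v_{a,\g}(f)$ for every $f \in K[X]$. The plan is to use the $Q$-adic expansion $f = \sum_{i=0}^m f_i Q^i$ with $\deg f_i < \deg Q$, so that $v_Q(f) = \min_i \{ v f_i + i\, vQ \}$ by definition, and to evaluate $v_{a,\g}$ term by term.

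First I would record two facts about $v_{a,\g}$. By Lemma \ref{Lemma vf leq v (a,g) f}, for any $g \in K[X]$ we have $v g = v_{a,\g} g$ precisely when $\d(g) \le \g$, and $v g > v_{a,\g} g$ when $\d(g) > \g$. Apply this to $Q$ itself: since $a$ is a maximal root, $\d(Q) = \g$, so $v_{a,\g} Q = vQ$. Apply it also to each coefficient $f_i$: because $\deg f_i < \deg Q$ and $Q$ is a key polynomial, $\d(f_i) < \d(Q) = \g$, hence $v f_i = v_{a,\g} f_i$. Therefore, for each index $i$, $v_{a,\g}(f_i Q^i) = v_{a,\g} f_i + i\, v_{a,\g} Q = v f_i + i\, vQ$, which is exactly the $i$-th term appearing in the formula for $v_Q(f)$.

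Now the equality $v_{a,\g}(f) = \min_i \{ v f_i + i\, vQ \} = v_Q(f)$ would follow immediately from the triangle inequality \emph{if} the minimum among the terms $v_{a,\g}(f_i Q^i)$ were attained at a unique index. The genuine content of the argument, and the step I expect to be the main obstacle, is handling the case where this minimum is attained at two or more indices, where a priori cancellation could make $v_{a,\g}(f)$ strictly larger than the minimum. The standard way around this is to show that the terms $f_i Q^i$ lie in distinct residue classes in an appropriate graded sense, or more concretely: evaluate everything after the substitution $X \mapsto a$ in the graded ring associated to $v_{a,\g}$. Since $\deg f_i < \deg Q$, the polynomial $f_i$ behaves like an element of $K$ as far as the leading $(X-a)$-adic data is concerned; more precisely, two distinct powers $Q^i$ and $Q^j$ have $v_{a,\g}$-values $i\g$ and $j\g$ modulo $vK + (\text{value of }f_i)$-contributions, and the key-polynomial property forces these to be separated. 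I would make this rigorous by passing to the expansion of $f$ in powers of $(X-a)$ and comparing the minimal-value monomials contributed by each $f_i Q^i$: the term of smallest $v_{a,\g}$-value in $f_i Q^i$ is (up to a unit) $C_i^{(f_i)} (X-a)^{d_i} \cdot (X-a)^{i \deg Q}$ where $d_i < \deg Q$, so the exponents $d_i + i\deg Q$ occurring for different $i$ all differ (they lie in distinct residues mod $\deg Q$ once we note $d_i < \deg Q$), and hence the leading monomials cannot cancel. This gives $v_{a,\g}(f) = \min_i \{ v f_i + i\, vQ\} = v_Q(f)$, and since this also shows in particular that $v_Q$ is a valuation, the restriction statement $v_{a,\d(Q)}|_{K(X)} = v_Q$ follows. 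Finally, I would remark that this argument simultaneously reproves that $v_Q$ is a valuation on $K(X)$ when $Q$ is a key polynomial, consistent with the cited Proposition 2.6 of [\ref{Nova Spiva key pol pseudo convergent}].
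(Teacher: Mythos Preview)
Your overall strategy --- compare $v_Q$ and $v_{a,\g}$ term by term via the $Q$-adic expansion, and argue that no cancellation can occur among the initial terms of the $f_iQ^i$ --- is a legitimate route, and the first part of your reduction is fine: Lemma \ref{Lemma vf leq v (a,g) f} indeed gives $v_{a,\g}f_i = vf_i$ (since $\deg f_i<\deg Q$ forces $\d(f_i)<\d(Q)$) and $v_{a,\g}Q = vQ$, so $v_{a,\g}(f_iQ^i)=vf_i+i\,vQ$ for each $i$.

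The gap is in your no-cancellation step. You assert that the term of smallest $v_{a,\g}$-value in the $(X-a)$-expansion of $f_iQ^i$ sits at exponent $d_i+i\deg Q$ with $d_i<\deg Q$, so that these exponents are pairwise distinct modulo $\deg Q$. This presupposes that the minimal-value term of $Q$ itself occurs at exponent $\deg Q$, which is false in general. For a concrete instance, take $K=\QQ$ with the trivial valuation, $a=\sqrt{2}$, $\g=1$, and $Q=X^2-2$. Then $Q=(X-a)^2+2\sqrt{2}\,(X-a)$, so $v_{a,\g}Q$ is achieved at the $(X-a)^1$ term, not the $(X-a)^2$ term; yet one checks easily that $Q$ is a key polynomial for $v_{a,\g}$ over $\QQ$. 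In general the minimal-value term of $Q$ can sit at \emph{any} exponent between the multiplicity of $a$ and $\deg Q$, and for $Q^i$ the situation is worse still: several exponents may tie, and there is no reason for the minimal ones coming from different $i$ to be separated modulo $\deg Q$. So the ``distinct residues mod $\deg Q$'' argument does not go through.

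A correct no-cancellation argument exists, but it is exactly the non-trivial content of [\ref{Nova Spiva key pol pseudo convergent}, Proposition 2.6] and [\ref{Novacoski key poly and min pairs}, Theorem 1.1]; it requires analysing the initial form of $Q$ in the graded ring of $v_{a,\g}$ rather than just tracking a single exponent. The paper sidesteps this entirely: it shows directly that $(a,\d(Q))$ is a \emph{minimal} pair of definition for $v_{a,\d(Q)}$ over $K$ (using only the key-polynomial property of $Q$ and the fact that any other pair $(a',\d(Q))$ has $v(X-a')\geq\d(Q)$, hence $\deg Q_{a'}\geq\deg Q$), and then invokes [\ref{Novacoski key poly and min pairs}, Theorem 1.1] to conclude $v_{a,\d(Q)}|_{K(X)}=v_Q$. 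That route is both shorter and avoids reproving the cited result.
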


\begin{proof}
	By definition, $(a,\d(Q))$ is a pair of definition for $v_{a,\d(Q)}$ over $K$. Take another pair of definition $(a^\prime, \d(Q))$. Then $v(a-a^\prime) \geq \d(Q)=v(X-a)$ and hence $v(X-a^\prime) \geq \d(Q)$. Take the minimal polynomial $Q^\prime(X)$ of $a^\prime $ over $K$. The fact that $v(X-a^\prime) \geq \d(Q)$ implies that $\d(Q^\prime) \geq \d(Q)$. As a consequence of $Q(X)$ being a key polynomial for $v$ over $K$, we conclude that $\deg Q^\prime \geq \deg Q$, that is, $[K(a^\prime):K] \geq [K(a):K]$. It follows that $(a,\d(Q))$ is a minimal pair of definition for $v_{a,\d(Q)}$ over $K$. In light of [\ref{Novacoski key poly and min pairs}, Theorem 1.1], we conclude that $v_{a, \d(Q)} |_{K(X)} = v_Q$.
\end{proof}

The following result is now immediate in light of Lemma \ref{Lemma vf leq v (a,g) f} and Lemma \ref{Lemma vQ key pol}:

\begin{Proposition}\label{Prop vf = v_Q f iff d(f) leq d(Q)}
	Assume that $(\overline{K}(X)|K,v)$ is an extension of valued fields. Take a polynomial $f(X)\in K[X]$ and a key polynomial $Q(X)$ for $v$ over $K$. Then,
	\[ vf = v_Q f \text{ if and only if } \d(f) \leq \d(Q). \]
\end{Proposition}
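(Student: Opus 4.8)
The plan is to deduce the proposition directly from Lemma~\ref{Lemma vf leq v (a,g) f} and Lemma~\ref{Lemma vQ key pol} by routing the comparison through a maximal root of $Q$. First I would fix a maximal root $a$ of the key polynomial $Q$, so that by the definition of a maximal root one has $\d(Q) = v(X-a)$. Put $\g := \d(Q) = v(X-a)$; this is exactly the hypothesis configuration appearing in both lemmas.

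Next, since $Q$ is a key polynomial for $v$ over $K$, the map $v_Q$ is a valuation on $K(X)$, and Lemma~\ref{Lemma vQ key pol} identifies it as $v_Q = v_{a,\d(Q)}|_{K(X)} = v_{a,\g}|_{K(X)}$. In particular $v_Q f = v_{a,\g} f$ for the polynomial $f$ under consideration. On the other hand, Lemma~\ref{Lemma vf leq v (a,g) f}, applied with this same $a$ and $\g = v(X-a)$, asserts that $vf = v_{a,\g} f$ holds if and only if $\d(f) \le \g$. Combining the two, we obtain the chain $vf = v_Q f \Longleftrightarrow vf = v_{a,\g} f \Longleftrightarrow \d(f) \le \g = \d(Q)$, which is precisely the assertion.

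There is no genuine obstacle here: the substance has already been extracted in the two lemmas, so the proof is a one-line splice. The only points I would make explicit are that a maximal root of $Q$ exists and realizes $\d(Q)$ (immediate from the definition of $\d$), and that $v_Q$ is a bona fide valuation on $K(X)$ so that the restriction identity of Lemma~\ref{Lemma vQ key pol} is meaningful — this is exactly where the hypothesis that $Q$ is a \emph{key} polynomial, rather than an arbitrary monic polynomial, is used, via [\ref{Nova Spiva key pol pseudo convergent}, Proposition 2.6].
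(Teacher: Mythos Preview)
Your proof is correct and follows exactly the route the paper intends: the paper states that the result is immediate from Lemma~\ref{Lemma vf leq v (a,g) f} and Lemma~\ref{Lemma vQ key pol}, and your splice via a maximal root $a$ of $Q$ with $\g=\d(Q)$ is precisely how those two lemmas combine. The extra remark about $v_Q$ being a valuation is harmless but not needed, since Lemma~\ref{Lemma vQ key pol} already exhibits $v_Q$ as the restriction of the valuation $v_{a,\d(Q)}$.
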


As an immediate consequence, we observe that a complete sequence of key polynomials $\{Q_\nu\}_{\nu\in\L}$ for the extension $(K(X)|K,v)$ necessarily satisfies that
\[  \{ \d(Q_\nu) \}_{\nu\in\L} \text{ is a cofinal subset of } v(X-\overline{K}). \]

\begin{Lemma}\label{Lemma CSKP valn tr final element}
	Assume that $(\overline{K}(X)|K,v)$ is an extension of valued fields such that $(K(X)|K,v)$ is valuation transcendental. Take a minimal pair of definition $(a,\g)$ for $v$ over $K$ and the minimal polynomial $Q_a$ of $a$ over $K$. Then there exists a complete sequence of key polynomials for $v$ over $K$ with $Q_a$ as its final element.
\end{Lemma}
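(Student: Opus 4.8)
The plan is to build the complete sequence of key polynomials from the bottom up, using $\d$-values to index everything, and then show that $Q_a$ sits at the top as the last element. First I would observe that since $(K(X)|K,v)$ is valuation transcendental, by \cite{Kuh value groups residue fields rational fn fields} we have $v = v_{a,\g}$ for the chosen minimal pair $(a,\g)$, and by Lemma \ref{Lemma vf leq v (a,g) f} this means that for every $f(X) \in K[X]$ we have $vf = v_{a,\g}f$ precisely when $\d(f) \le \g$, while $vf > v_{a,\g}f$ when $\d(f) > \g$. The key point is that $\g = v(X-a) = \d(Q_a)$: indeed $a$ is a root of $Q_a$ so $\d(Q_a) \ge \g$, and if $a'$ were a root of $Q_a$ with $v(X-a') > \g$ then by the minimal-pair criterion $(a',\g)$ would be another pair of definition forcing $[K(a'):K] \ge [K(a):K]$, but $a'$ is conjugate to $a$ so they have equal degree and one checks no strict increase of $\d$ can occur — more carefully, $\d(Q_a) > \g$ together with minimality of $(a,\g)$ leads to a contradiction since any root realizing a strictly larger $\d$-value would again give a pair of definition of the same degree, yet $v_{a,\g}(X - a') \le \g$ always. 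Thus $\d(Q_a) = \g$, and moreover $Q_a$ is a key polynomial for $v$ over $K$: if $\deg f < \deg Q_a$ then $K(b) \subsetneq$ the degree of $a$ for any root $b$ of $f$, so by minimality of $(a,\g)$ no such $b$ can satisfy $v(X-b) \ge \g$, whence $\d(f) < \g = \d(Q_a)$.

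Next I would construct the rest of the sequence. Consider the set $S := \{\d(f) \mid f(X) \in K[X], \deg f < \deg Q_a\} \subseteq v(X-\overline{K})$; since $\d(f) < \g$ for all such $f$, every element of $S$ is $< \g$. For each value $\mu \in S$ I want a key polynomial of $\d$-value $\mu$ and degree $\le$ than whatever is needed; the standard way is to invoke the existence of a complete sequence of key polynomials for the \emph{truncated} valuation. Concretely, I would take $b \in \overline{K}$ with $v(X-b) = \mu$ chosen to have minimal degree over $K$ among all such $b$ (well-ordering principle), so that $(b,\mu)$ is a minimal pair; the valuation $v_{b,\mu}$ on $K(X)$ then has a complete sequence of key polynomials by the general theory (e.g. via \cite{Novacoski key poly and min pairs} / \cite{MacLane key pols}-style arguments, or by induction on $\deg Q_a$), and crucially $v_{b,\mu}$ agrees with $v$ on all polynomials of degree $< \deg Q_a$ because $\d(f) \le \mu$ forces agreement and the relevant $f$'s all have $\d(f) \le \max S$. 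The cleanest route is an induction on $d := \deg Q_a$: if $d = 1$ then $a \in K$, $Q_a = X - a$, and $\{X-a\}$ (or more precisely the sequence $\{X - c : c \in K\}$ suitably indexed, together with $X-a$ as final element) works directly since every polynomial splits into linear factors handled by $v_{a,\g}$; if $d > 1$, pick any key polynomial $Q'$ for $v$ over $K$ with $\deg Q' < d$ maximizing $\d(Q')$ (this exists and is $< \g$), apply the inductive hypothesis-style construction below $Q'$, and then bridge the gap between $\d(Q')$ and $\g$.

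To finish, I would verify the three axioms (CSKP1)--(CSKP3) for the assembled family $\L$ with $Q_a$ appended as maximal element. (CSKP1) and (CSKP2) hold by construction since we index by distinct, well-ordered $\d$-values $\mu_\nu$ with $\g$ strictly above all of them, so $Q_a$ is genuinely the final element. For (CSKP3): given $f(X) \in K[X]$, if $\d(f) \le \g$ then $vf = v_{a,\g}f = v_{Q_a}f$ by Lemma \ref{Lemma vf leq v (a,g) f} combined with Proposition \ref{Prop vf = v_Q f iff d(f) leq d(Q)} (using $\d(Q_a) = \g$), and $\deg Q_a \le \deg f$ can be arranged — if $\deg f < \deg Q_a$ then $\d(f) < \g$ and we instead fall into the lower part of the sequence; if $\d(f) > \g$, that is impossible because then $v_{a,\g}f < vf$ would contradict $v = v_{a,\g}$. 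So in fact every $f$ with $\deg f \ge \deg Q_a$ is handled by $Q_a$ itself, and every $f$ with $\deg f < \deg Q_a$ is handled by the inductively constructed initial segment. The main obstacle I anticipate is the bookkeeping in the inductive step — ensuring that the complete sequence of key polynomials for the truncation $v_{b,\mu}$ (or $v_{Q'}$) really does serve as an initial segment for $v$, i.e. that the key polynomials and their $v_Q$-valuations are unaffected by passing from the truncated valuation to $v$, which rests on the fact that all the polynomials involved have $\d$-value below $\g$ and hence $v$ and the truncation agree on them; making this precise without circularity is the delicate part, and I would lean on Proposition \ref{Prop vf = v_Q f iff d(f) leq d(Q)} throughout to control exactly when $v_Q f = vf$.
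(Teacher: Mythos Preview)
Your approach is workable in principle but far more laborious than the paper's, and the inductive construction you sketch is left genuinely incomplete. The paper's argument is much shorter: it simply \emph{starts} from an arbitrary complete sequence of key polynomials $\{Q_\nu\}_{\nu\in\L}$ for $v$ over $K$ (whose existence is taken as known from the general theory), observes that $\{\d(Q_\nu)\}_{\nu\in\L}$ is cofinal in $v(X-\overline{K})$ and that this set has maximum $\g$ (since $(a,\g)$ is a pair of definition), so the sequence already has a final element $Q_b$ with $\d(Q_b)=\g$. By [\ref{Novacoski key poly and min pairs}, Theorem 1.1] the maximal root $b$ of $Q_b$ gives a minimal pair $(b,\g)$, whence $\deg Q_b=\deg Q_a$ by minimality of $(a,\g)$, and one checks directly that swapping $Q_b$ for $Q_a$ preserves (CSKP1)--(CSKP3).

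So the paper reduces the lemma to a one-element swap in an already-existing complete sequence, whereas you are essentially re-proving existence of complete sequences from scratch via an induction on $\deg Q_a$. That induction is the hard part --- it is precisely the content of the MacLane--Vaqui\'e and Novacoski--Spivakovsky machinery --- and your sketch does not resolve the ``bookkeeping'' you yourself flag: you need that key polynomials for the truncated valuation $v_{Q'}$ remain key polynomials for $v$ and that their associated truncations agree, which is not immediate from $\d$-value comparisons alone. Your argument for $\d(Q_a)=\g$ is also over-complicated: since $(a,\g)$ is a pair of definition, $\g=\max v(X-\overline{K})$, so $\d(f)\le\g$ for \emph{every} $f\in K[X]$; together with $v(X-a)=\g$ this gives $\d(Q_a)=\g$ without any conjugate-chasing.
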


\begin{proof}
	The fact that $(a,\g)$ is a pair of definition implies that $\g = \max v(X-\overline{K})$. Take a complete sequence of key polynomials $\{Q_\nu\}_{\nu\in\L}$ for $v$ over $K$. Then $\{\d(Q_\nu)\}_{\nu<\in\L}$ is cofinal in $v(X-\overline{K})$. As a consequence, $\{\d(Q_\nu)\}_{\nu\in\L}$ contains $\g$ as its final element. Take the corresponding final key polynomial $Q_b$ with the maximal root $b$. It follows from [\ref{Novacoski key poly and min pairs}, Theorem 1.1] that $(b,\g)$ is a minimal pair of definition for $v$ over $K$. The minimality of $(a,\g)$ then implies that $\deg Q_a = \deg Q_b$. Write the complete sequence of key polynomials as $\{Q_\nu\}_{\nu\in\L_0}\union\{Q_b\}$. We can then directly check that $\{Q_\nu\}_{\nu\in\L_0}\union\{Q_a\}$ also forms a complete sequence of key polynomials for $v$ over $K$.
\end{proof}

\subsection{Continuity of roots}

We refer the reader to [\ref{Kuh book}] for a proof of the following important theorem:

\begin{Theorem}[Continuity of roots]
	Let $(K,v)$ be an arbitrary valued field. Fix an extension of $v$ to $\overline{K}$. Take the Gau{\ss} valuation $v_{0,0}$ extending $v$ to $\overline{K}(X)$. Take $\a\in vK$ and $f,f^\prime\in K[X]$. Let $\deg f = \deg f^\prime = n$ and take the leading coefficients $c_n$ and $c^\prime_n$ of $f$ and $f^\prime$ respectively. If 
	\[ v_{0,0}(f-f^\prime) > n^n\a -3n^n (v_{0,0} f-vc_n) + vc_n, \]
	then we can write $f(X) = c_n \prod_{i=1}^{n} (X-z_i)$ and $f^\prime(X) = c^\prime_n \prod_{i=1}^{n} (X-z^\prime_i)$ such that $v(z_i - z^\prime_i) >\a$ for all $i$.
\end{Theorem}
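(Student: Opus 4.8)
\emph{Proof proposal.} The plan is to argue over the algebraic closure: write $f=c_n\prod_{i=1}^{n}(X-z_i)$ and $f'=c'_n\prod_{i=1}^{n}(X-z'_i)$ with $z_i,z'_i\in\overline{K}$, and prove the statement by induction on $n$ (we are free to label the roots as we please). Two elementary facts are used throughout: (a) for a monic $\phi(X)=\prod_i(X-w_i)\in\overline{K}[X]$ and any $y\in\overline{K}$, $v\phi(y)=\sum_i v(y-w_i)$; and (b) every root $z$ of a monic $g\in\overline{K}[X]$ of degree $n$ satisfies $vz\ge v_{0,0}g$ (substitute $z^n=-\sum_{k<n}a_kz^k$ into the ultrametric inequality). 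First one observes that if $\a<\b:=v_{0,0}f-vc_n$, then every pairing works, since by (b) all roots then have value $\ge\b>\a$; so assume $\a\ge\b$. As $\b\le 0$ this gives $\a\ge 3\b$, and a short check shows the stated bound then forces $vc'_n=vc_n$ and $v_{0,0}f'=v_{0,0}f$; hence by (b) we have $vz_i\ge\b$ and $vz'_i\ge\b$ for all $i$, and the hypothesis is now symmetric under $f\leftrightarrow f'$.

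The heart of the argument is a distance estimate: every root $z$ of $f$ is \emph{much closer} than $\a$ to a suitable root of $f'$. Since $f(z)=0$, $vf'(z)=v\big((f'-f)(z)\big)\ge v_{0,0}(f-f')+n\b$, using $\deg(f-f')\le n$, that all its coefficients have value $\ge v_{0,0}(f-f')$, and $vz\ge\b\le 0$; by (a), $vf'(z)=vc_n+\sum_j v(z-z'_j)$. Hence $\sum_j v(z-z'_j)\ge v_{0,0}(f-f')-vc_n+n\b$, so the nearest root $z'$ among the $z'_j$ satisfies
\[ v(z-z')\;\ge\;\tfrac1n\big(v_{0,0}(f-f')-vc_n\big)+\b\;>\;n^{n-1}\a-3n^{n-1}\b+\b, \]
and using $\a\ge\b$ and $\b\le 0$ one sees the right-hand side is $\ge\a$ — indeed with slack of order $n^{n-1}$. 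By symmetry the same holds with $f$ and $f'$ interchanged.

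Now induct on $n$; the case $n=0$ is vacuous. For $n\ge 1$, pick any root $z_1$ of $f$ and, by the estimate above, a root $z'=z'_{j_0}$ of $f'$ with $v(z_1-z')$ large; put $g:=f/(X-z_1)$ and $g':=f'/(X-z')$, both of degree $n-1$ with leading coefficients $c_n$ and $c'_n$. From
\[ g-g'\;=\;\frac{(f-f')(X-z')+f'(z_1-z')}{(X-z_1)(X-z')} \]
one bounds the Gauss value of the numerator below by $vc_n+n^{n-1}\a-(3n^{n-1}-2)\b$ (the first summand contributes $\ge v_{0,0}(f-f')+\b$, the second $v_{0,0}f'+v(z_1-z')$, which turns out to be the smaller), while $v_{0,0}\big((X-z_1)(X-z')\big)\le 0$; hence $v_{0,0}(g-g')>vc_n+n^{n-1}\a-(3n^{n-1}-2)\b$. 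The Gauss-value parameter $\b_g:=v_{0,0}g-vc_n$ of $g$ lies in $[\b,0]$, and an elementary inequality (which comes down to $n^{n-1}-(n-1)^{n-1}\ge 1$) gives $vc_n+n^{n-1}\a-(3n^{n-1}-2)\b\ge vc_n+(n-1)^{n-1}\a-3(n-1)^{n-1}\b_g$; thus $g$ and $g'$ satisfy the hypothesis of the theorem in degree $n-1$. The inductive hypothesis then pairs $\{z_i\}_{i\ne 1}$ with $\{z'_j\}_{j\ne j_0}$ at distance $>\a$, and adjoining the pair $(z_1,z')$ finishes the reindexing.

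I expect the difficulty to lie entirely in the bookkeeping rather than the ideas: one must carry the precise bound through $n$ iterations, during which the relevant Gauss-value parameter $\b$ is replaced by the possibly larger $\b_g$, the precision is roughly divided by $n$ (passing from a polynomial to its roots), and then degraded by a bounded multiple of $\b$ (dividing out $X-z_1$). Producing an invariant that survives all $n$ steps uniformly in $n$ — and that also works when $\a<0$ and when $\b$ is very negative — is exactly what forces the stated shape $n^n\a-3n^n(v_{0,0}f-vc_n)+vc_n$ of the bound. Carrying out this verification in complete detail is what the cited source [\ref{Kuh book}] does, and it is legitimate to import it here.
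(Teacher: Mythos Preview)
The paper does not prove this theorem at all: it simply states the result and refers the reader to [\ref{Kuh book}] for a proof. So there is no argument in the paper to compare your proposal against.

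Your sketch is a sensible outline of the standard inductive approach (peel off one root at a time, control the Gauss value of the quotient, iterate), and the key estimate---that any root of $f$ lies within value $>\alpha$ of some root of $f'$---is derived correctly from $vf'(z)=v\big((f'-f)(z)\big)$ together with the root bound $vz_i\ge\beta$. The algebraic identity $(X-z_1)(X-z')(g-g')=(X-z')(f-f')+(z_1-z')f'$ is right, and your bookkeeping in the inductive step (that the degraded bound still implies the degree-$(n-1)$ hypothesis via $n^{n-1}-(n-1)^{n-1}\ge 1$ for $n\ge2$) checks out. One small point deserves care: in the case $\alpha<\beta$ you assert ``every pairing works since all roots have value $\ge\beta$'', but at that stage you have not yet established $v_{0,0}f'=v_{0,0}f$ and $vc'_n=vc_n$, so the bound $vz'_j\ge\beta$ for the roots of $f'$ is not immediate; this boundary case needs a separate line. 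You are right that the content lies in the bookkeeping rather than the idea, and since the paper itself imports the result wholesale from [\ref{Kuh book}], your proposal already does more than the paper.
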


 As an immediate corollary, we obtain the following result which plays an important role throughout the rest of this article:
 
\begin{Lemma}\label{Lemma deg f and d(f)}
	Take an extension of valued fields $(\overline{\widehat{K}}(X)|K,v)$ and a polynomial $f(X)\in \widehat{K}[X]$. Assume that $\d(f) < \a$ for some $\a\in vK$. Then there exists $f^\prime(X) \in K[X]$ such that $\deg f = \deg f^\prime$, $vf = v f^\prime$ and $\d(f) = \d(f^\prime)$.
\end{Lemma}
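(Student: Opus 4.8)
The plan is to deduce this directly from the Continuity of roots theorem above, applied to the valued field $(\widehat{K},v)$ (the theorem being stated for an \emph{arbitrary} valued field), together with the density of $K$ in $\widehat{K}$. The guiding observation is that both $vf$ and $\d(f)$ depend only on the leading coefficient and the multiset of roots of $f$: if $f(X)=c_n\prod_{i=1}^n(X-z_i)$ then $vf = vc_n + \sum_i v(X-z_i)$ and $\d(f) = \max_i v(X-z_i)$. Hence a perturbation $f^\prime$ of $f$ whose roots all move by strictly more than $\a$ (which is strictly larger than $\d(f)$), and whose leading coefficient keeps the same $v$-value, automatically preserves $\deg f$, $vf$ and $\d(f)$ \emph{exactly}, not merely approximately.

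Concretely, I would first dispose of the case $\deg f = 0$ (take any $f^\prime\in K$ with $vf^\prime = vf$, which exists since $K$ is dense in $\widehat{K}$) and assume $n:=\deg f\geq 1$. Write $f(X)=\sum_{i=0}^n c_iX^i$ with $c_i\in\widehat{K}$ and $c_n\neq 0$, and note that $\d(f)<\a$ means $v(X-z)\leq\d(f)<\a$ for every root $z\in\overline{\widehat{K}}$ of $f$. Let $v_{0,0}$ denote the Gau{\ss} valuation extending $v$ from $\widehat{K}$ to $\overline{\widehat{K}}(X)$. Since $K$ is dense in $\widehat{K}$ and $\a\in vK = v\widehat{K}$, I can choose, for each $i$, an element $c_i^\prime\in K$ with $v(c_i-c_i^\prime)$ larger than both $vc_n$ and the quantity $n^n\a - 3n^n(v_{0,0}f - vc_n) + vc_n$. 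Setting $f^\prime(X):=\sum_{i=0}^n c_i^\prime X^i\in K[X]$, the choice at $i=n$ forces $vc_n^\prime = vc_n$, so $c_n^\prime\neq 0$ and $\deg f^\prime = n = \deg f$, while $v_{0,0}(f-f^\prime)=\min_i v(c_i-c_i^\prime)$ exceeds the bound appearing in the Continuity of roots theorem.

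Applying that theorem (with a suitable labelling of the roots), I obtain factorizations $f(X)=c_n\prod_{i=1}^n(X-z_i)$ and $f^\prime(X)=c_n^\prime\prod_{i=1}^n(X-z_i^\prime)$ with $v(z_i-z_i^\prime)>\a$ for all $i$. For each $i$ we then have $v(X-z_i)\leq\d(f)<\a<v(z_i-z_i^\prime)$, so the ultrametric inequality gives $v(X-z_i^\prime)=v(X-z_i)$. Since $\deg f^\prime = n$, the $z_i^\prime$ are \emph{all} the roots of $f^\prime$, hence $\d(f^\prime)=\max_i v(X-z_i^\prime)=\max_i v(X-z_i)=\d(f)$, and $vf^\prime = vc_n^\prime + \sum_i v(X-z_i^\prime) = vc_n + \sum_i v(X-z_i) = vf$. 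Together with $\deg f^\prime = \deg f$ this is exactly the assertion.

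There is no serious obstacle here; the proof is essentially bookkeeping once Continuity of roots is in hand. The only two points demanding a little care are: (i) invoking the Continuity of roots theorem over $\widehat{K}$ rather than over $K$ — this is legitimate because the theorem holds for an arbitrary valued field, its hypothesis refers only to the auxiliary Gau{\ss} valuation, and its conclusion refers only to distances between roots in $\overline{\widehat{K}}$, which is the relevant data for $v$, $\d$ and $\deg$; and (ii) arranging the chain of strict inequalities $v(X-z_i)\leq\d(f)<\a<v(z_i-z_i^\prime)$, which is precisely what upgrades "$v(X-z_i^\prime)$ close to $v(X-z_i)$" to the exact equality $v(X-z_i^\prime)=v(X-z_i)$, and hence forces $\d$ and $vf$ to be preserved on the nose.
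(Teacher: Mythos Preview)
Your proof is correct and follows essentially the same approach as the paper: invoke continuity of roots over $\widehat{K}$, use density of $K$ in $\widehat{K}$ to perturb the coefficients into $K$, and then use the ultrametric inequality with the chain $v(X-z_i)\leq\d(f)<\a<v(z_i-z_i^\prime)$ to conclude that each $v(X-z_i)$ is preserved exactly, whence $\d(f)$ and $vf$ are preserved. The paper's proof is slightly less explicit (it does not separate out the degree~$0$ case and simply says ``we can assume $vc_n = vc_n^\prime$ without loss of generality'' rather than building this into the choice of $c_i^\prime$), but the argument is the same.
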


\begin{proof}
	Write $f(X) = \sum_{i=0}^{n} c_i X^i = c_n \prod_{i=1}^{n} (X-z_i)$. By the continuity of roots, there exists $\d\in vK$ such that after suitable renumbering,
	\[ v(z_i - z^\prime_i) > \a \text{ for all } i \text{ whenever } v(c_i - c^\prime_i) > \d \text{ for all }i, \]
	where $c^\prime_n \neq 0$ and $\sum_{i=0}^{n} c^\prime_i X^i = c^\prime_n\prod_{i=1}^{n} (X-z^\prime_i)$. Choose suitable $c^\prime_i \in K$ and write $f^\prime(X) := \sum_{i=0}^{n} c^\prime_i X^i \in K[X]$. Then $\deg f = \deg f^\prime$. Further, 
	\[ v(z_i - z^\prime_i) > \a > \d(f) \geq v(X-z_i) \text{ for all }i. \]
	It then follows from the triangle inequality that $v(X-z_i) = v(X-z^\prime_i)$ for all $i$. As a consequence, we obtain that $\d(f) = \d(f^\prime)$. Moreover, observe that we can always choose $c^\prime_n \in K$ satisfying $v(c_n-c^\prime_n) > \max \{ \d,vc_n \}$. Hence $vc_n = vc^\prime_n$ and as a consequence,
	\[ v f^\prime = v c^\prime_n + \sum_{i=1}^{n} v(X-z^\prime_i) = v c_n + \sum_{i=1}^{n} v(X-z_i) = vf.  \]
\end{proof}


\section{Valuation transcendental case}\label{Sect valn tr}

	Take a valued field $(K,v)$ and a valuation transcendental extension $w$ of $v$ to $K(X)$. Fix an extension $\overline{\widehat{v}}$ of $v$ to $\overline{\widehat{K}}$. Set $\widehat{v}:= \overline{\widehat{v}}|_{\widehat{K}}$ and $\overline{v}:= \overline{\widehat{v}}|_{\overline{K}}$. Take a common extension $\overline{w}$ of $w$ and $\overline{v}$ to $\overline{K}(X)$ and a pair of definition $(a,\g)$ for $w$ over $K$. Consider the extension $\overline{\widehat{w}}:= \overline{\widehat{v}}_{a,\g}$ of $\overline{\widehat{v}}$ to $\overline{\widehat{K}}(X)$. The fact that $\overline{\widehat{v}}$ extends $\overline{v}$ implies that $\overline{\widehat{w}}$ extends $\overline{w}$. Set $\widehat{w}:= \overline{\widehat{w}}|_{\widehat{K}(X)}$. Then $\widehat{w}$ is an extension of $w$ to $\widehat{K}(X)$. We will say that the extension $(\widehat{K}(X)|\widehat{K}, \widehat{w})$ is \textbf{induced} by $(K(X)|K,w)$. 
	
	\pars If $(b,\g)$ is another pair of definition for $w$ over $K$, then $\overline{v}(a-b) \geq \g$. Consequently, $\overline{\widehat{v}}_{b,\g} = \overline{\widehat{v}}_{a,\g} = \overline{\widehat{w}}$. Thus the induced extension $(\widehat{K}(X)|\widehat{K}, \widehat{w})$ is uniquely determined. For ease of notation, from now on we will denote all the valuations by the same letter $v$.
	
	\pars Let $(\overline{K}(X)|K,v)$ be an extension of valued fields and assume that $(K(X)|K,v)$ is valuation transcendental. Fix an extension of $v$ to $\overline{\widehat{K}}$ and consider the induced extension $(\widehat{K}(X)|\widehat{K},v)$. Take a pair of definition $(a,\g)$ for $v$ over $K$. The fact that the completion is an immediate extension implies that
	\[ v\overline{K} = \QQ\tensor_{\ZZ} vK = \QQ\tensor_{\ZZ} v\widehat{K} = v\overline{\widehat{K}}. \]  
	Recall that $(K(X)|K,v)$ is residue transcendental if and only if $\g\in v\overline{K}$. It follows that the induced extension $(\widehat{K}(X)|\widehat{K}, v)$ is residue (value) transcendental if and only if $(K(X)|K,v)$ is residue (value) transcendental. Further, $vK$ is cofinal in $vK(X)$ if and only if $(K(X)|K,v)$ does not admit a unique pair of definition. It follows that $v\widehat{K}$ is cofinal in $v\widehat{K}(X)$ if and only if $vK$ is cofinal in $vK(X)$.

	\subsection{$vK$ is cofinal in $vK(X)$}
	
	\begin{Theorem}\label{Thm CSKP valn tr cofinal}
		Let $(\overline{K}(X)|K,v)$ be an extension of valued fields. Assume that $(K(X)|K,v)$ is valuation transcendental and that $vK$ is cofinal in $vK(X)$. Fix an extension of $v$ to $\overline{\widehat{K}}$. Take a complete sequence of key polynomials $\{Q_\nu\}_{\nu\in\L}$ for $v$ over $K$. Then $\{Q_\nu\}_{\nu\in\L}$ forms a complete sequence of key polynomials for the induced extension $(\widehat{K}(X)|\widehat{K},v)$.
	\end{Theorem}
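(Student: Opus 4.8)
The plan is to verify conditions (CSKP1)--(CSKP3) for $\{Q_\nu\}_{\nu\in\L}$ relative to $\widehat K$. We may assume $K$ is not complete, so $vK\neq 0$. Fix a minimal pair of definition $(a,\g)$ for $v$ over $K$; then $\g=\max v(X-\overline K)$, and since $vK$ is cofinal in $vK(X)$ the extension does not admit a unique pair of definition, so (by the discussion of minimal pairs in Section \ref{Sect prelims}) $\g$ does not lie strictly above $v\overline K$, whence a short computation gives $\a_0\in vK$ with $\g<\a_0$. The induced valuation on $\widehat K(X)$ is $v_{a,\g}$ with $a\in\overline K\subseteq\overline{\widehat K}$, and since the valuation on $\overline{\widehat K}$ restricts to the one on $\overline K$, the quantity $\d(h)$ is unchanged for $h\in K[X]$. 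Moreover, for any $z\in\overline{\widehat K}$ we have $v(X-z)=\min\{\g,v(a-z)\}$ with $v(a-z)\in v\overline{\widehat K}=v\overline K$, so $v(X-z)=v(X-z')$ for a suitable $z'\in\overline K$; hence $v(X-\overline{\widehat K})=v(X-\overline K)$, so $\{\d(Q_\nu)\}_{\nu\in\L}$ is still cofinal in it and attains its maximum $\g$, say $\d(Q_{\nu_0})=\g$. A direct argument with the key polynomial property shows $\deg Q_\nu\leq[K(a):K]$ for all $\nu$ (otherwise, applying the defining property of $Q_\nu$ to the minimal polynomial of $a$ would contradict $\d(Q_\nu)\leq\g$). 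Conditions (CSKP1) and (CSKP2) over $\widehat K$ are then immediate, since the $\d$-values are unchanged.

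The heart of the matter is the claim: if $z\in\overline{\widehat K}$ satisfies $v(a-z)\geq\g$, then $[\widehat K(z):\widehat K]\geq[K(a):K]$. To see this, suppose $[\widehat K(z):\widehat K]=m<[K(a):K]$ and let $g\in\widehat K[X]$ be the minimal polynomial of $z$ over $\widehat K$. Using that $K$ is dense in $\widehat K$, approximate $g$ coefficientwise by a monic $\tilde g\in K[X]$ of degree $m$; by continuity of roots we may take $\tilde g$ so close to $g$ that, after renumbering, $g=\prod_{i=1}^m(X-z_i)$ and $\tilde g=\prod_{i=1}^m(X-\tilde z_i)$ with $v(z_i-\tilde z_i)>\a_0$ for all $i$. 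Writing $z=z_1$, the element $\tilde z_1\in\overline K$ satisfies $v(z-\tilde z_1)>\a_0>\g$, so $v(a-\tilde z_1)\geq\g$; thus $(\tilde z_1,\g)$ is a pair of definition for $v$ over $K$ with $[K(\tilde z_1):K]\leq m<[K(a):K]$, contradicting the minimality of $(a,\g)$. I expect this density-plus-continuity-of-roots step to be the main obstacle, as it is precisely what prevents the degrees of the $Q_\nu$ from dropping upon passage to $\widehat K$. (In particular, taking $z=a$ yields $[\widehat K(a):\widehat K]=[K(a):K]$.)

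It remains to run the routine bookkeeping. Fix $Q_\nu$ and $f\in\widehat K[X]$ with $\deg f<\deg Q_\nu$; since $\d(f)\in v(X-\overline K)$ we have $\d(f)\leq\g$. If $\d(f)=\g$, a root $z$ of $f$ with $v(X-z)=\g$ satisfies $v(a-z)\geq\g$, so the claim forces $\deg f\geq[\widehat K(z):\widehat K]\geq[K(a):K]\geq\deg Q_\nu$, a contradiction; hence $\d(f)<\g<\a_0$, and Lemma \ref{Lemma deg f and d(f)} supplies $f'\in K[X]$ with $\deg f'=\deg f$ and $\d(f')=\d(f)$, so $\d(f)=\d(f')<\d(Q_\nu)$ because $Q_\nu$ is a key polynomial over $K$. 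Thus every $Q_\nu$ is a key polynomial over $\widehat K$. For (CSKP3), take $f\in\widehat K[X]$: if $\d(f)=\g$ then as above $\deg f\geq[K(a):K]\geq\deg Q_{\nu_0}$, and since $\d(f)=\d(Q_{\nu_0})$, Proposition \ref{Prop vf = v_Q f iff d(f) leq d(Q)} (valid over $\widehat K$ now that $Q_{\nu_0}$ is a key polynomial there) gives $vf=v_{Q_{\nu_0}}f$; if $\d(f)<\g<\a_0$, Lemma \ref{Lemma deg f and d(f)} gives $f'\in K[X]$ with $\deg f'=\deg f$, $vf'=vf$ and $\d(f')=\d(f)$, (CSKP3) over $K$ yields $Q_\nu$ with $\deg Q_\nu\leq\deg f$ and, by Proposition \ref{Prop vf = v_Q f iff d(f) leq d(Q)}, $\d(f)\leq\d(Q_\nu)$, and a final application of Proposition \ref{Prop vf = v_Q f iff d(f) leq d(Q)} over $\widehat K$ gives $vf=v_{Q_\nu}f$. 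This establishes (CSKP3) and completes the proof.
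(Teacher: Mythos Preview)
Your proof is correct and rests on the same core mechanism as the paper's: density of $K$ in $\widehat K$ plus continuity of roots (packaged as Lemma~\ref{Lemma deg f and d(f)}) to transfer degrees and $\d$-values between $K[X]$ and $\widehat K[X]$, followed by Proposition~\ref{Prop vf = v_Q f iff d(f) leq d(Q)} to finish. The difference is that your minimal-pair machinery, the claim about $[\widehat K(z):\widehat K]\geq[K(a):K]$, and the case split on $\d(f)=\g$ versus $\d(f)<\g$ are all unnecessary: since $vK$ is cofinal in $vK(X)$ (hence in $v\widehat K(X)$), \emph{every} $\d(f)$---including the maximal value $\g$---satisfies $\d(f)<\a$ for some $\a\in vK$, so Lemma~\ref{Lemma deg f and d(f)} applies uniformly without splitting into cases. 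The paper's proof exploits this observation to run the argument in two short paragraphs with no reference to a minimal pair at all; your claim (that $(a,\g)$ remains minimal over $\widehat K$) is then obtained \emph{after the fact} as Corollary~\ref{Coro CSKP valn tr}, rather than used as an ingredient.
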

	
	\begin{proof}
		Take any $\nu\in\L$ and suppose that $Q_\nu$ is not a key polynomial for $v$ over $\widehat{K}$. Then there exists some polynomial $f(X)\in \widehat{K}[X]$ such that $\deg f < \deg Q_\nu$ and $\d(f) \geq \d(Q_\nu)$. The assumption that $vK$ is cofinal in $vK(X)$ implies that $v\widehat{K}$ is cofinal in $v\widehat{K}(X)$. Consequently, there exists some $\a\in vK$ such that $\d(f) < \a$. Applying Lemma \ref{Lemma deg f and d(f)}, we obtain a polynomial $f^\prime(X) \in K[X]$ such that $\deg f = \deg f^\prime$ and $\d(f) = \d(f^\prime)$. It follows that $\deg f^\prime < \deg Q_\nu$ and $\d(f^\prime) \geq \d(Q_\nu)$. However, this contradicts the fact that $Q_\nu$ is a key polynomial for $v$ over $K$. We have thus obtained that $Q_\nu$ is a key polynomial for $v$ over $\widehat{K}$ for all $\nu\in\L$.
		
		\pars Take any $g(X)\in \widehat{K}[X]$. It follows from Lemma \ref{Lemma deg f and d(f)} that there exists some polynomial $g^\prime(X) \in K[X]$ such that $\d(g) = \d(g^\prime)$ and $\deg g = \deg g^\prime$. The fact that $\{Q_\nu\}_{\nu\in\L}$ forms a complete sequence of key polynomials for $v$ over $K$ implies that $vg^\prime = v_{Q_\nu}g^\prime$ for some $\nu\in\L$ with $\deg Q_\nu \leq \deg g^\prime$. In light of Proposition \ref{Prop vf = v_Q f iff d(f) leq d(Q)}, we obtain that $\d(g^\prime) \leq \d(Q_\nu)$. As a consequence, $\d(g) \leq \d(Q_\nu)$. The observation that $Q_\nu$ is a key polynomial for $v$ over $\widehat{K}$ now yields that $vg = v_{Q_\nu}g$. 
	\end{proof}
	
	\begin{Corollary}\label{Coro CSKP valn tr}
		Let notations and assumptions be as in Theorem \ref{Thm CSKP valn tr cofinal}. Take a minimal pair of definition $(a,\g)$ for $v$ over $K$. Then $(a,\g)$ is also a minimal pair of definition for the induced extension $(\widehat{K}(X)|\widehat{K},v)$. Further, 
		\[ [K(a):K] = [\widehat{K}(a):\widehat{K}] \text{ and } (\widehat{K}(X)|K(X),v) \text{ is immediate}. \]
	\end{Corollary}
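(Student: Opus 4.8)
The plan is to pin down the minimal polynomial $Q_a$ of $a$ over $K$ as a key polynomial for $v$ over $\widehat{K}$ and then to exploit the fact that, with the paper's definition, a key polynomial is automatically irreducible over its base field: if $Q = Q_1Q_2$ with $Q_1,Q_2$ monic of strictly smaller degree, then the key polynomial property gives $\d(Q_1)<\d(Q)$ and $\d(Q_2)<\d(Q)$, whereas $\d(Q)=\max\{\d(Q_1),\d(Q_2)\}$ since the roots of $Q$ are those of $Q_1$ together with those of $Q_2$ -- a contradiction. So first I would invoke Lemma \ref{Lemma CSKP valn tr final element} to fix a complete sequence of key polynomials $\{Q_\nu\}_{\nu\in\L}$ for $v$ over $K$ with $Q_a$ as its final element; here $\d(Q_a)=\g$ and $a$ is a maximal root of $Q_a$, because $(a,\g)$ being a pair of definition forces $\g=\max v(X-\overline{K})=v(X-a)$. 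By Theorem \ref{Thm CSKP valn tr cofinal} this same family is a complete sequence of key polynomials for the induced extension $(\widehat{K}(X)|\widehat{K},v)$, so in particular $Q_a$ is a key polynomial for $v$ over $\widehat{K}$, hence irreducible over $\widehat{K}$, hence the minimal polynomial of $a$ over $\widehat{K}$. This yields $[\widehat{K}(a):\widehat{K}]=\deg Q_a=[K(a):K]$.

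Next I would record that the roots of $Q_a$ in $\overline{\widehat{K}}$ are exactly its $d:=\deg Q_a$ roots in $\overline{K}$ (the $K$-conjugates of $a$), and that for each of them $v(X-a_i)=\min\{\g,v(a-a_i)\}$ is given by the same formula whether computed over $K$ or over $\widehat{K}$; hence $a$ stays a maximal root of $Q_a$ for $v$ over $\widehat{K}$ and $\d(Q_a)$ is still $\g$. As in the proof of Lemma \ref{Lemma CSKP valn tr final element}, [\ref{Novacoski key poly and min pairs}, Theorem 1.1] then gives that $(a,\g)$ is a minimal pair of definition for $v$ over $\widehat{K}$; that it is a pair of definition at all for the induced valuation is immediate, the latter being $\overline{\widehat{v}}_{a,\g}$ by construction. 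This settles the first two assertions.

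For the immediacy of $(\widehat{K}(X)|K(X),v)$ I would argue directly with Lemma \ref{Lemma deg f and d(f)}, adding the observation -- plainly visible from its proof -- that the approximating polynomial $f^\prime\in K[X]$ may be chosen with $v(f-f^\prime)>vf$, so that $f/f^\prime$ has residue $1$. Given $\theta\in\widehat{K}(X)^\times$, write $\theta=f/g$ with $f,g\in\widehat{K}[X]$; since $vK$ is cofinal in $vK(X)$ one has $\d(f),\d(g)<\a$ for some $\a\in vK$ (the ingredient already used in the proof of Theorem \ref{Thm CSKP valn tr cofinal}), so Lemma \ref{Lemma deg f and d(f)} supplies suitable $f^\prime,g^\prime\in K[X]$. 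Then $v\theta=vf-vg=vf^\prime-vg^\prime\in vK(X)$, and when $v\theta=0$ one gets $\theta v=(f^\prime/g^\prime)v\in K(X)v$ because $f/f^\prime$ and $g/g^\prime$ have residue $1$. Hence $vK(X)=v\widehat{K}(X)$ and $K(X)v=\widehat{K}(X)v$.

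The hard part, such as it is, lies in the immediacy step: one must check that the coefficients of $f^\prime$ can be pushed close enough to those of $f$ to force $v(f-f^\prime)>vf$ -- not merely to control $\deg f^\prime$ and $\d(f^\prime)$ as in the stated form of Lemma \ref{Lemma deg f and d(f)} -- so that the residues of $f$ and $f^\prime$, and likewise of $g$ and $g^\prime$, genuinely coincide. The first two assertions, by contrast, amount to bookkeeping once Lemma \ref{Lemma CSKP valn tr final element}, Theorem \ref{Thm CSKP valn tr cofinal} and the irreducibility of key polynomials are combined.
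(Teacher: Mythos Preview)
Your argument for the first two assertions---that $(a,\g)$ is a minimal pair over $\widehat{K}$ and that $[K(a):K]=[\widehat{K}(a):\widehat{K}]$---is essentially the paper's own: invoke Lemma~\ref{Lemma CSKP valn tr final element} to place $Q_a$ at the end of a complete sequence, push the sequence to $\widehat{K}$ via Theorem~\ref{Thm CSKP valn tr cofinal}, deduce irreducibility of $Q_a$ over $\widehat{K}$ from the key polynomial property, and read off minimality from [\ref{Novacoski key poly and min pairs}, Theorem~1.1].

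Where you diverge is in the immediacy of $(\widehat{K}(X)|K(X),v)$. The paper does not approximate elements of $\widehat{K}(X)$ at all; instead it argues structurally. From $\widehat{K}(a)=\widehat{K(a)}$ one gets $vK(a)=v\widehat{K}(a)$ and $K(a)v=\widehat{K}(a)v$, and then [\ref{Dutta min fields implicit const fields}, Remarks~3.1 and~3.3] (which express $vK(X)$ and $K(X)v$ purely in terms of $vK(a)$, $K(a)v$ and $\g$ once a minimal pair $(a,\g)$ is fixed) immediately force $vK(X)=v\widehat{K}(X)$ and $K(X)v=\widehat{K}(X)v$. Your route, by contrast, strengthens Lemma~\ref{Lemma deg f and d(f)} to obtain $v(f-f^\prime)>vf$ and then approximates numerators and denominators separately. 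This is correct---choosing the $c_i^\prime$ with $v(c_i-c_i^\prime)$ large enough one indeed controls $v(f-f^\prime)$ termwise, and cofinality of $vK$ in $vK(X)$ supplies the needed bounds---and it has the virtue of being self-contained, avoiding the external structural remarks. It is also, in effect, a warm-up for the proof of Theorem~\ref{Thm density of K(X)} later in the paper, which carries out exactly this kind of coefficient-by-coefficient approximation in greater detail. The paper's approach is shorter given those references are available; yours trades brevity for independence from them.
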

	
	\begin{proof}
		In light of Lemma \ref{Lemma CSKP valn tr final element}, we take a complete sequence of key polynomials $\{Q_\nu\}_{\nu\in\L} \union \{Q_a\}$ for $v$ over $K$ with $Q_a$ as the final element, where $Q_a$ is the minimal polynomial of $a$ over $K$. It follows from Theorem \ref{Thm CSKP valn tr cofinal} that $\{Q_\nu\}_{\nu\in\L} \union \{Q_a\}$ also forms a complete sequence of key polynomials for the induced extension $(\widehat{K}(X)|\widehat{K},v)$ with $Q_a$ as the final element. Observe that $a$ is a maximal root of $Q_a$ and $\d(Q_a) = \g$. It then follows from [\ref{Novacoski key poly and min pairs}, Theorem 1.1] that $(a,\g)$ is a minimal pair of definition for $v$ over $\widehat{K}$. Moreover, $Q_a$ is a key polynomial for $v$ over $\widehat{K}$ and hence irreducible. As a consequence, 
		\[ [K(a):K] = \deg Q_a = [\widehat{K}(a):\widehat{K}]. \]
		The fact that $K(a)|K$ is a finite extension implies that $\widehat{K(a)} = \widehat{K}(a)$. Completion being an immediate extension implies that 
		\[ vK(a) = v\widehat{K}(a) \text{ and } K(a)v = \widehat{K}(a)v. \]
		The assertion that $(\widehat{K}(X)|K(X),v)$ is an immediate extension now follows from [\ref{Dutta min fields implicit const fields}, Remark 3.1] and [\ref{Dutta min fields implicit const fields}, Remark 3.3].   
	\end{proof}
	
	Let $(\overline{\widehat{K}}(X)|K,v)$ be an extension of valued fields. Assume that $(\widehat{K}(X)|\widehat{K},v)$ is valuation transcendental and take a pair of definition $(a,\g)$ for $v$ over $\widehat{K}$. Then $a\in \overline{\widehat{K}}$ by definition. Observe that the inclusion $K\subseteq \widehat{K}$ implies that $\overline{K} \subseteq \overline{\widehat{K}}$. Further, $\overline{K}|K$ being an algebraic extension implies that $\widehat{K} \subseteq \widehat{\overline{K}}$. The fact that the completion of an algebraically closed field is again algebraically closed implies that $\widehat{\overline{K}}$ is algebraically closed. We can now conclude from the inclusion $\widehat{K} \subseteq \widehat{\overline{K}}$ that 
	\[ \overline{K} \subseteq \overline{\widehat{K}} \subseteq \widehat{\overline{K}}. \]  
	It follows that $a\in\widehat{\overline{K}}$. Assume that $v\widehat{K}$ is cofinal in $v\widehat{K}(X)$. Then there exists some $\a\in v\widehat{K}$ such that $\g<\a$. Consequently, there exists some $a^\prime\in\overline{K}$ such that $v(a^\prime-a) > \a > \g$. Thus $(a^\prime,\g)$ is also a pair of definition for $v$ over $\widehat{K}$. The fact that $a^\prime\in\overline{K}$ implies that $(a^\prime,\g)$ is also a pair of definition for $v$ over $K$. It follows that $(\widehat{K}(X)|\widehat{K},v)$ is induced by the extension $(K(X)|K,v)$. Further, $vK$ is cofinal in $vK(X)$. It then follows from [\ref{Dutta min fields implicit const fields}, Proposition 3.6] that we can always choose a minimal pair of definition $(b,\g)$ for $v$ over $K$ such that $b\in K^\sep$. Applying Corollary \ref{Coro CSKP valn tr}, we thus obtain the following result:
	
	\begin{Proposition}\label{Prop completion valn tr induced}
		Let $(\overline{\widehat{K}}(X)|K,v)$ be an extension of valued fields. Assume that $(\widehat{K}(X)|\widehat{K})$ is valuation transcendental such that $v\widehat{K}$ is cofinal in $v\widehat{K}(X)$. Then $(\widehat{K}(X)|\widehat{K},v)$ is induced by the valuation transcendental extension $(K(X)|K,v)$. In this case, we can choose a minimal pair of definition $(a,\g)$ for $v$ over $K$ such that $a\in K^\sep$. Further, $(a,\g)$ is also a minimal pair of definition for $v$ over $\widehat{K}$, $[K(a):K] = [\widehat{K}(a):\widehat{K}]$ and $(\widehat{K}(X)|K(X),v)$ is immediate.
	\end{Proposition}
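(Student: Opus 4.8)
The plan is to establish the word ``induced'' first and then harvest the remaining conclusions from Corollary \ref{Coro CSKP valn tr}.

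First I would fix a pair of definition $(a,\g)$ for $v$ over $\widehat{K}$, so that $a\in\overline{\widehat{K}}$ and $\g=\max v(X-\widehat{K})$. The preliminary observation that makes everything work is that $a$ in fact lies in $\widehat{\overline{K}}$: from $K\subseteq\widehat{K}$ we get $\overline{K}\subseteq\overline{\widehat{K}}$, and since $\overline{K}|K$ is algebraic we have $\widehat{K}\subseteq\widehat{\overline{K}}$; as the completion of an algebraically closed field is algebraically closed, $\widehat{\overline{K}}$ is algebraically closed, whence $\overline{K}\subseteq\overline{\widehat{K}}\subseteq\widehat{\overline{K}}$. Thus $a$ is a limit in $\widehat{\overline{K}}$ of a Cauchy sequence from $\overline{K}$. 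Now I would invoke the hypothesis that $v\widehat{K}$ is cofinal in $v\widehat{K}(X)$ to pick $\a\in v\widehat{K}$ with $\g<\a$; since $a\in\widehat{\overline{K}}$ and $\a\in v\overline{K}$, there is some $a^\prime\in\overline{K}$ with $v(a-a^\prime)>\a>\g$. By the characterization of pairs of definition in [\ref{AP sur une classe}, Proposition 3], $(a^\prime,\g)$ is again a pair of definition for $v$ over $\widehat{K}$, and as $a^\prime\in\overline{K}$ it is a pair of definition for $v$ over $K$ as well. In particular $(K(X)|K,v)$ is valuation transcendental, and by the construction and uniqueness discussed at the start of Section \ref{Sect valn tr}, the restriction $(\widehat{K}(X)|\widehat{K},v)$ is exactly the extension induced by $(K(X)|K,v)$.

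Next I would transfer the cofinality hypothesis down to $K$: the completion being an immediate extension gives $vK=v\widehat{K}$, and as noted before Theorem \ref{Thm CSKP valn tr cofinal}, $v\widehat{K}$ is cofinal in $v\widehat{K}(X)$ if and only if $vK$ is cofinal in $vK(X)$; hence $vK$ is cofinal in $vK(X)$. This places us in case (ii) of the dichotomy for minimal pairs, so by [\ref{Dutta min fields implicit const fields}, Proposition 3.6] there is a minimal pair of definition $(a,\g)$ for $v$ over $K$ with $a\in K^\sep$. Corollary \ref{Coro CSKP valn tr} then applies directly and yields at once that $(a,\g)$ is a minimal pair of definition for $v$ over $\widehat{K}$, that $[K(a):K]=[\widehat{K}(a):\widehat{K}]$, and that $(\widehat{K}(X)|K(X),v)$ is immediate, which is the full content of the proposition.

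The only genuine obstacle is the middle step: one must know that the transcendental datum of $v$ can be represented over $\overline{K}$ and not merely over $\overline{\widehat{K}}$. This is precisely what the inclusion $\overline{\widehat{K}}\subseteq\widehat{\overline{K}}$ together with the cofinality of $v\widehat{K}$ in $v\widehat{K}(X)$ delivers, through the elementary perturbation of $a$ to $a^\prime$ above; once $(a^\prime,\g)$ is in hand, both the ``induced'' statement and the finer structural assertions are formal consequences of Corollary \ref{Coro CSKP valn tr}.
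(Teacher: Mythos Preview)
Your proof is correct and follows essentially the same route as the paper: the paper likewise establishes the chain $\overline{K}\subseteq\overline{\widehat{K}}\subseteq\widehat{\overline{K}}$, uses cofinality of $v\widehat{K}$ in $v\widehat{K}(X)$ to perturb a pair of definition $(a,\g)$ over $\widehat{K}$ to one $(a^\prime,\g)$ with $a^\prime\in\overline{K}$, deduces that the extension is induced, then invokes [\ref{Dutta min fields implicit const fields}, Proposition 3.6] for the separable minimal pair and Corollary \ref{Coro CSKP valn tr} for the remaining assertions. One cosmetic slip: you write $\g=\max v(X-\widehat{K})$, but a pair of definition over $\widehat{K}$ has $a\in\overline{\widehat{K}}$, so the maximum should be over $X-\overline{\widehat{K}}$; this does not affect the argument.
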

	
	\subsection{$(K(X)|K,v)$ admits a unique pair of definition} We now consider the complementary case, that is, $(K(X)|K,v)$ is a value transcendental extension with a unique pair of definition $(a,\g)$. The uniqueness of $(a,\g)$ implies that any complete sequence of key polynomials for $v$ over $K$ contains the minimal polynomial $Q_a$ of $a$ over $K$ as its final element (Lemma \ref{Lemma CSKP valn tr final element}). 
	
	\begin{Theorem}\label{Thm CSKP unique pair}
		 Let $(\overline{K}(X)|K,v)$ be an extension of valued fields. Assume that $(K(X)|K,v)$ is value transcendental with a unique pair of definition $(a,\g)$. Fix an extension of $v$ to $\overline{\widehat{K}}$. Take a complete sequence of key polynomials $\{Q_\nu\}_{\nu\in\L}\union \{Q_a\}$ for $v$ over $K$, where $Q_a$ is the minimal polynomial of $a$ over $K$. Then $\{Q_\nu \}_{\nu\in\L^\prime} \union \{ \widehat{Q}_a \}$ forms a complete sequence of key polynomials for the induced extension $(\widehat{K}(X)|\widehat{K},v)$, where $\widehat{Q}_a$ is the minimal polynomial of $a$ over $\widehat{K}$ and $\L^\prime:= \{ \nu\in\L \mid \deg Q_\nu \leq \deg \widehat{Q}_a \}$.
	\end{Theorem}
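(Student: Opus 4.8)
The plan is to check directly that $\{Q_\nu\}_{\nu\in\L^\prime}\union\{\widehat{Q}_a\}$ satisfies the conditions (CSKP1)--(CSKP3) defining a complete sequence of key polynomials for $v$ over $\widehat{K}$, where as usual all the valuations are denoted by $v$. Two structural facts from the discussion preceding the theorem will be used repeatedly. First, since the completion is immediate, $v\overline{\widehat{K}} = v\overline{K}$, and since $(K(X)|K,v)$ is value transcendental with unique pair of definition $(a,\g)$, the induced extension $(\widehat{K}(X)|\widehat{K},v)$ is value transcendental with the same unique pair of definition $(a,\g)$; in particular $\g>v\overline{\widehat{K}}$, so for every $b\in\overline{\widehat{K}}$ one has $v(X-b) = \min\{\g,v(a-b)\}$, equal to $\g$ when $b=a$ and lying in $v\overline{\widehat{K}} = v\overline{K}$ otherwise. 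Hence, for any nonzero $f\in\widehat{K}[X]$, $\d(f)\leq\g$, with equality exactly when $f(a)=0$, and $\d(f)\in v\overline{K}$ when $f(a)\neq 0$. The same holds over $K$: $\d(Q_a) = \g$ with $a$ the unique maximal root of $Q_a$, so $\d(Q_\nu)<\g$ for every $\nu\in\L$; since $Q_\nu$ is a key polynomial, $\deg Q_{\nu^\prime}<\deg Q_\nu$ would force $\d(Q_{\nu^\prime})<\d(Q_\nu)$, so the degrees are non-decreasing along the sequence and $\L^\prime$ is an initial segment of $\L$, while $\widehat{Q}_a\mid Q_a$ gives $\deg\widehat{Q}_a\leq\deg Q_a$.

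Next I would verify that every member of the family is a key polynomial for $v$ over $\widehat{K}$. For $\nu\in\L^\prime$, suppose $Q_\nu$ is not; then there is $f\in\widehat{K}[X]$ with $\deg f<\deg Q_\nu\leq\deg\widehat{Q}_a$ and $\d(f)\geq\d(Q_\nu)$. Since $\deg f<\deg\widehat{Q}_a$ we have $f(a)\neq 0$, so $\d(f)\in v\overline{K}$, hence $\d(f)<\a$ for some $\a\in vK$; Lemma \ref{Lemma deg f and d(f)} then produces $f^\prime\in K[X]$ with $\deg f^\prime = \deg f<\deg Q_\nu$ and $\d(f^\prime) = \d(f)\geq\d(Q_\nu)$, contradicting that $Q_\nu$ is a key polynomial for $v$ over $K$. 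For $\widehat{Q}_a$: it is the minimal polynomial of $a$ over $\widehat{K}$, and $(a,\g)$ is a pair of definition for $v$ over $\widehat{K}$ which is automatically minimal since it is the only one; so Lemma \ref{Lemma CSKP valn tr final element}, applied over $\widehat{K}$, yields a complete sequence of key polynomials for $v$ over $\widehat{K}$ with $\widehat{Q}_a$ as final element, whence $\widehat{Q}_a$ is in particular a key polynomial for $v$ over $\widehat{K}$, with $\d(\widehat{Q}_a) = \g$.

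Now (CSKP1) and (CSKP2) follow immediately: the $\d(Q_\nu)$ for $\nu\in\L^\prime$ are pairwise distinct and well ordered by restriction from the original sequence, and all are $<\g = \d(\widehat{Q}_a)$, so adjoining $\widehat{Q}_a$ at the top preserves both. For (CSKP3), take $g\in\widehat{K}[X]$. If $\deg g\geq\deg\widehat{Q}_a$, then $\d(g)\leq\g = \d(\widehat{Q}_a)$, so Proposition \ref{Prop vf = v_Q f iff d(f) leq d(Q)} over $\widehat{K}$ gives $vg = v_{\widehat{Q}_a}g$ and $\widehat{Q}_a$ works. If $\deg g<\deg\widehat{Q}_a$, then $g(a)\neq 0$, so $\d(g)\in v\overline{K}$ and $\d(g)<\a$ for some $\a\in vK$; Lemma \ref{Lemma deg f and d(f)} gives $g^\prime\in K[X]$ with $\deg g^\prime = \deg g$, $vg^\prime = vg$ and $\d(g^\prime) = \d(g)$. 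Applying (CSKP3) for the original sequence over $K$, we get $Q\in\{Q_\nu\}_{\nu\in\L}\union\{Q_a\}$ with $\deg Q\leq\deg g^\prime$ and $vg^\prime = v_Q g^\prime$; since $\deg g^\prime<\deg\widehat{Q}_a\leq\deg Q_a$, necessarily $Q = Q_\nu$ for some $\nu\in\L$ with $\deg Q_\nu\leq\deg g^\prime<\deg\widehat{Q}_a$, so $\nu\in\L^\prime$. By Proposition \ref{Prop vf = v_Q f iff d(f) leq d(Q)} over $K$, $vg^\prime = v_{Q_\nu}g^\prime$ gives $\d(g^\prime)\leq\d(Q_\nu)$, hence $\d(g) = \d(g^\prime)\leq\d(Q_\nu)$; since $Q_\nu$ is a key polynomial for $v$ over $\widehat{K}$, the same proposition over $\widehat{K}$ yields $vg = v_{Q_\nu}g$, with $\deg Q_\nu\leq\deg g$. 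This proves (CSKP3).

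The main obstacle is the degree bookkeeping in the last step — verifying that the key polynomial $Q$ furnished by (CSKP3) over $K$ is forced to be one of the $Q_\nu$ with $\nu\in\L^\prime$ rather than $Q_a$ — together with the underlying point that a polynomial over $\widehat{K}$ not vanishing at $a$ has $\d$ inside $v\overline{K}$, which is precisely what makes Lemma \ref{Lemma deg f and d(f)} available to descend polynomials from $\widehat{K}[X]$ to $K[X]$.
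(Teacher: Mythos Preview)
Your proof is correct and follows essentially the same approach as the paper: both hinge on the observation that any $f\in\widehat{K}[X]$ with $f(a)\neq 0$ has $\d(f)\in v\overline{K}$, which allows Lemma~\ref{Lemma deg f and d(f)} to descend from $\widehat{K}[X]$ to $K[X]$, and both then use Proposition~\ref{Prop vf = v_Q f iff d(f) leq d(Q)} to transfer the key polynomial and completeness properties. Your version is slightly more explicit in separately verifying (CSKP1)--(CSKP3) and in justifying that $\widehat{Q}_a$ is a key polynomial via Lemma~\ref{Lemma CSKP valn tr final element}, while the paper handles (CSKP3) more uniformly by noting $vg = v_{\widehat{Q}_a}g$ for all $g$; but the substance is identical.
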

	
	\begin{proof}
		Observe that $\g> v\overline{K} = v\overline{\widehat{K}}$. It follows that $(a,\g)$ is the unique pair of definition for $v$ over $\widehat{K}$. Consequently, any complete sequence of key polynomials for $v$ over $\widehat{K}$ contains $\widehat{Q}_a$ as its final element. Moreover, for any $f(X)\in\widehat{K}[X]$ such that $f(a)\neq 0$, we have that $\d(f) \in v\overline{K}$ and hence
		\[ \d(f) < \a  \text{ for some } \a\in vK.\]
		Take some $\nu\in\L^\prime$ and suppose that $Q_\nu$ is not a key polynomial for $v$ over $\widehat{K}$. Then there exists $f(X)\in \widehat{K}[X]$ with $\d(f) \geq \d(Q_\nu)$ and $\deg f < \deg Q_\nu \leq \deg \widehat{Q}_a$. It follows from our preceding discussions that $\d(f) < \a$ for some $\a\in vK$. Applying Lemma \ref{Lemma deg f and d(f)}, we obtain some $f^\prime(X) \in K[X]$ such that $\deg f^\prime = \deg f < \deg Q_\nu$ and $\d(f^\prime) = \d(f) \geq \d(Q_\nu)$, thereby contradicting the fact that $Q_\nu$ is a key polynomial for $v$ over $K$. It follows that $Q_\nu$ is a key polynomial for $v$ over $\widehat{K}$ for all $\nu\in\L^\prime$. 
		\pars It follows from Proposition \ref{Prop vf = v_Q f iff d(f) leq d(Q)} that $vg = v_{\widehat{Q}_a}g$ for all $g(X)\in\widehat{K}[X]$. Assume that $\deg g < \deg \widehat{Q}_a$. Then $\d(g) \in v\overline{K}$. In light of Lemma \ref{Lemma deg f and d(f)}, there exists some $g^\prime(X)\in K[X]$ such that $\deg g = \deg g^\prime$ and $\d(g) = \d(g^\prime)$. The fact that $\{Q_\nu\}_{\nu\in\L}\union\{Q_a\}$ forms a complete sequence of key polynomials for $v$ over $K$ implies that $vg^\prime = v_{Q_\nu} g^\prime$ for some $\nu\in\L$ with $\deg Q_\nu\leq \deg g^\prime$. As a consequence, $\deg Q_\nu \leq \deg g < \deg \widehat{Q}_a$ and hence $\nu\in\L^\prime$. Thus $Q_\nu$ is a key polynomial for $v$ over $\widehat{K}$. Further, it follows from Proposition \ref{Prop vf = v_Q f iff d(f) leq d(Q)} that $\d(g^\prime) \leq \d(Q_\nu)$. Consequently, we obtain that $\d(g)\leq \d(Q_\nu)$ and hence $vg = v_{Q_\nu}g$. We have thus proved the theorem. 
	\end{proof}

	A direct analogue of Proposition \ref{Prop completion valn tr induced} does not hold in this case. Indeed, assume that $(\overline{\widehat{K}}(X)|K,v)$ is an extension of valued fields such that $(\widehat{K}(X)|\widehat{K},v)$ is value transcendental with a unique pair of definition $(a,\g)$ over $\widehat{K}$. If $a\in\overline{K}$, then $(K(X)|K,v)$ is also value transcendental with $(a,\g)$ as the unique pair of definition and as a consequence, $(\widehat{K}(X)|\widehat{K},v)$ is induced by $(K(X)|K,v)$. Otherwise, $a\in\overline{\widehat{K}} \setminus \overline{K}$. Take any $z\in\overline{K}$. Then $v(X-z) = v(a-z) \in v\overline{K}$. Consequently, $vK(X) \subseteq v\overline{K}$ and hence $vK(X)/vK$ is a torsion group. Further, $(\widehat{K}(X)|\widehat{K},v)$ being a value transcendental extension implies that $\widehat{K}(X)v|\widehat{K}v$ is an algebraic extension. It follows that $K(X)v|Kv$ is an algebraic extension. We have thus obtained the following result:
	
	\begin{Proposition}\label{Prop valn tr induced unique pair}
		Assume that $(\overline{\widehat{K}}(X)|K,v)$ is an extension of valued fields such that $(\widehat{K}(X)|\widehat{K},v)$ is value transcendental with a unique pair of definition $(a,\g)$ over $\widehat{K}$. Then the following cases are possible:
		\sn (i) $a\in\overline{K}$. Then $(\widehat{K}(X)|\widehat{K},v)$ is induced by the value transcendental extension $(K(X)|K,v)$.
		\n (ii) $a\in\overline{\widehat{K}} \setminus \overline{K}$. Then $(K(X)|K,v)$ is valuation algebraic.
	\end{Proposition}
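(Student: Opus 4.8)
The plan is to make precise the two observations sketched just above, handling the cases $a\in\overline{K}$ and $a\notin\overline{K}$ separately and reading everything off the identity $v=\overline{\widehat{v}}_{a,\g}$ on $\overline{\widehat{K}}(X)$, where $(a,\g)$ denotes the unique pair of definition for $v$ over $\widehat{K}$. I will use repeatedly that the completion is immediate, so that $v\widehat{K}=vK$, hence $v\overline{\widehat{K}}=v\overline{K}$ and $\widehat{K}v=Kv$, and that value transcendence of $(\widehat{K}(X)|\widehat{K},v)$ is equivalent to $\g\notin v\overline{\widehat{K}}$.

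\emph{Case} (i): $a\in\overline{K}$. Since $a\in\overline{K}$, the pair $(a,\g)$ is a pair of definition for the restriction $w:=v|_{K(X)}$ over $K$, so $(K(X)|K,w)$ is valuation transcendental; and since $\g\notin v\overline{\widehat{K}}=v\overline{K}$ it is in fact value transcendental. It is moreover the unique pair of definition for $w$ over $K$: any further pair $(b,\g)$ over $K$ satisfies $v(a-b)\ge\g$, hence is a pair of definition over $\widehat{K}$ as well, forcing $b=a$. Finally I would unwind the construction of the induced valuation: taking $v|_{\overline{K}(X)}$, which equals $\overline{v}_{a,\g}$, as the common extension of $w$ and $\overline{v}$ to $\overline{K}(X)$, the induced valuation on $\widehat{K}(X)$ is by definition the restriction of $\overline{\widehat{v}}_{a,\g}$, which is precisely $v|_{\widehat{K}(X)}$. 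Thus $(\widehat{K}(X)|\widehat{K},v)$ is induced by the value transcendental extension $(K(X)|K,v)$.

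\emph{Case} (ii): $a\in\overline{\widehat{K}}\setminus\overline{K}$. Here the goal is to rule out both value and residue transcendence of $(K(X)|K,v)$. For $z\in\overline{K}$ we have $z\ne a$, so $v(X-z)=\min\{\g,v(a-z)\}$; now $v(a-z)\in v\overline{\widehat{K}}$ while $\g\notin v\overline{\widehat{K}}$, and $v(a-z)\ge\g$ would make $(z,\g)$ a pair of definition over $\widehat{K}$ distinct from $(a,\g)$, so in fact $v(a-z)<\g$ and $v(X-z)=v(a-z)\in v\overline{K}$. Factoring an arbitrary nonzero $f\in K[X]$ over $\overline{K}$ then yields $vf\in v\overline{K}$, so $vK(X)\subseteq v\overline{K}$ and $\rr vK(X)/vK=0$; hence $(K(X)|K,v)$ is not value transcendental. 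Nor is it residue transcendental: value transcendence of $(\widehat{K}(X)|\widehat{K},v)$ forces, through the Abhyankar inequality, that $\widehat{K}(X)v|\widehat{K}v$ is algebraic, and since $Kv=\widehat{K}v$ and $K(X)v\subseteq\widehat{K}(X)v$, the extension $K(X)v|Kv$ is algebraic as well. Therefore $(K(X)|K,v)$ is valuation algebraic.

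No real difficulty is involved; the proof is a matter of tracking the definitions of value and residue transcendence and of the induced extension. The one point that deserves a little care is in Case (i): one must check that the common extension $\overline{w}$ of $w$ and $\overline{v}$ to $\overline{K}(X)$ can be taken to be $v|_{\overline{K}(X)}$, and that with this choice the recipe defining the induced valuation returns $v|_{\widehat{K}(X)}$ itself, up to equivalence, rather than merely some extension of $w$.
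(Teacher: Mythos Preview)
Your proof is correct and follows essentially the same approach as the paper's, which appears in the paragraph immediately preceding the statement: both handle the two cases by reading off $v=\overline{\widehat{v}}_{a,\g}$, use $\g>v\overline{\widehat{K}}=v\overline{K}$ to get $v(X-z)=v(a-z)\in v\overline{K}$ for $z\in\overline{K}$ in Case~(ii), and deduce algebraicity of $K(X)v|Kv$ from that of $\widehat{K}(X)v|\widehat{K}v$. You supply a bit more detail than the paper does---in particular the explicit check that the induced-valuation recipe returns $v|_{\widehat{K}(X)}$ in Case~(i), and the uniqueness argument excluding $v(a-z)\ge\g$ in Case~(ii)---but the underlying argument is the same.
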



\section{Valuation algebraic case}\label{Sect valn alg}

Take a valued field $(K,v)$ and a valuation algebraic extension $w$ of $v$ to $K(X)$. Fix an extension $\overline{\widehat{v}}$ of $v$ to $\overline{\widehat{K}}$. Set $\widehat{v}:= \overline{\widehat{v}}|_{\widehat{K}}$ and $\overline{v}:= \overline{\widehat{v}}|_{\overline{K}}$. Take a common extension $\overline{w}$ of $w$ and $\overline{v}$ to $\overline{K}(X)$. Then $(\overline{K}(X)|\overline{K}, \overline{w})$ is an immediate extension [\ref{Kuh value groups residue fields rational fn fields}, Lemma 3.3]. As a consequence, the set $\overline{w}(X-\overline{K})$ does not contain a maximal element. Take sequences $\{\g_\nu\}_{\nu<\l} \subseteq \overline{w}(X-\overline{K})$ and $\{a_\nu\}_{\nu<\l} \subseteq \overline{K}$, where $\l$ is a limit ordinal, satisfying the following conditions:
\begin{align*}\label{(C1)-(C2)}
&\{\g_\nu\}_{\nu<\l} \text{ is a well-ordered cofinal subset of } \overline{w}(X-\overline{K})\tag{C1},\\
& \overline{w}(X-a_\nu) = \g_\nu \text{ for all }\nu<\l \tag{C2}.
\end{align*}
Observe that $\{a_\nu\}_{\nu<\l}$ is a pCs in $(\overline{K},\overline{v})$ with limit $X$. Suppose that $\{a_\nu\}_{\nu<\l}$ is of algebraic type. Then there exists some $l\in\overline{K}$ such that $l$ is also a limit of the sequence [\ref{Kaplansky}, Theorem 3]. Consequently, $\overline{w}(X-l) > \g_\nu$ for all $\nu<\l$, which contradicts the cofinality of $\{\g_\nu\}_{\nu<\l}$. Hence 
\[ \{a_\nu\}_{\nu<\l} \text{ is a pCs of transcendental type in } (\overline{K}, \overline{v}). \]
By definition, $\{a_\nu\}_{\nu<\l}$ is also a pCs in $(\overline{\widehat{K}}, \overline{\widehat{v}})$. We now consider the two distinct possibilities separately. 

\subsection*{Case A: $\{a_\nu\}_{\nu<\l}$ is a pCs of algebraic type in $(\overline{\widehat{K}}, \overline{\widehat{v}})$} In this case, there exists some $a\in\overline{\widehat{K}}$ which is a limit of the sequence $\{a_\nu\}_{\nu<\l}$. We first observe that $\{a_\nu\}_{\nu<\l}$ is necessarily a Cauchy sequence in $(\overline{K},\overline{v})$. Suppose that this is not the case. Then there exists some $\d\in\overline{v}\overline{K}$ such that $\g_\nu<\d$ for all $\nu<\l$. The fact that $a\in\overline{\widehat{K}}\subseteq\widehat{\overline{K}}$ implies that there exists $b\in\overline{K}$ such that $\overline{\widehat{v}}(a-b) \geq \d$. Consequently, $\overline{\widehat{v}}(a-b) > \g_\nu$ for all $\nu<\l$ and hence $b$ is a limit of $\{a_\nu\}_{\nu<\l}$, thereby contradicting the fact that $\{a_\nu\}_{\nu<\l}$ is a pCs of transcendental type in $(\overline{K},\overline{v})$.

\pars Take the ordered abelian group $(\ZZ\dirsum\overline{v}\overline{K})_{\text{lex}}$ equipped with the lexicographic order. Set $\G:= (\ZZ\dirsum\overline{v}\overline{K})_{\text{lex}}$ and embed $\overline{v}\overline{K}$ into $\G$ by setting $\a \mapsto (0,\a)$. Take $\g:= (1,0)$ and the valuation $\overline{\widehat{w}}:= \overline{\widehat{v}}_{a,\g}$ extending $\overline{\widehat{v}}$ to $\overline{\widehat{K}}(X)$. Set $\widehat{w}:= \overline{\widehat{w}}|_{\widehat{K}(X)}$. Take any $z\in\overline{K}$. By definition, $\overline{\widehat{w}}(X-z) = \overline{\widehat{v}}(a-z)$. The fact that $z$ is not a limit of $\{a_\nu\}_{\nu<\l}$ implies that the sequence $\{ \overline{v}(a_\nu -z) \}_{\nu<\l}$ is ultimately constant. Denote the ultimately constant value of the sequence as $\b$. Then there exists some $\nu<\l$ such that $\b< \g_\nu$. From the triangle inequality, it now follows that
\[ \overline{\widehat{w}}(X-z) = \overline{\widehat{w}}(X-a_\nu + a_\nu - z)  = \b. \] 
On the other hand, the fact that $\{a_\nu\}_{\nu<\l}$ is of transcendental type in $(\overline{K},\overline{v})$ with $X$ as a limit implies that $\overline{w}(X-z) = \b$ [\ref{Kaplansky}, Theorem 2]. It follows that $\overline{\widehat{w}}|_{\overline{K}(X)} = \overline{w}$ and consequently $\widehat{w}|_{K(X)} = w$.

\pars Take a complete sequence of key polynomials $\{Q_\mu\}_{\mu\in\L}$ for $w$ over $K$. Employing the same arguments as in the proof of Theorem \ref{Thm CSKP unique pair}, we observe that $\{Q_\mu\}_{\mu\in\L^\prime} \union \{\widehat{Q}_a\}$ forms a complete sequence of key polynomials for $\widehat{w}$ over $\widehat{K}$, where $\widehat{Q}_a$ is the minimal polynomial of $a$ over $\widehat{K}$ and $\L^\prime:= \{ \mu\in \L \mid \deg Q_\mu \leq \deg\widehat{Q}_a \}$.

\subsection*{Case B: $\{a_\nu\}_{\nu<\l}$ is a pCs of transcendental type in $(\overline{\widehat{K}}, \overline{\widehat{v}})$} In this case, there exists a unique extension $\overline{\widehat{w}}$ of $\overline{\widehat{v}}$ to $\overline{\widehat{K}}(X)$ such that $X$ is a limit of $\{a_\nu\}_{\nu<\l}$; moreover, the extension is immediate [\ref{Kaplansky}, Theorem 2]. It follows directly that $\overline{\widehat{w}}|_{\overline{K}(X)} = \overline{w}$. Set $\widehat{w}:= \overline{\widehat{w}}|_{\widehat{K}(X)}$. The fact that $(\overline{\widehat{K}}(X)|\overline{\widehat{K}}, \overline{\widehat{w}})$ is an immediate extension implies that $(\widehat{K}(X)|\widehat{K},\widehat{w})$ is valuation algebraic. As a consequence, $\d(f) \in \overline{v}\overline{K}$ for all $f\in\widehat{K}[X]$. Employing Lemma \ref{Lemma deg f and d(f)} and the same arguments as in the proof of Theorem \ref{Thm CSKP valn tr cofinal}, we conclude that any complete sequence of key polynomials for $w$ over $K$ also forms a complete sequence of key polynomials for $\widehat{w}$ over $\widehat{K}$.

\pars It follows from our preceding discussions that $\{a_\nu\}_{\nu<\l}$ is a Cauchy sequence in $(\overline{K},\overline{v})$ whenever $\{a_\nu\}_{\nu<\l}$ is a pCs of algebraic type in $(\overline{\widehat{K}}, \overline{\widehat{v}})$. As a consequence, $\{a_\nu\}_{\nu<\l}$ is a pCs of transcendental type in $(\overline{\widehat{K}}, \overline{\widehat{v}})$ if and only if the same holds for any other pair of sequences $\{\g^\prime_{\nu^\prime}\}_{\nu^\prime<\l^\prime} \subseteq \overline{w}(X-\overline{K})$ and $\{a^\prime_{\nu^\prime}\}_{\nu^\prime< \l^\prime} \subseteq \overline{K}$ satisfying the conditions $(C1)$ and $(C2)$. Applying the construction in [\ref{Dutta imp const fields key pols valn alg extns}, Section 5], without any loss of generality, we can now assume that $\{a_\nu\}_{\nu<\l}$ and $\{\g_\nu\}_{\nu<\l}$ satisfy the following additional condition:
\[  (a_\nu,\g_\nu) \text{ is a minimal pair of definition for $\overline{v}_{a_\nu,\g_\nu}$ over }K. \tag{C3} \]
Observe that $(\widehat{K}(X)|\widehat{K}, \overline{\widehat{v}}_{a_\nu,\g_\nu})$ is induced by $(K(X)|K,\overline{v}_{a_\nu,\g_\nu})$. The fact that $\g_\nu \in \overline{v}\overline{K}$ implies that $(a_\nu,\g_\nu)$ is not a unique pair of definition. As a consequence, it follows from Corollary \ref{Coro CSKP valn tr} that
\begin{align*}
	 \overline{v}_{a_\nu,\g_\nu} K(X) &= \overline{\widehat{v}}_{a_\nu,\g_\nu} \widehat{K}(X),\\
	K(X)\overline{v}_{a_\nu,\g_\nu} &= \widehat{K}(X)\overline{\widehat{v}}_{a_\nu,\g_\nu},\\
	(a_\nu,\g_\nu) \text{ is a minimal }&\text{pair of definition for }  \overline{\widehat{v}}_{a_\nu,\g_\nu} \text{ over } \widehat{K}.
\end{align*}
Further, $\{\g_\nu\}_{\nu<\l}$ is cofinal in $\overline{\widehat{w}}(X-\overline{\widehat{K}})$. Indeed, take any $z\in\overline{\widehat{K}}$. The fact that $(\overline{\widehat{K}}(X)|\overline{\widehat{K}}, \overline{\widehat{w}})$ is immediate implies that $\overline{\widehat{w}}(X-z) \in \overline{\widehat{v}}\overline{\widehat{K}}=\overline{v}\overline{K}$. Take some $z^\prime\in\overline{K}$ such that $\overline{\widehat{v}}(z-z^\prime) > \overline{\widehat{w}}(X-z)$. It follows from the triangle inequality that $\overline{\widehat{w}}(X-z) = \overline{w}(X-z^\prime)$. Thus $\overline{\widehat{w}}(X-\overline{\widehat{K}}) = \overline{w}(X-\overline{K})$ and hence $\{\g_\nu\}_{\nu<\l}$ is cofinal in $\overline{\widehat{w}}(X-\overline{\widehat{K}})$. In light of [\ref{APZ all valns on K(X)}, Theorem 5.1], we now obtain that
\begin{align*}
	wK(X) = \Union_{\nu<\l} \overline{v}_{a_\nu,\g_\nu} K(X) &= \Union_{\nu<\l} \overline{\widehat{v}}_{a_\nu,\g_\nu} \widehat{K}(X) = \widehat{w}\widehat{K}(X),\\
	K(X)w = \Union_{\nu<\l} K(X) \overline{v}_{a_\nu,\g_\nu} &= \Union_{\nu<\l} \widehat{K}(X) \overline{\widehat{v}}_{a_\nu,\g_\nu} = \widehat{K}(X)\widehat{w}.
\end{align*}

\begin{Remark}
	Contrary to the valuation transcendental case, it is not immediately clear why an extension $(\widehat{K}(X)|\widehat{K},\widehat{w})$ induced by a valuation algebraic extension $(K(X)|K,w)$ is defined in a unique manner. To look into this matter more closely, we take distinct sequences $\{\g^\prime_{\nu^\prime}\}_{\nu^\prime<\l^\prime} \subseteq \overline{w}(X-\overline{K})$ and $\{a^\prime_{\nu^\prime}\}_{\nu^\prime< \l^\prime} \subseteq \overline{K}$ satisfying the conditions $(C1)$ and $(C2)$. Observe that $\{a_\nu\}_{\nu<\l}$ is of transcendental type in $(\overline{\widehat{K}},\overline{\widehat{v}})$ if and only if the same holds for $\{a^\prime_{\nu^\prime}\}_{\nu^\prime< \l^\prime}$. The valuation $\overline{\widehat{w}}$ is defined uniquely in this scenario by [\ref{Kaplansky}, Theorem 2].
	
	\pars We now assume that $\{a_\nu\}_{\nu<\l}$ is a pCs of algebraic type in $(\overline{\widehat{K}},\overline{\widehat{v}})$. Take the limit $a\in\overline{\widehat{K}}$ of $\{a_\nu\}_{\nu<\l}$. By construction, the sequence $\{a^\prime_{\nu^\prime}\}_{\nu^\prime< \l^\prime}$ admits $X$ as a limit. The fact that $\overline{\widehat{w}}(X-a) > \overline{v}\overline{K}$ implies that $\overline{\widehat{w}}(X-a) > \g^\prime_{\nu^\prime}$ for all $\nu^\prime< \l^\prime$. As a consequence, $a$ is also a limit of $\{a^\prime_{\nu^\prime}\}_{\nu^\prime< \l^\prime}$. The valuation $\overline{\widehat{w}}$ is thus uniquely defined. 
\end{Remark}

We gather our findings in the next theorem. For ease of notation, we will again denote all the valuations by the same letter $v$.

\begin{Theorem}\label{Thm CSKP valn alg}
	Let $(\overline{K}(X)|K,v)$ be an extension of valued fields. Assume that $(K(X)|K,v)$ is valuation algebraic. Fix an extension of $v$ to $\overline{\widehat{K}}$. Take a cofinal well-ordered subset $\{\g_\nu\}_{\nu<\l} \subseteq v(X-\overline{K})$ and $\{a_\nu\}_{\nu<\l}\subseteq \overline{K}$ such that $v(X-a_\nu) = \g_\nu$ for all $\nu<\l$. Take a complete sequence of key polynomials $\{Q_\mu\}_{\mu\in\L}$ for $v$ over $K$. Then the following cases are possible:
	\sn (Case I.) $\{a_\nu\}_{\nu<\l}$ is a pCs of transcendental type in $(\overline{\widehat{K}},v)$. Then $\{Q_\mu\}_{\mu\in\L}$ forms a complete sequence of key polynomials for the induced extension $(\widehat{K}(X)|\widehat{K},v)$. Further, $(\widehat{K}(X)|K(X),v)$ is an immediate extension.
	\n (Case II.) $\{a_\nu\}_{\nu<\l}$ is a pCs of algebraic type in $(\overline{\widehat{K}},v)$. Then $\{a_\nu\}_{\nu<\l}$ is a Cauchy sequence in $(\overline{K},v)$. Take the unique limit $a\in\overline{\widehat{K}}$ of $\{a_\nu\}_{\nu<\l}$. Then $\{Q_\mu\}_{\mu\in\L^\prime} \union \{\widehat{Q}_a\}$ forms a complete sequence of key polynomials for for the induced extension $(\widehat{K}(X)|\widehat{K},v)$, where $\widehat{Q}_a$ is the minimal polynomial of $a$ over $\widehat{K}$ and $\L^\prime:= \{\mu\in\L \mid \deg Q_\mu \leq \deg \widehat{Q}_a\}$. Further, the element $a$ is uniquely determined, that is, it is independent of the choice of the sequences $\{\g_\nu\}_{\nu<\l}$ and $\{a_\nu\}_{\nu<\l}$. 
\end{Theorem}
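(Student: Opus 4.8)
The plan is to obtain the two cases of the statement by collecting and completing the construction carried out just before it. In each case that construction already produces a well-defined extension $\overline{\widehat{w}}$ of $\overline{\widehat{v}}$ to $\overline{\widehat{K}}(X)$ with $\overline{\widehat{w}}|_{\overline{K}(X)}=\overline{w}$, hence the induced extension $\widehat{w}:=\overline{\widehat{w}}|_{\widehat{K}(X)}$; so what remains is the statements about complete sequences of key polynomials, about the immediacy of $(\widehat{K}(X)|K(X),v)$, and about the uniqueness of $a$. One caveat to flag at the outset: the labelling of the two cases in the construction is the reverse of the one in the statement, so the transcendental-type case of the construction is Case I here and the algebraic-type case is Case II.

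When $\{a_\nu\}_{\nu<\l}$ is a pseudo-Cauchy sequence of transcendental type in $(\overline{\widehat{K}},v)$, I would argue that, by [\ref{Kaplansky}, Theorem 2], $\overline{\widehat{w}}$ is the unique immediate extension of $\overline{\widehat{v}}$ to $\overline{\widehat{K}}(X)$ for which $X$ is a limit of $\{a_\nu\}$, so $(\widehat{K}(X)|\widehat{K},\widehat{w})$ is valuation algebraic and $\d(f)\in v\overline{K}$ for every $f\in\widehat{K}[X]$. Given such an $f$, Lemma \ref{Lemma deg f and d(f)} supplies $f'\in K[X]$ of the same degree, value and $\d$, and the argument of Theorem \ref{Thm CSKP valn tr cofinal} then shows that each $Q_\mu$ is still a key polynomial over $\widehat{K}$ and that $\{Q_\mu\}_{\mu\in\L}$ remains complete. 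For the immediacy of $(\widehat{K}(X)|K(X),v)$ I would follow the preamble: after replacing the defining sequences as in [\ref{Dutta imp const fields key pols valn alg extns}, Section 5] one may assume each $(a_\nu,\g_\nu)$ is a minimal pair of definition for $\overline{v}_{a_\nu,\g_\nu}$ over $K$; since $\g_\nu\in v\overline{K}$ this pair is not unique, so Corollary \ref{Coro CSKP valn tr} gives $\overline{v}_{a_\nu,\g_\nu}K(X)=\overline{\widehat{v}}_{a_\nu,\g_\nu}\widehat{K}(X)$ and $K(X)\overline{v}_{a_\nu,\g_\nu}=\widehat{K}(X)\overline{\widehat{v}}_{a_\nu,\g_\nu}$ for each $\nu$; finally, since $\overline{\widehat{w}}$ is immediate over $\overline{\widehat{K}}$ the set $\{\g_\nu\}$ is cofinal in $\overline{\widehat{w}}(X-\overline{\widehat{K}})$, and [\ref{APZ all valns on K(X)}, Theorem 5.1] lets one pass to the union over $\nu$ to conclude $vK(X)=\widehat{w}\widehat{K}(X)$ and $K(X)v=\widehat{K}(X)\widehat{w}$.

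When $\{a_\nu\}_{\nu<\l}$ is of algebraic type in $(\overline{\widehat{K}},v)$, I would first show it is a Cauchy sequence in $(\overline{K},v)$: otherwise some $\d\in v\overline{K}$ exceeds every $\g_\nu$, and, using $\overline{\widehat{K}}\subseteq\widehat{\overline{K}}$, a limit $a\in\overline{\widehat{K}}$ of $\{a_\nu\}$ could be approximated to within $\d$ by an element of $\overline{K}$, producing a limit of $\{a_\nu\}$ in $\overline{K}$ and contradicting that $\{a_\nu\}$ is of transcendental type in $(\overline{K},v)$. Being Cauchy, $\{a_\nu\}$ has a unique limit in the completion, and $a\in\overline{\widehat{K}}$ is such a limit, so $a$ is pinned down as that limit. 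Then $\overline{\widehat{w}}=\overline{\widehat{v}}_{a,\g}$ with $\g=(1,0)\in(\ZZ\dirsum v\overline{K})_{\text{lex}}$ restricts to $\overline{w}$ and to $\widehat{w}$, and $(\widehat{K}(X)|\widehat{K},\widehat{w})$ is value transcendental with $(a,\g)$ as its unique pair of definition since $\g>v\overline{\widehat{K}}$; the argument of Theorem \ref{Thm CSKP unique pair}, together with Lemma \ref{Lemma deg f and d(f)} and the fact that $\d(f)\in v\overline{K}$ whenever $f(a)\neq0$, then shows $\{Q_\mu\}_{\mu\in\L'}\union\{\widehat{Q}_a\}$ is a complete sequence over $\widehat{K}$. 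For independence of $a$, any other pair $\{\g'_{\nu'}\}$, $\{a'_{\nu'}\}$ satisfying the same conditions is again of algebraic type in $(\overline{\widehat{K}},v)$, hence again Cauchy in $(\overline{K},v)$ with $X$ a limit, so $\overline{\widehat{w}}(X-a)>v\overline{K}\ni\g'_{\nu'}$ forces $a$ to be a limit of $\{a'_{\nu'}\}$ and therefore its unique limit; thus the same $a$ results.

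The only genuinely non-formal step is the immediacy of $(\widehat{K}(X)|K(X),v)$ in Case I: it does not follow merely from the stagewise equalities of value groups and residue fields but requires the limit statement [\ref{APZ all valns on K(X)}, Theorem 5.1], and it depends on first normalizing the defining sequences so that every pair $(a_\nu,\g_\nu)$ is minimal. The rest is essentially bookkeeping on top of the construction preceding the statement, the main pitfall being the reversed labelling of the two cases.
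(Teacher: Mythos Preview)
Your proposal is correct and follows essentially the same approach as the paper: the theorem is a summary of the construction carried out in Section~\ref{Sect valn alg}, and you have accurately traced through both cases of that construction (including the Remark on well-definedness for the uniqueness of $a$), invoking Lemma~\ref{Lemma deg f and d(f)}, the arguments of Theorems~\ref{Thm CSKP valn tr cofinal} and~\ref{Thm CSKP unique pair}, Corollary~\ref{Coro CSKP valn tr}, and [\ref{APZ all valns on K(X)}, Theorem 5.1] at the same points the paper does. Your flag about the reversed labelling of the cases is accurate, and your identification of the immediacy step in Case~I as the only substantive one matches the paper's emphasis.
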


The extension $(K(X)|K,v)$ will be said to be \textbf{valuation algebraic of type I} if $\{a_\nu\}_{\nu<\l}$ is a pCs of transcendental type in $(\overline{\widehat{K}},v)$. Otherwise, we will say that $(K(X)|K,v)$ is \textbf{valuation algebraic of type II}.

\pars We now investigate whether we can modify Proposition \ref{Prop valn tr induced unique pair}. Assume that $(\overline{\widehat{K}}(X)|K,v)$ is an extension of valued fields such that $(\widehat{K}(X)|\widehat{K},v)$ is value transcendental with a unique pair of definition $(a,\g)$. It follows from [\ref{Dutta min fields implicit const fields}, Remark 3.3] that $v\overline{\widehat{K}}(X) = \ZZ\g\dirsum v\overline{K}$. Further, the fact that $(a,\g)$ is the unique pair of definition implies that $\g>v\overline{K}$. We now make use of the following general fact: let $\ZZ\g\dirsum G$ be an ordered abelian group where $\g> G$. Take the ordered abelian group $(\ZZ\dirsum G)_{\text{lex}}$ and embed $G$ in $(\ZZ\dirsum G)_{\text{lex}}$ by setting $\a \mapsto (0,\a)$ for all $\a\in G$. Then,
\begin{align*}
	\phi: \ZZ\g\dirsum G &\longrightarrow (\ZZ\dirsum G)_{\text{lex}},\\
	\g &\mapsto (1,0),\\
	\a &\mapsto (0,\a) \text{ for all }\a\in G,
\end{align*}
is an order preserving isomorphism. Recall that two valuations $v_1$ and $v_2$ on a field $K$ are said to be \textbf{equivalent} if there is an order preserving isomorphism $\phi: v_1 K \iso v_2 K$ such that $v_2 = \phi\circ v_1$. 

\pars After identifying equivalent valuations, we can thus assume that $v\overline{\widehat{K}}(X) = (\ZZ\dirsum v\overline{K})_{\text{lex}}$, where $\g = (1,0)$ and $\a \mapsto (0,\a)$ for all $\a\in v\overline{K}$. Further, assume that $a\in\overline{\widehat{K}}\setminus\overline{K}$. In light of Proposition \ref{Prop valn tr induced unique pair}, $(K(X)|K,v)$ is valuation algebraic and hence $(\overline{K}(X)|\overline{K},v)$ is immediate. Take sequences $\{\g_\nu\}_{\nu<\l}\subseteq v(X-\overline{K})$ and $\{a_\nu\}_{\nu<\l} \subseteq \overline{K}$ satisfying conditions $(C1)$ and $(C2)$. $X$ is a limit of $\{a_\nu\}_{\nu<\l}$ by construction. The fact that $v(X-a)=\g>\g_\nu$ for all $\nu<\l$ implies that $a$ is also a limit of $\{a_\nu\}_{\nu<\l}$. We have thus obtained the following result:

\begin{Proposition}\label{Prop valn tr induced unique pair two cases}
	Assume that $(\overline{\widehat{K}}(X)|K,v)$ is an extension of valued fields such that $(\widehat{K}(X)|\widehat{K},v)$ is valuation transcendental with a unique pair of definition $(a,\g)$. After identifying equivalent valuations, assume that $v\overline{\widehat{K}}(X) = (\ZZ\dirsum v\overline{K})_{\text{lex}}$, where $\g = (1,0)$ and $\a \mapsto (0,\a)$ for all $\a\in v\overline{K}$. Then the following cases are possible:
	\sn (i) $a\in\overline{K}$. Then $(\widehat{K}(X)|\widehat{K},v)$ is induced by the valuation transcendental extension $(K(X)|K,v)$.
	\n (ii) $a\in\overline{\widehat{K}}\setminus\overline{K}$. Then $(\widehat{K}(X)|\widehat{K},v)$ is induced by the valuation algebraic extension $(K(X)|K,v)$. Take a cofinal well-ordered subset $\{\g_\nu\}_{\nu<\l}$ of $v(X-\overline{K})$ and $\{a_\nu\}_{\nu<\l} \subseteq \overline{K}$ such that $v(X-a_\nu) = \g_\nu$ for all $\nu<\l$. Then $\{a_\nu\}_{\nu<\l}$ is a Cauchy sequence in $(\overline{K},v)$ with $a$ as a limit.
\end{Proposition}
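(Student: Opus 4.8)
The plan is to settle each of the two cases by matching the abstractly given valuation $v$ on $\overline{\widehat{K}}(X)$ with the explicitly constructed induced valuation — the one built in Section~\ref{Sect valn tr} for case (i) and the one built in Section~\ref{Sect valn alg} for case (ii). Write $\overline{\widehat{v}}:=v|_{\overline{\widehat{K}}}$, $\overline{v}:=v|_{\overline{K}}$, $\overline{w}:=v|_{\overline{K}(X)}$ and $w:=v|_{K(X)}$. Since $(a,\g)$ is the pair of definition for $v$ over $\widehat{K}$ in the given extension, we have $v=\overline{\widehat{v}}_{a,\g}$ on $\overline{\widehat{K}}(X)$, and the uniqueness of the pair forces $\g>v\overline{\widehat{K}}=v\overline{K}$ (the completion being immediate). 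Throughout I use the identification of equivalent valuations recorded just before the statement, so that $\g=(1,0)$ inside $(\ZZ\dirsum v\overline{K})_{\text{lex}}$.

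Case (i), $a\in\overline{K}$. Here the Hasse-derivative formula for $v_{a,\g}$ involves only $a$ and $\g$, so $\overline{w}=\overline{\widehat{v}}_{a,\g}|_{\overline{K}(X)}=\overline{v}_{a,\g}$; thus $(a,\g)$ is a pair of definition for $w$ over $K$, and since $\g\notin v\overline{K}$ the extension $(K(X)|K,w)$ is value transcendental. Feeding this pair into the construction of the induced valuation in Section~\ref{Sect valn tr} produces exactly $\overline{\widehat{v}}_{a,\g}|_{\widehat{K}(X)}=v|_{\widehat{K}(X)}$, which is (i).

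Case (ii), $a\in\overline{\widehat{K}}\setminus\overline{K}$. First, Proposition~\ref{Prop valn tr induced unique pair}(ii) gives that $(K(X)|K,w)$ is valuation algebraic, so $(\overline{K}(X)|\overline{K},\overline{w})$ is immediate and $\overline{w}(X-\overline{K})$ has no maximal element. Fix any cofinal well-ordered $\{\g_\nu\}_{\nu<\l}\subseteq v(X-\overline{K})$ and $\{a_\nu\}_{\nu<\l}\subseteq\overline{K}$ with $v(X-a_\nu)=\g_\nu$. The key computation is that, for every $\nu$, the elements $a\neq a_\nu$ lie in $\overline{\widehat{K}}$, so $v(a-a_\nu)\in v\overline{\widehat{K}}=v\overline{K}<\g$, whence $\g_\nu=v(X-a_\nu)=\overline{\widehat{v}}_{a,\g}(X-a_\nu)=\min\{\g,v(a-a_\nu)\}=v(a-a_\nu)$; hence $a$ is a limit of $\{a_\nu\}$ and $\{a_\nu\}$ is a pCs of algebraic type in $(\overline{\widehat{K}},\overline{\widehat{v}})$. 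Next I show $\{a_\nu\}$ is a Cauchy sequence in $(\overline{K},\overline{v})$: recall from the opening of Section~\ref{Sect valn alg} that $\{a_\nu\}$ is a pCs of transcendental type in $(\overline{K},\overline{v})$, hence has no limit in $\overline{K}$; were $\{\g_\nu\}$ bounded above by some $\delta\in v\overline{K}$, then, using $a\in\overline{\widehat{K}}\subseteq\widehat{\overline{K}}$, pick $b\in\overline{K}$ with $v(a-b)\geq\delta>\g_\nu$ for all $\nu$, and the triangle inequality gives $v(b-a_\nu)=v(a-a_\nu)=\g_\nu$, making $b$ a limit of $\{a_\nu\}$ in $\overline{K}$ — a contradiction. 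Thus $\{\g_\nu\}$ is cofinal in $v\overline{K}$, so $\{a_\nu\}$ is Cauchy and (by the previous sentence) has $a$ as its unique limit. Finally, since $\{a_\nu\}$ is of algebraic type in $(\overline{\widehat{K}},\overline{\widehat{v}})$, the induced valuation built in Case I of Section~\ref{Sect valn alg} is $\overline{\widehat{v}}_{a,(1,0)}$ — any two limits in $\overline{\widehat{K}}$ of $\{a_\nu\}$ differ by more than all the $\g_\nu$, hence by more than $v\overline{K}$, so they coincide, which is why $a$ may be taken as the limit — and under our identification this is exactly $v$ on $\overline{\widehat{K}}(X)$; restricting to $\widehat{K}(X)$ gives that $(\widehat{K}(X)|\widehat{K},v)$ is induced by the valuation algebraic extension $(K(X)|K,w)$, completing case (ii).

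The bookkeeping that none of this depends on the chosen cofinal data $\{\g_\nu\},\{a_\nu\}$ is already supplied by the uniqueness recorded in the Remark following Case~II of Section~\ref{Sect valn alg}. The main obstacle is the matching step in case (ii): one must verify that the element $a$ furnished by the hypothesis is genuinely a limit of every admissible sequence $\{a_\nu\}$ — this is the computation $v(X-a_\nu)=v(a-a_\nu)$ — and then reconcile the value groups, which is where the order isomorphism $\g\mapsto(1,0)$ is used; once these are in place, the explicit constructions of Sections~\ref{Sect valn tr} and~\ref{Sect valn alg} do the rest, and case (i) is routine by comparison.
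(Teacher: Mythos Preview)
Your proof is correct and follows essentially the same approach as the paper. The paper's argument is the discussion immediately preceding the proposition: case (i) is handled via Proposition~\ref{Prop valn tr induced unique pair}, and for case (ii) one observes $v(a-a_\nu)=\g_\nu$ (since $v(X-a)>\g_\nu$), invokes the Cauchy argument from the beginning of Section~\ref{Sect valn alg} (density of $\overline{K}$ in $\widehat{\overline{K}}$ plus transcendental type over $\overline{K}$), and then matches with the Case~I construction there --- all of which you reproduce, in somewhat more explicit detail.
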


It is left to check whether any valuation algebraic extension $(\widehat{K}(X)|\widehat{K},v)$ is again induced by the subextension $(K(X)|K,v)$.

\begin{Proposition}\label{Prop valn alg induced}
	Assume that $(\overline{\widehat{K}}(X)|K,v)$ is an extension of valued fields such that $(\widehat{K}(X)|\widehat{K},v)$ is valuation algebraic. Then $(\widehat{K}(X)|\widehat{K},v)$ is induced by the valuation algebraic extension $(K(X)|K,v)$. Moreover, the extension $(\widehat{K}(X)|K(X),v)$ is immediate.
\end{Proposition}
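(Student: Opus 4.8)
The first step is to check that $(K(X)|K,v)$ is itself valuation algebraic, so that the construction of Section~\ref{Sect valn alg} applies to it. Since $\overline{\widehat{K}}|\widehat{K}$ is algebraic and $(\widehat{K}(X)|\widehat{K},v)$ is valuation algebraic, [\ref{Kuh value groups residue fields rational fn fields}, Lemma~3.3] shows that $(\overline{\widehat{K}}(X)|\overline{\widehat{K}},v)$ is valuation algebraic, and since $\overline{\widehat{K}}$ is algebraically closed, [\ref{Kuh value groups residue fields rational fn fields}, Theorem~3.11] (exactly as invoked at the start of Section~\ref{Sect valn alg}) shows that $(\overline{\widehat{K}}(X)|\overline{\widehat{K}},v)$ is immediate, i.e.\ $v\overline{\widehat{K}}(X)=v\overline{\widehat{K}}$ and $\overline{\widehat{K}}(X)v=\overline{\widehat{K}}v$. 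Recalling from Section~\ref{Sect valn tr} the chain $\overline{K}\subseteq\overline{\widehat{K}}\subseteq\widehat{\overline{K}}$ and using that $\widehat{\overline{K}}|\overline{K}$ is immediate, we get $v\overline{\widehat{K}}=v\overline{K}$ and $\overline{\widehat{K}}v=\overline{K}v$; as $\overline{K}\subseteq\overline{K}(X)\subseteq\overline{\widehat{K}}(X)$, it follows that $v\overline{K}(X)=v\overline{K}$ and $\overline{K}(X)v=\overline{K}v$, so that $(\overline{K}(X)|\overline{K},v)$ is immediate, in particular valuation algebraic. A second application of [\ref{Kuh value groups residue fields rational fn fields}, Lemma~3.3], this time to the algebraic extension $\overline{K}|K$, then gives that $(K(X)|K,v)$ is valuation algebraic.

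Next I would exhibit $(\widehat{K}(X)|\widehat{K},v)$ as the valuation induced by $(K(X)|K,v)$. Take as the data of Section~\ref{Sect valn alg} the restrictions $\overline{\widehat{v}}:=v|_{\overline{\widehat{K}}}$, $\overline{v}:=v|_{\overline{K}}$, $w:=v|_{K(X)}$ and $\overline{w}:=v|_{\overline{K}(X)}$; since $\overline{w}$ restricts to $w$ on $K(X)$ and to $\overline{v}$ on $\overline{K}$, this is a legitimate choice. As $(\overline{K}(X)|\overline{K},v)$ is immediate, $\overline{w}(X-\overline{K})=v(X-\overline{K})$ has no maximal element; pick sequences $\{\g_\nu\}_{\nu<\l}$ and $\{a_\nu\}_{\nu<\l}\subseteq\overline{K}$ satisfying $(C1)$ and $(C2)$, so that $v(X-a_\nu)=\g_\nu$ for all $\nu<\l$, i.e.\ $X$ is a limit of the pCs $\{a_\nu\}_{\nu<\l}$ in $(\overline{\widehat{K}}(X),v)$. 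I claim $\{a_\nu\}_{\nu<\l}$ is of transcendental type in $(\overline{\widehat{K}},\overline{\widehat{v}})$. Indeed, if it were of algebraic type it would have a limit $a\in\overline{\widehat{K}}$, and then, by the argument of Section~\ref{Sect valn alg}, it would be a Cauchy sequence in $(\overline{K},\overline{v})$, so $\{\g_\nu\}_{\nu<\l}$ would be cofinal in $v\overline{K}=v\overline{\widehat{K}}$; but $v(X-a)>\g_\nu$ for all $\nu<\l$ (both $X$ and $a$ being limits of $\{a_\nu\}_{\nu<\l}$), whence $v(X-a)>v\overline{\widehat{K}}$, contradicting the immediacy of $(\overline{\widehat{K}}(X)|\overline{\widehat{K}},v)$. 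Thus $\{a_\nu\}_{\nu<\l}$ is of transcendental type in $(\overline{\widehat{K}},\overline{\widehat{v}})$, which is exactly the situation in which the induced valuation $\overline{\widehat{w}}$ on $\overline{\widehat{K}}(X)$ is defined in Section~\ref{Sect valn alg} as \emph{the} unique extension of $\overline{\widehat{v}}$ having $X$ as a limit of $\{a_\nu\}_{\nu<\l}$ ([\ref{Kaplansky}, Theorem~2]). Since $v|_{\overline{\widehat{K}}(X)}$ is also an extension of $\overline{\widehat{v}}$ with $X$ as a limit of $\{a_\nu\}_{\nu<\l}$, uniqueness forces $v|_{\overline{\widehat{K}}(X)}=\overline{\widehat{w}}$, hence $v|_{\widehat{K}(X)}=\widehat{w}$; that is, $(\widehat{K}(X)|\widehat{K},v)$ is induced by the valuation algebraic extension $(K(X)|K,v)$.

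It remains to record that $(\widehat{K}(X)|K(X),v)$ is immediate. Since we have shown $\{a_\nu\}_{\nu<\l}$ is of transcendental type in $(\overline{\widehat{K}},\overline{\widehat{v}})$, i.e.\ that $(K(X)|K,v)$ is valuation algebraic of type~I, this is part of Theorem~\ref{Thm CSKP valn alg}, and it is also visible from the equalities $wK(X)=\widehat{w}\widehat{K}(X)$ and $K(X)w=\widehat{K}(X)\widehat{w}$ derived during the construction in Section~\ref{Sect valn alg}. The step I expect to be the main obstacle is the bookkeeping in the first two paragraphs: one must propagate the property of being valuation algebraic carefully through the fields $K$, $\overline{K}$, $\widehat{K}$ and $\overline{\widehat{K}}$, using that $\overline{K}|K$ and $\overline{\widehat{K}}|\widehat{K}$ are algebraic (so [\ref{Kuh value groups residue fields rational fn fields}, Lemma~3.3] may be applied there) while the completion steps are immediate — the delicate point being precisely that $\widehat{K}|K$ is transcendental, so that $K(X)$ cannot be compared with $\widehat{K}(X)$ directly but only after passing to the algebraic closures.
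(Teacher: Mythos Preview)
Your proof is correct and follows essentially the same strategy as the paper's: show $(K(X)|K,v)$ is valuation algebraic, take the pCs $\{a_\nu\}$, argue by contradiction (using immediacy of $(\overline{\widehat{K}}(X)|\overline{\widehat{K}},v)$) that it is of transcendental type over $\overline{\widehat{K}}$, and then apply the uniqueness in [\ref{Kaplansky}, Theorem~2] and Theorem~\ref{Thm CSKP valn alg}. The only cosmetic differences are that the paper shows $(K(X)|K,v)$ is valuation algebraic by the direct torsion argument on $vK\subseteq vK(X)\subseteq v\widehat{K}(X)$ rather than via Lemma~3.3, and in the contradiction step it picks $a'\in\overline{K}$ with $v(a-a')\geq v(X-a)$ directly rather than first invoking the Cauchy-sequence argument from earlier in Section~\ref{Sect valn alg}.
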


\begin{proof}
	Consider the chain $v\widehat{K} = vK \subseteq vK(X) \subseteq v\widehat{K}(X)$. The fact that $v\widehat{K}(X)/v\widehat{K}$ is a torsion group implies that $vK(X)/vK$ is a torsion group. Similarly, $K(X)v|Kv$ is an algebraic extension. Take sequences $\{\g_\nu\}_{\nu<\l}\subseteq v(X-\overline{K})$ and $\{a_\nu\}_{\nu<\l}\subseteq \overline{K}$ satisfying the conditions $(C1)$ and $(C2)$. Then $\{a_\nu\}_{\nu<\l}$ is a pCs of transcendental type in $(\overline{K},v)$ with limit $X$. Suppose that $\{a_\nu\}_{\nu<\l}$ is of algebraic type in $(\overline{\widehat{K}},v)$ and take the limit $a\in\overline{\widehat{K}}$. Set $\g:= v(X-a)$. Then $\g>\g_\nu$ for all $\nu<\l$. The fact that $(\overline{\widehat{K}}(X)|\overline{\widehat{K}},v)$ is an immediate extension implies that $\g\in v\overline{\widehat{K}} = v\overline{K}$. Consequently, we can choose $a^\prime\in\overline{K}$ such that $v(a-a^\prime) \geq \g$, whence $a^\prime$ is a limit of $\{a_\nu\}_{\nu<\l}$, thereby yielding a contradiction. It follows that $\{a_\nu\}_{\nu<\l}$ is a pCs of transcendental type in $(\overline{\widehat{K}},v)$. Since $X$ is a limit of $\{a_\nu\}_{\nu<\l}$, we obtain that $(\widehat{K}(X)|\widehat{K},v)$ is induced by $(K(X)|K,v)$ from the uniqueness assertion of [\ref{Kaplansky}, Theorem 2]. The final assertion of the proposition now follows from Theorem \ref{Thm CSKP valn alg}.
\end{proof}


\section{Proofs of Theorem \ref{Thm unique common extn} and Theorem \ref{Thm connection between distinct induced valns}}
 
 We first give a \textbf{proof of Theorem \ref{Thm unique common extn}}:
\begin{proof}
	Let $\overline{\widehat{w}}$ and $\overline{\widehat{w}}_1$ be two distinct common extensions of $\overline{w}$ and $\overline{\widehat{v}}$ to $\overline{\widehat{K}}(X)$. Set $\widehat{w}_1:= \overline{\widehat{w}_1}|_{\widehat{K}(X)}$ and $\widehat{w}:= \overline{\widehat{w}}|_{\widehat{K}(X)}$. In light of Proposition \ref{Prop completion valn tr induced}, Proposition \ref{Prop valn tr induced unique pair two cases} and Proposition \ref{Prop valn alg induced}, we obtain that both $(\widehat{K}(X)|\widehat{K},\widehat{w})$ and $(\widehat{K}(X)|\widehat{K},\widehat{w}_1)$ are induced by $(K(X)|K,w)$, $\overline{\widehat{v}}$ and $\overline{w}$, up to identification of equivalent valuations. The fact that the induced valuation is defined in a unique manner then implies that $\widehat{w}_1 = \widehat{w}$. 
\end{proof}

We now give a \textbf{proof of Theorem \ref{Thm connection between distinct induced valns}}:

\begin{proof}
	Fix an extension $\widehat{\overline{v}}$ of $\overline{\widehat{v}}$ to $\widehat{\overline{K}}$. The fact that $\overline{w}$ and $\overline{w}_1$ are extensions of $w$ to $\overline{K}(X)$ implies that there exists some $\overline{\s} \in \Gal (\overline{K}(X)|K(X))$ such that $\overline{w} = \overline{w}_1 \circ \overline{\s}$ on $\overline{K}(X)$. Set $\s := \overline{\s}|_{\overline{K}}$. Then $\s\in \Gal(\overline{K}|K)$. Observe that $\overline{w}|_{\overline{K}} = \overline{v} = \overline{w}_1|_{\overline{K}}$. It follows that $\overline{v}c = \overline{w}c = \overline{w}_1 \s c = \overline{v} \s c$ for all $c\in \overline{K}$, that is, 
	\[ \overline{v} = \overline{v}\circ\s \text{ on } \overline{K}. \]
	In other words, $\s$ is a valuation preserving isomorphism of $\overline{K}$ over $K$. By continuity, we can lift $\s$ to a valuation preserving isomorphism $\widehat{\s}$ of $\widehat{\overline{K}}$ over $\widehat{K}$, that is, $\widehat{\s}\in \Gal (\widehat{\overline{K}}|\widehat{K})$ such that
	\begin{align*}
		\widehat{\overline{v}}\circ\widehat{\s} &= \widehat{\overline{v}} \text{ on } \widehat{\overline{K}},\\
		\widehat{\s}|_{\overline{K}} &= \s.
	\end{align*}

\pars We first assume that $(\widehat{K}(X)|\widehat{K},\widehat{w})$ is valuation transcendental with a minimal pair of definition $(a,\g)$ over $\widehat{K}$, where $a\in\overline{K}$. In light of Proposition \ref{Prop completion valn tr induced} and Proposition \ref{Prop valn tr induced unique pair two cases}, we observe that $(K(X)|K,w)$ is valuation transcendental with $(a,\g)$ as a minimal pair of definition for $\overline{w}$ over $K$. Since the extension induced by a valuation transcendental extension is again valuation transcendental, it follows that $(\widehat{K}(X)|\widehat{K},\widehat{w}_1)$ is also valuation transcendental. Observe that $\overline{w}(X-z) = \overline{w}_1 (X-\s z)$ for all $z\in\overline{K}$. Consequently, $\overline{w}(X-\overline{K}) = \overline{w}_1 (X-\overline{K})$. Now,
\[ \g = \overline{w}(X-a) = \overline{w}_1 (X-\s a). \]
It follows that $\g = \max \overline{w}_1 (X-\overline{K})$ and hence $(\s a,\g)$ is a pair of definition for $\overline{w}_1$ over $K$. Since $\overline{w}$ and $\overline{w}_1$ are extensions of $w$, it now follows from [\ref{Dutta min fields implicit const fields}, Remark 3.14] that $(\s a,\g)$ is a minimal pair of definition for $\overline{w}_1$ over $K$. By Corollary \ref{Coro CSKP valn tr}, $(\s a,\g)$ is a minimal pair of definition for $\widehat{w}_1$ over $\widehat{K}$. Note that $a$ and $a_1 := \s a$ are conjugates over $K$. Further, the fact that $a$ is algebraic over $\widehat{K}$ implies that $a_1 = \s a = \widehat{\s} a$ is also a conjugate of $a$ over $\widehat{K}$. 

\pars We now assume that $(\widehat{K}(X)|\widehat{K},\widehat{w})$ is valuation transcendental such that every minimal pair of definition $(a,\g)$ over $\widehat{K}$ satisfies $a\in\overline{\widehat{K}} \setminus \overline{K}$. In light of Proposition \ref{Prop completion valn tr induced}, we obtain that $(a,\g)$ is the unique pair of definition for $\widehat{w}$ over $\widehat{K}$. It then follows from Proposition \ref{Prop valn tr induced unique pair two cases} that $(K(X)|K,w)$ is valuation algebraic. Take a cofinal set $\{\g_\nu\}_{\nu<\l} \subseteq \overline{w}(X-\overline{K})$ and $\{a_\nu\}_{\nu<\l} \subseteq \overline{K}$ such that $\overline{w}(X-a_\nu) = \g_\nu$. Then $\{a_\nu\}_{\nu<\l}$ is a Cauchy sequence in $(\overline{K},\overline{v})$ and has $a$ as the unique limit in $\widehat{\overline{K}}$. The observation $\overline{w}(X-\overline{K}) = \overline{w}_1 (X-\overline{K})$ implies that $\{\g_\nu\}_{\nu<\l}$ is cofinal in $\overline{w}_1 (X-\overline{K})$. Further, the fact that $\widehat{\overline{v}}\circ\widehat{\s} = \widehat{\overline{v}}$ implies that 
\[ \g_\nu = \widehat{\overline{v}}(a-a_\nu) = \widehat{\overline{v}}(\widehat{\s} a-\s a_\nu) \text{ for all } \nu<\l.  \]     
Thus $\{\s a_\nu\}_{\nu<\l}$ is a Cauchy sequence in $(\overline{K},\overline{v})$ with the unique limit $a_1:= \widehat{\s}a$ in $\widehat{\overline{K}}$. The fact that $a\in\overline{\widehat{K}}$ then implies that $a_1\in \overline{\widehat{K}}$. By construction of the induced valuation, we observe that $(\widehat{K}(X)|\widehat{K},\widehat{w}_1)$ is valuation transcendental with the unique pair of definition $(a_1,\g)$.

\pars Finally, we assume that $(\widehat{K}(X)|\widehat{K},\widehat{w})$ is valuation algebraic. It follows from Proposition \ref{Prop valn alg induced} that $(K(X)|K,w)$ is valuation algebraic. Take sequences $\{\g_\nu\}_{\nu<\l}$ and $\{a_\nu\}_{\nu<\l}$ as above. In light of Theorem \ref{Thm CSKP valn alg}, $\{a_\nu\}_{\nu<\l}$ is a pCs of transcendental type in $(\overline{\widehat{K}},\overline{\widehat{v}})$. Consider the sequence $\{\s a_\nu\}_{\nu<\l}$ and suppose that it has the limit $b\in \overline{\widehat{K}}$. The fact that $\widehat{\overline{v}}\circ\widehat{\s} = \widehat{\overline{v}}$ then implies that $\widehat{\s}^{-1} b \in \overline{\widehat{K}}$ is a limit of $\{a_\nu\}_{\nu<\l}$, thereby contradicting the fact that $\{a_\nu\}_{\nu<\l}$ is a pCs of transcendental type in $(\overline{\widehat{K}},\overline{\widehat{v}})$. As a consequence, $\{\s a_\nu\}_{\nu<\l}$ is also a pCs of transcendental type in $(\overline{\widehat{K}},\overline{\widehat{v}})$. By construction, $ (\widehat{K}(X)|\widehat{K},\widehat{w}_1)$ is valuation algebraic.   

\pars The proof of the theorem is now completed in view of the symmetry of the preceding arguments.
\end{proof}


\section{Examples}

    Take the valued field $K:= k(t)$ equipped with the $t$-adic valuation $v:= v_t$. Then $vK=\ZZ$, $Kv=k$ and $\widehat{K} = k((t))$. Fix an extension of $v$ to $\widehat{\overline{K}}$. Take the ordered abelian group $(\ZZ\dirsum\QQ)_{\text{lex}}$ equipped with the lexicographic order. Embed $v\overline{K}=\QQ$ in $(\ZZ\dirsum\QQ)_{\text{lex}}$ by setting $\a\mapsto (0,\a)$ for all $\a\in v\overline{K}$. Set $\g:= (1,0)$. Then $\g > v\overline{K}$.
	
\begin{Example}
	Assume that $\ch k=p>0$. Set 
	\[  a:= \sum_{n=0}^{\infty} t^{p^n}. \]
	Then $a\in\widehat{K}\setminus K$. Observe that $a$ satisfies the Artin-Schreier polynomial $X^p-X+t \in K[X]$. Hence $a\in\overline{K}$. An Artin-Schreier polynomial either splits completely or is irreducible. The fact that $a\notin K$ implies that $X^p-X+t$ is the minimal polynomial of $a$ over $K$.
	
	\pars Take the value transcendental extension of $v$ to $K(X)$ which admits $(a,\g)$ as a pair of definition. The fact that $\g>vK$ implies that $(a,\g)$ is the unique pair of definition. For each $m\in\NN$, set $a_m:= \sum_{n=0}^{m} t^{p^n}$. Then $Q_m:= X-a_m \in K[X]$ is a monic linear polynomial over $K$ and hence is a key polynomial for $v$ over $K$. Observe that \[  \d(Q_m) = v(X-a_m) = v(a-a_m) = v(\sum_{n=m+1}^{\infty} t^{p^n}) = p^{m+1}. \]
	Hence $\{ \d(Q_m) \}_{m\in\NN}$ is cofinal in $vK$ and consequently in $v\overline{K}$. It now follows from Proposition \ref{Prop vf = v_Q f iff d(f) leq d(Q)} that for any polynomial $f(X)\in K[X]$ with $\deg f < p$, we have $vf = v_{Q_m}f$ for some $m\in\NN$. As a consequence, $\{ X-a_m \}_{m\in\NN} \Union \{ X^p-X+t\}$ forms a complete sequence of key polynomials for $v$ over $K$. We then obtain from Theorem \ref{Thm CSKP unique pair} that $\{ X-a_m \}_{m\in\NN} \Union \{ X-a\}$ forms a complete sequence of key polynomials for $v$ over $\widehat{K}$.
\end{Example}

\begin{Example}
	Assume that $\ch k=0$. Set
	\[ a:= \sum_{n=0}^{\infty} \frac{t^n}{n!}. \]
	Then $a\in\widehat{K}$ is transcendental over $K$. Take the value transcendental extension of $v$ to $\widehat{K}(X)$ which admits $(a,\g)$ as a pair of definition. The fact that $\g>v\widehat{K}$ implies that $(a,\g)$ is the unique pair of definition. It follows from Proposition \ref{Prop valn tr induced unique pair two cases} that $(\widehat{K}(X)|\widehat{K},v)$ is induced by the valuation algebraic extension $(K(X)|K,v)$. Hence $(K(X)|K,v)$ is valuation algebraic of type II.
	
	\pars Set $a_m:= \sum_{n=0}^{m} \frac{t^n}{n!}$ and $Q_m:= X-a_m \in K[X]$. Then $\d(Q_m) = m+1$ and hence $\{\d(Q_m)\}_{m\in\NN}$ is cofinal in $v\overline{K}$. It follows from Proposition \ref{Prop vf = v_Q f iff d(f) leq d(Q)}  that $\{X-a_m\}_{m\in\NN}$ forms a complete sequence of key polynomials for $v$ over $K$. We can then conclude from Theorem \ref{Thm CSKP valn alg} that $\{X-a_m\}_{m\in\NN}\Union \{ X-a \}$ forms a complete sequence of key polynomials for $v$ over $\widehat{K}$.  
\end{Example}

\begin{Example}
	Assume that $k$ is algebraically closed and $\ch k=0$. Then the algebraic closure of $\widehat{K}$ is given by [\ref{Kuh vln model}, Theorem 10.2]:
	\[  \overline{\widehat{K}} = P(k) := \Union_{n\in\NN} k((t^{1/n})). \]
	Observe that $P(k)$ is a subfield of the generalized power series field $k((t^\QQ))$ equipped with the $t$-adic valuation. Take distinct primes $p,q$ with $p<q$. Set
	\[ a:= \sum_{n=0}^{\infty} t^{q^n/p^n}.  \]
	Set $a_m:= \sum_{n=0}^{m} t^{q^n/p^n}$ for each $m\in\NN$. Note that $t^{q^n/p^n} \in \overline{K}$ for each $n$ as it satisfies the polynomial $X^{p^n} - t^{q^n} \in K[X]$. It follows that $a_m \in \overline{K}$ for each $m\in\NN$. Now, 
	\[ \g_m:= v(a_m - a_{m+1}) = \frac{q^{m+1}}{p^{m+1}}. \]
	The fact that $p<q$ implies that $\{ \g_m \}_{m\in\NN}$ is cofinal in $v\overline{K}$. Thus $\{ a_m \}_{m\in\NN}$ is a Cauchy sequence in $(\overline{K},v)$. Consequently, $a$ is the unique limit of $\{a_m\}_{m\in\NN}$ in $\widehat{\overline{K}}$. Since the exponents of $t$ in the expression of $a$ do not share a common denominator, it follows that $a\notin P(k)$. Thus
	\[ a\in\widehat{\overline{K}}\setminus\overline{\widehat{K}}. \] 
	Take the value transcendental extension of $v$ to $\widehat{\overline{K}}(X)$ which admits $(a,\g)$ as a pair of definition. The fact that $\g>v\overline{K}= v\widehat{\overline{K}}$ implies that $(a,\g)$ is the unique pair of definition. The fact that $a\notin \overline{\widehat{K}}$ implies that both $(K(X)|K,v)$ and $(\widehat{K}(X)|\widehat{K},v)$ are valuation algebraic extensions. In this case, $(K(X)|K,v)$ is valuation algebraic of type I.
\end{Example}


\section{Applications to ramification theory}\label{Sect ram theory}

\begin{Lemma}\label{Lemma d(K(a)|K) = d(K^c(a)|K^c)}
	Let $(K,v)$ be a henselian valued field and fix an extension of $v$ to $\overline{\widehat{K}}$. Take any $a\in K^\sep$. Then $K(a)$ and $\widehat{K}$ are linearly disjoint over $K$. Further, $d(K(a)|K,v) = d(\widehat{K}(a)|\widehat{K},v)$.
	\newline Additionally, take any $b$ separable-algebraic over $\widehat{K}$. Then there exists $a\in K^\sep$ such that $\widehat{K}(b) = \widehat{K}(a)$.
\end{Lemma}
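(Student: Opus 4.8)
The plan is to prove the three assertions in turn; the common engine is that $K$ is dense in $\widehat K$ (so any polynomial over $\widehat K$ can be approximated arbitrarily $v$-closely by one over $K$ of the same degree), the Continuity of Roots theorem, and Krasner's Lemma, noting that $\widehat K$ is again henselian since $(K,v)$ is. For the linear disjointness, I would argue by contradiction: if $K(a)$ and $\widehat K$ are not linearly disjoint over $K$, then the minimal polynomial $q$ of $a$ over $K$ --- which is separable because $a\in K^\sep$ --- has a proper monic divisor $q_1\in\widehat K[X]$, namely the minimal polynomial of $a$ over $\widehat K$, with $\deg q_1=:m<\deg q=[K(a):K]$. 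Using $vK=v\widehat K$ and density, pick a monic $q_1'\in K[X]$ of degree $m$ with coefficients so $v$-close to those of $q_1$ that the Continuity of Roots theorem produces a root $a'$ of $q_1'$ with $v(a-a')>\kras(a,K)$; here $\kras(a,K)$ is defined precisely because $a$ is separable over $K$. Krasner's Lemma (applicable as $K$ is henselian) then gives $a\in K(a')$, so $[K(a):K]\le[K(a'):K]\le m$, contradicting $m<[K(a):K]$. Hence $q$ remains irreducible over $\widehat K$, i.e. $[\widehat K(a):\widehat K]=[K(a):K]$ and $K(a)$, $\widehat K$ are linearly disjoint over $K$.

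The equality of defects is then bookkeeping with the Lemma of Ostrowski. Since $K(a)|K$ is finite, $\widehat{K(a)}=K(a).\widehat K=\widehat K(a)$, so $(\widehat K(a)|K(a),v)$ is immediate; thus $vK(a)=v\widehat K(a)$ and $K(a)v=\widehat K(a)v$, while also $vK=v\widehat K$, $Kv=\widehat Kv$, and in particular $K$ and $\widehat K$ share the same characteristic exponent. Both $K$ and $\widehat K$ being henselian, Ostrowski applies to $K(a)|K$ and to $\widehat K(a)|\widehat K$; comparing the two factorizations and using $[K(a):K]=[\widehat K(a):\widehat K]$ from the previous step, the ramification-index and residue-degree factors cancel, leaving $d(K(a)|K,v)=d(\widehat K(a)|\widehat K,v)$.

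For the last assertion, let $b$ be separable-algebraic over $\widehat K$; I may assume $b\notin\widehat K$ (otherwise take $a=0$), so $b$ is not purely inseparable over $\widehat K$ and $\kras(b,\widehat K)$ is defined. Let $q_1\in\widehat K[X]$ be the (separable) minimal polynomial of $b$ over $\widehat K$, of degree $m=[\widehat K(b):\widehat K]$. Approximate $q_1$ by a monic $q_1'\in K[X]$ of the same degree with very close coefficients: for a fine enough approximation $q_1'$ has $m$ distinct roots, hence is separable, and by Continuity of Roots one of its roots $a$ satisfies $v(b-a)>\kras(b,\widehat K)$, so $a\in K^\sep$. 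Since $\widehat K$ is henselian, Krasner's Lemma yields $b\in\widehat K(a)$, whence $\widehat K(b)\subseteq\widehat K(a)$; combined with $[\widehat K(a):\widehat K]\le\deg q_1'=m=[\widehat K(b):\widehat K]$, this gives $\widehat K(b)=\widehat K(a)$.

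The step I expect to require the most care is the matching between the explicit bound appearing in the Continuity of Roots theorem (which depends on the degrees and on the Krasner constant in play) and the accuracy of approximation actually available: this works because density of $K$ in $\widehat K$ together with $vK=v\widehat K$ allows $v_{0,0}(q_1-q_1')$ to exceed any prescribed value in $vK$, so the bound can always be met; in the third assertion one must additionally ensure the approximating polynomial stays separable, which is automatic once the approximation is fine enough to keep the roots of $q_1$ distinct.
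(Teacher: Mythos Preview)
Your proof is correct, but it takes a different and more elementary route than the paper. The paper proves the linear disjointness (and the equality $[K(a):K]=[\widehat K(a):\widehat K]$) by invoking the machinery of minimal pairs: it chooses $\gamma\in v\overline K$ with $\gamma>\kras(a,K)$, observes that $(a,\gamma)$ is a minimal pair of definition for $v_{a,\gamma}$ over $K$, and then applies Theorem~\ref{Thm CSKP valn tr cofinal} and Corollary~\ref{Coro CSKP valn tr} to conclude that $(a,\gamma)$ remains a minimal pair over $\widehat K$ and that the degrees coincide. For the last assertion the paper similarly picks $\delta>\kras(b,\widehat K)$, makes $(b,\delta)$ a minimal pair over $\widehat K$, and uses Proposition~\ref{Prop completion valn tr induced} to produce $a\in K^\sep$ with $(a,\delta)$ a minimal pair over both $K$ and $\widehat K$; Krasner then finishes as in your argument. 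Your approach bypasses this valuation-transcendental apparatus entirely, working directly with density, the Continuity of Roots, and Krasner's Lemma. This is cleaner and self-contained for the lemma in isolation; the paper's route has the virtue of exhibiting the lemma as a corollary of the central theory of the article. One small point worth making explicit in your write-up: the Continuity of Roots theorem as stated requires $\alpha\in vK$, while $\kras(a,K)$ lies only in $v\overline K$; you should note that $vK$ is cofinal in its divisible hull $v\overline K$, so a suitable $\alpha\in vK$ exceeding the Krasner constant always exists.
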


\begin{proof}
	The fact that $(K,v)$ is henselian implies that $(\widehat{K},v)$ is also henselian [\ref{Warner topo fields}, Theorem 32.19]. Take $\g\in v\overline{K}$ such that $\g> \kras(a,K)$. Take the extension of $v$ to $\overline{K}(X)$ which has $(a,\g)$ as a pair of definition. Take any other pair of definition $(a^\prime,\g)$. Then $v(a-a^\prime) \geq \g > \kras (a,K)$. It follows from Krasner's Lemma that $a\in K(a^\prime)$. Consequently, $(a,\g)$ is a minimal pair of definition for $v$ over $K$. Take the induced extension $(\widehat{K}(X)|\widehat{K},v)$. It follows from Theorem \ref{Thm CSKP valn tr cofinal} that $(a,\g)$ is also a minimal pair of definition for $v$ over $\widehat{K}$ and that $[K(a):K] = [\widehat{K}(a):\widehat{K}]$. Observe that $\widehat{K}(a) = \widehat{K(a)}$ and hence $(\widehat{K}(a)|K(a),v)$ is an immediate extension. We can then conclude from the Lemma of Ostrowski that
	\[ d(K(a)|K,v) = d(\widehat{K}(a)|\widehat{K},v). \]
	Now take some $b$ which is separable-algebraic over $\widehat{K}$. Take $\d\in v\overline{\widehat{K}} = v\overline{K}$ such that $\d> \kras(b,\widehat{K})$ and consider the valuation transcendental extension of $v$ to $\overline{\widehat{K}}(X)$ which admits $(b,\d)$ as a pair of definition. Then $(b,\d)$ is a minimal pair of definition for $v$ over $\widehat{K}$. The fact that $\d\in v\overline{K}$ implies that $(\widehat{K}(X)|\widehat{K},v)$ is residue transcendental. In light of Proposition \ref{Prop completion valn tr induced}, we can choose $a \in K^\sep$ such that $(a,\d)$ is a minimal pair of definition for $v$ over $K$ and also over $\widehat{K}$. The fact that both $(a,\d)$ and $(b,\d)$ are minimal pairs of definition for $v$ over $\widehat{K}$ implies that $[\widehat{K}(a):\widehat{K}] = [\widehat{K}(b):\widehat{K}]$. Moreover, we have $v(a-b) \geq \d > \kras(b,\widehat{K})$ and hence $b\in \widehat{K}(a)$ by Krasner's Lemma. We can then conclude that $\widehat{K}(a) = \widehat{K}(b)$.
\end{proof}

The following corollary is immediate:

\begin{Corollary}\label{Coro d(K(a)|K) = d(K^c(a)|K^c)}
	Let assumptions be as in Lemma \ref{Lemma d(K(a)|K) = d(K^c(a)|K^c)}. Take a henselian valued field $(L,v)$ such that $K\subseteq L \subseteq \widehat{K}$. Then the conclusions of Lemma \ref{Lemma d(K(a)|K) = d(K^c(a)|K^c)} hold true with $\widehat{K}$ replaced by $L$.
\end{Corollary}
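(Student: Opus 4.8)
The strategy is to run the argument of Lemma \ref{Lemma d(K(a)|K) = d(K^c(a)|K^c)} verbatim, but replacing the completion $\widehat{K}$ by the intermediate field $L$ throughout, and to check that the only properties of $\widehat{K}$ actually used are ones shared by every henselian $L$ with $K\subseteq L\subseteq\widehat{K}$. First I would observe that $(K,v)$ henselian is given, $(L,v)$ henselian is assumed, and one always has $K\subseteq L\subseteq\widehat K$, so in particular $vK\subseteq vL\subseteq v\widehat K=vK$ forces $vL=vK$ and similarly $Lv=Kv$; thus $(L|K,v)$ is an immediate extension. This is the key structural fact: $L$ sits inside the completion, hence is dense-approximated by $K$, exactly as $\widehat K$ is.

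The core of the argument for the separable element $a\in K^\sep$ is: pick $\g\in v\overline K$ with $\g>\kras(a,K)$, form the valuation transcendental extension with pair of definition $(a,\g)$, note $(a,\g)$ is minimal over $K$ by Krasner, and then invoke Theorem \ref{Thm CSKP valn tr cofinal} / Corollary \ref{Coro CSKP valn tr} to transfer minimality and the degree equality $[K(a):K]=[\widehat K(a):\widehat K]$ to the completion. Here the point to verify is that the induced-extension machinery of Section \ref{Sect valn tr} works with $L$ in place of $\widehat K$. But all of that machinery only used that $(\widehat K|K,v)$ is immediate (so that $v\widehat K=vK$ is cofinal in $vK(X)$, $\d(f)\in v\overline K$ etc., feeding into Lemma \ref{Lemma deg f and d(f)}), which holds equally for $(L|K,v)$. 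So $(a,\g)$ remains a minimal pair of definition for $v$ over $L$, $[K(a):K]=[L(a):L]$, and $L(a)=\widehat{K(a)}\cap$-type reasoning gives that $(L(a)|K(a),v)$ is immediate; linear disjointness of $K(a)$ and $L$ over $K$ follows from the degree equality, and the Lemma of Ostrowski then yields $d(K(a)|K,v)=d(L(a)|L,v)$.

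For the last clause, given $b$ separable-algebraic over $L$, choose $\d\in v\overline L=v\overline K$ with $\d>\kras(b,L)$, so $(b,\d)$ is a minimal pair of definition for $v$ over $L$ and the induced extension over $K$ is residue transcendental; by Proposition \ref{Prop completion valn tr induced} (again valid with $L$ replacing $\widehat K$, since only immediacy and cofinality of $vK$ in $vK(X)$ are used) one finds $a\in K^\sep$ with $(a,\d)$ minimal over both $K$ and $L$, whence $[L(a):L]=[L(b):L]$ and $v(a-b)\geq\d>\kras(b,L)$, so Krasner gives $b\in L(a)$ and therefore $L(a)=L(b)$. I expect no genuine obstacle here, since every step is a transcription; the only thing that needs care — and the natural ``main obstacle'' to flag — is confirming that $L$ being merely an intermediate henselian subfield of $\widehat K$ (rather than the full completion) still makes $(L(X)|L,v)$ an \emph{induced} extension in the sense of Sections \ref{Sect valn tr}–\ref{Sect valn alg}, i.e.\ that the uniqueness and immediacy statements there genuinely only consumed the immediacy of $(\widehat K|K,v)$, which is why we take $L$ henselian and inside $\widehat K$.
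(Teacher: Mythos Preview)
Your proposal is correct: rerunning the proof of Lemma \ref{Lemma d(K(a)|K) = d(K^c(a)|K^c)} with $L$ in place of $\widehat{K}$ works, because every ingredient (Lemma \ref{Lemma deg f and d(f)}, Theorem \ref{Thm CSKP valn tr cofinal}, Corollary \ref{Coro CSKP valn tr}, Proposition \ref{Prop completion valn tr induced}) only consumes the facts that $(L|K,v)$ is immediate and that $K$ is dense in $L$, both of which follow from $K\subseteq L\subseteq\widehat K$.

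The paper, however, takes a much shorter route. It simply notes that $K\subseteq L\subseteq\widehat{K}$ forces $\widehat{L}=\widehat{K}$, and then applies the \emph{statement} of Lemma \ref{Lemma d(K(a)|K) = d(K^c(a)|K^c)} twice --- once to the henselian field $K$ and once to the henselian field $L$, both having the same completion $\widehat{K}$ --- and reads off the conclusions by comparing through $\widehat{K}$. For instance, for $a\in K^\sep\subseteq L^\sep$ one gets $[K(a):K]=[\widehat{K}(a):\widehat{K}]=[L(a):L]$ and likewise for the defects; for the second assertion, given $b$ separable over $L$ (hence over $\widehat{K}$), the Lemma applied to $K$ yields $a\in K^\sep$ with $\widehat{K}(a)=\widehat{K}(b)$, and the degree equalities over $L$ then force $L(a)=L(b)$. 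Your approach buys a self-contained verification that the Section \ref{Sect valn tr} machinery is robust under replacing $\widehat{K}$ by any intermediate immediate henselian $L$; the paper's approach buys brevity by treating the Lemma as a black box and exploiting the single observation $\widehat{L}=\widehat{K}$.
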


\begin{proof}
	The assumption that $K\subseteq L \subseteq\widehat{K}$ implies that $\widehat{K}=\widehat{L}$. The assertions now follow directly from Lemma \ref{Lemma d(K(a)|K) = d(K^c(a)|K^c)}.
\end{proof}

\pars We can now give a \textbf{proof of Theorem \ref{Thm K^c properties}}:

\begin{proof}
	We first assume that $(K,v)$ is separably defectless. Take some $b\in\overline{\widehat{K}}$ which is separable over $\widehat{K}$. In light of Lemma \ref{Lemma d(K(a)|K) = d(K^c(a)|K^c)}, there exists $a\in K^\sep$ such that $\widehat{K}(b) = \widehat{K}(a)$ and hence 
	\[ d(\widehat{K}(b)|\widehat{K},v) = d(\widehat{K}(a)|\widehat{K},v) = d(K(a)|K,v) = 1. \]
	Since this holds for any arbitrary $b$ which is separable-algebraic over $\widehat{K}$, we conclude from the primitive element theorem that $(\widehat{K},v)$ is separably defectless. 
	\pars Conversely, assume that $(\widehat{K},v)$ is separably defectless. Take any $a$ which is separable-algebraic over $K$. Then $a$ is also separable-algebraic over $\widehat{K}$. It now follows from Lemma \ref{Lemma d(K(a)|K) = d(K^c(a)|K^c)} that
	\[ d(K(a)|K,v) = d(\widehat{K}(a)|\widehat{K},v) = 1. \]
	The final equivalence of $(i)$ follows from [\ref{Kuh A-S extensions and defectless fields paper}, Theorem 5.1].
	
	\parm Recall that a henselian valued field $(K,v)$ is (separably) tame if and only if $(K,v)$ is (separable-algebraically) algebraically maximal, $vK$ is $p$-divisible and $Kv$ is perfect, where $p$ is the characteristic exponent [\ref{Kuh vln model}, Lemma 11.8]. Assume that $(K,v)$ is separably tame. Then $(K,v)$ is separably defectless, hence by $(i)$, $(\widehat{K},v)$ is defectless. In particular, $(\widehat{K},v)$ is algebraically maximal. Noting that the completion is an immediate extension, we conclude that $(\widehat{K},v)$ is tame. Conversely, assume that $(\widehat{K},v)$ is tame. Then $(\widehat{K},v)$ is defectless and hence $(K,v)$ is separably defectless. Consequently, $(K,v)$ is separable-algebraically maximal. It follows that $(K,v)$ is separably tame. The remaining equivalence of $(ii)$ follows analogously. 
	
	\parm We now assume that $(K,v)$ is separable-algebraically maximal. Suppose that $(\widehat{K},v)$ is not separable-algebraically maximal. Then there exists a nontrivial finite separable immediate extension $(\widehat{K}(a)|\widehat{K},v)$. Observe that $[\widehat{K}(a):\widehat{K}] = d(\widehat{K}(a)|\widehat{K},v)$ by the Lemma of Ostrowski. In light of Lemma \ref{Lemma d(K(a)|K) = d(K^c(a)|K^c)}, we can assume that $a\in K^\sep$. Thus $d(K(a)|K,v) = [K(a):K]$ and as a consequence, $(K(a)|K,v)$ is a nontrivial separable immediate extension, thereby yielding a contradiction. It follows that $(\widehat{K},v)$ is separable-algebraically maximal. The reverse direction of $(iii)$ is an immediate corollary of Lemma \ref{Lemma d(K(a)|K) = d(K^c(a)|K^c)}.
	
	\parm The first equivalence of $(iv)$ follows easily from Lemma \ref{Lemma d(K(a)|K) = d(K^c(a)|K^c)}. The second equivalence is a direct corollary of [\ref{Warner topo fields}, Theorem 30.27].
	
	\parm We now assume that $(K,v)$ is algebraically maximal. Suppose that $(\widehat{K},v)$ is not algebraically maximal. Then there exists a nontrivial finite immediate extension $(L|\widehat{K},v)$. Set $L^\prime$ to be the separable closure of $\widehat{K}$ in $L$. Then $(L^\prime|\widehat{K},v)$ is a finite separable immediate extension and $(L|L^\prime,v)$ is a purely inseparable immediate extension. It follows from $(iii)$ that $(\widehat{K},v)$ is separable-algebraically maximal. Consequently, $L^\prime = \widehat{K}$, that is, $(L|\widehat{K})$ is a nontrivial finite purely inseparable immediate extension. Since any finite purely inseparable extension is a tower of purely inseparable extensions of degree $p$, we obtain some $\eta\in L$ such that 
	\[ (\widehat{K}(\eta)|\widehat{K},v) \text{ is a purely inseparable immediate extension of degree }p. \] 
	Denote by $\dist(\eta,\widehat{K})$ the least initial segment of $v\overline{\widehat{K}}$ containing $v(\eta-\widehat{K})$. The facts that $\eta\notin\widehat{K}$ and $(\widehat{K},v)$ is complete imply that there exists $\a\in vK$ such that $\dist(\eta,\widehat{K}) < \a$. Take the minimal polynomial $X^p - a \in \widehat{K}[X]$ of $\eta$ over $\widehat{K}$. Invoking the theorem of the continuity of roots, we observe that there exists $a^\prime\in K$ and $\d\in vK$ such that 
	\[ v(\eta-\eta^\prime) > \a \text{ whenever } v(a-a^\prime) > \d,  \]
	where $(\eta^\prime)^p  = a^\prime$. Thus $\eta^\prime$ is purely inseparable over $K$ with $[K(\eta^\prime):K] \leq p$. It follows from [\ref{Kuh A-S extensions and defectless fields paper}, Lemma 2.17] that 
	\[ \dist(\eta,\widehat{K}) = \dist(\eta^\prime,\widehat{K}). \]
	As a consequence, $\dist(\eta^\prime,\widehat{K}) < \a$ and hence $\eta^\prime\notin\widehat{K}$. Thus $\eta^\prime$ is purely inseparable over $\widehat{K}$. Moreover, 
	\[ [K(\eta^\prime):K] = p = [\widehat{K}(\eta^\prime):\widehat{K}]. \]
	Recall that $(\widehat{K}(\eta)|\widehat{K},v)$ is an immediate extension. It now follows from [\ref{Kuh A-S extensions and defectless fields paper}, Lemma 2.21] that $(\widehat{K}(\eta^\prime)|\widehat{K},v)$ is also immediate. The observation that $\widehat{K}(\eta^\prime) = \widehat{K(\eta^\prime)}$ now implies that
	\[ (K(\eta^\prime)|K,v) \text{ is a purely inseparable immediate extension of degree }p. \]
	But this contradicts the assumption that $(K,v)$ is algebraically maximal. It follows that $(\widehat{K},v)$ is also algebraically maximal.
\end{proof}

We now provide an example to show that the converse to Theorem \ref{Thm K^c properties}$(v)$ does not hold true. The first part of the construction follows an example due to F. K. Schmidt.

\begin{Example}
	Take a non-zero prime number $p$ and consider the valued field $\FF_p((t))$ equipped with the $t$-adic valuation $v:= v_t$. Fix an extension of $v$ to $\overline{\FF_p((t))}$. Observe that $\FF_p((t))|\FF_p(t)$ has infinite transcendence degree. Choose $\zeta\in\FF_p((t))$ which is transcendental over $\FF_p(t)$. Set $k:= \FF_p(t,\zeta^p)$ and $K:= k^h$. The fact that the henselization is a separable-algebraic extension implies that $K|k$ is linearly disjoint from the purely inseparable extension $\FF_p (t,\zeta)|k$. Consequently, $[K(\zeta):K] = [\FF_p(t,\zeta):k] = p$. We now have a chain of inclusions
	\[ \FF_p(t) \subsetneq K \subsetneq K(\zeta) \subsetneq \FF_p((t)). \]
	Since $(\FF_p((t))|\FF_p(t),v)$ is an immediate extension, the subextension $(K(\zeta)|K,v)$ is also immediate. Consequently, $(K,v)$ is not algebraically maximal. 
	\pars The chain $\FF_p(t) \subseteq K \subseteq \FF_p((t)) = \widehat{\FF_p(t)}$ implies that $\widehat{K} = \FF_p((t))$. Now $\FF_p((t))$ being a power series field, is maximal. In particular, $\FF_p((t))$ is algebraically maximal and defectless.
\end{Example}
The preceding example also illustrates that $(\widehat{K},v)$ being defectless does not necessarily imply that $(K,v)$ is defectless.

\section{Density of $K(X)$ in $\widehat{K}(X)$}\label{Sect density}

\begin{Proposition}\label{Prop not density of K(X)}
	Let $(\overline{\widehat{K}}(X)|K,v)$ be an extension of valued fields. Assume that $(K(X)|K,v)$ is either value transcendental with a unique pair of definition or is valuation algebraic of type II. Then $\widehat{K}$ is not contained in $\widehat{K(X)}$.
\end{Proposition}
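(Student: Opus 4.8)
The plan is to recast the statement as the failure of $K(X)$ to be dense in $\widehat K(X)$, and then to obstruct that density separately in the two cases. If $\widehat K \subseteq \widehat{K(X)}$, then $\widehat K(X) = \widehat K.K(X)$ lies between $K(X)$ and its completion $\widehat{K(X)}$, so $K(X)$ is dense in $\widehat K(X)$; conversely, since the completion is the maximal extension in which $K(X)$ is dense, density of $K(X)$ in $\widehat K(X)$ forces $\widehat K(X) \subseteq \widehat{K(X)}$, hence $\widehat K \subseteq \widehat{K(X)}$. (One checks in passing that the valuation $\widehat K(X)$ carries as a subfield of $\widehat{K(X)}$ restricts to the canonical valuation on the closure $\widehat K$ of $K$, so it agrees, up to equivalence, with the induced valuation.) Thus it suffices to show that $K(X)$ is not dense in $\widehat K(X)$. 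By Theorem \ref{Thm CSKP unique pair}, Theorem \ref{Thm CSKP valn alg} and Proposition \ref{Prop valn tr induced unique pair two cases}, under either hypothesis the induced extension $(\widehat K(X)|\widehat K,v)$ is value transcendental with a unique pair of definition $(a,\g)$, where $\g > v\overline{\widehat K} = v\overline K$; let $\widehat Q_a$ be the minimal polynomial of $a$ over $\widehat K$, so that $v\widehat Q_a > v\overline K$ since $\widehat Q_a(a)=0$.

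When $(K(X)|K,v)$ is valuation algebraic of type II the obstruction is on value groups: since $(\overline K(X)|\overline K,v)$ is immediate we have $vK(X) \subseteq v\overline K(X) = v\overline K$, whereas a $z \in K(X)$ with $v(\widehat Q_a - z) > v\widehat Q_a$ would satisfy $vz = v\widehat Q_a > v\overline K$, a contradiction.

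The value transcendental case is the main difficulty: there $a \in \overline K$ and $vK(X) = \ZZ\g \dirsum vK(a)$ already contains $\g$, so the value group no longer separates $K(X)$ from $\widehat K(X)$. Here I would instead fix an element $c \in \widehat K$ transcendental over $K$ (in particular $c \notin K(a)$) and show that $v(c-z) < \g$ for every $z \in K(X)$, which contradicts density since $\g \in vK(X) \subseteq v\widehat K(X)$. After writing $z = f_1/f_2$ with $f_i \in K[X]$ and dividing out the highest power of the minimal polynomial $Q_a$ of $a$ over $K$ so that $f_i(a) \ne 0$, the case $vz \ne vc$ gives $v(c-z) = \min\{vc,vz\}$, which is $< \g$ by inspection of the $\g$-component; and the case $vz = vc$ gives $c-z = (cf_2 - f_1)/f_2$, where $cf_2(a) - f_1(a) \ne 0$ because $c \notin K(a)$, so that — using $v_{a,\g}(g) = vg(a)$ for $g \in \widehat K[X]$ with $g(a) \ne 0$ (as $\g$ dominates $v\overline K$) — one obtains $v(c-z) = v\big(cf_2(a) - f_1(a)\big) - vf_2(a) \in v\widehat K(a) \subseteq v\overline{\widehat K} = v\overline K < \g$. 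I expect the bookkeeping with Hasse expansions and powers of $Q_a$ to be routine; the conceptual crux, which is where the argument genuinely departs from the type II case, is that once the value group fails to do the job one must exploit that "evaluation at $a$" takes values in the immediate extension $\widehat K(a)|K(a)$, whose value group lies strictly below $\g$.
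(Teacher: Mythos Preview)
Your argument is essentially the paper's, but with one unjustified step that the paper handles differently.

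For the valuation algebraic type~II case, your obstruction via the value group is exactly the paper's argument, phrased with a specific witness $\widehat Q_a$ rather than a global comparison of $v\widehat K(X)$ and $vK(X)$; this is fine.

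For the unique-pair case, the core mechanism is the same in both proofs: if $z\in K(X)$ approximates some $c\in\widehat K$ beyond $\g$, then writing $z=f_1/f_2$ and using that $v_{a,\g}(h)=vh(a)$ whenever $h(a)\neq 0$ (since $\g>v\overline K$) forces the algebraic relation $c=f_1(a)/f_2(a)\in K(a)$. The difference is how the two proofs arrange that this relation is a contradiction. You posit a $c\in\widehat K$ \emph{transcendental over $K$}, so that $c\notin K(a)$ automatically. But you do not justify that such a $c$ exists: it is not obvious that $\widehat K|K$ must contain transcendental elements whenever it is nontrivial, and even the weaker statement $\widehat K\not\subseteq K(a)$ needs an argument. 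The paper sidesteps this entirely by first base-changing to $K(a)$: since $\widehat{K(a)}=\widehat K(a)$ and $\widehat K\subseteq\widehat{K(X)}$ would imply $\widehat{K(a)}\subseteq\widehat{K(a,X)}$, one may assume $a\in K$ from the start. Then one only needs $b\in\widehat K\setminus K$, and the relation $f(a)=bg(a)$ with $f(a),g(a)\in K$ (not both zero, by coprimality of $f,g$) immediately contradicts $b\notin K$.

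So your sketch is correct in spirit, and the ``evaluation at $a$'' insight you highlight is indeed the crux; but replace the unproved existence of a transcendental $c$ by the paper's reduction to $a\in K$, after which any $c\in\widehat K\setminus K$ does the job.
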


\begin{proof}
	We first assume that $(K(X)|K,v)$ is valuation algebraic of type II. Suppose that $\widehat{K}$ is contained in $\widehat{K(X)}$. Then $\widehat{K}(X)\subseteq \widehat{K(X)}$ and hence $v\widehat{K}(X) \subseteq v\widehat{K(X)}= vK(X)$. The fact that $(K(X)|K,v)$ is valuation algebraic implies that $vK(X)/vK$ is a torsion group. It follows that $v\widehat{K}(X)/vK$ is also a torsion group. However, this yields a contradiction, as the fact that $(\widehat{K}(X)|\widehat{K},v)$ is value transcendental implies that $\rr v\widehat{K}(X)/v\widehat{K} = 1$, that is, $\rr v\widehat{K}(X)/vK=1$. 
	
	\pars We now assume that $(K(X)|K,v)$ admits a unique pair of definition $(a,\g)$ for some $a\in\overline{K}$. Then $\g>v\overline{K}$. Suppose that $\widehat{K}\subseteq \widehat{K(X)}$. Then $\widehat{K(a)} = \widehat{K}(a) \subseteq \widehat{K(X)}(a)=\widehat{K(a,X)}$. So without any loss of generality we can assume that $a\in K$. Consequently, $\g = v(X-a) \in vK(X)$. Take some $b\in\widehat{K}\setminus K$. The assumption $\widehat{K}\subseteq \widehat{K(X)}$ implies that there exist coprime polynomials $f(X),g(X) \in K[X]$ such that 
	 \[ v(\frac{f(X)}{g(X)}-b) > \g. \]
	 The fact that $(a,\g)$ is the unique pair of definition for $v$ over $K$ implies that $vg\in v\overline{K}$ whenever $g(a)\neq 0$. Otherwise $vg > v\overline{K}$. In either case, we can conclude that
	 \[ v(f(X)-bg(X)) > \g+vg > v\overline{K} = v\overline{\widehat{K}}.  \]
	 Set $h(X): = f(X)-bg(X) \in \widehat{K}[X]$. Note that $(a,\g)$ is also the unique pair of definition for $v$ over $\widehat{K}$. Then the condition $vh>v\overline{\widehat{K}}$ necessarily implies that $h(a)=0$, that is,
	 \begin{equation}\label{eqn}
	 	f(a) = bg(a).
	 \end{equation} 
	 The conditions $a\in K$, $f(X),g(X)\in K[X]$ imply that $f(a),g(a) \in K$. The coprimality of $f(X)$ and $g(X)$ implies that $f(a)$ and $g(a)$ can not vanish simultaneously. But then (\ref{eqn}) contradicts the assumption that $b\in \widehat{K}\setminus K$. Hence $\widehat{K}$ is not contained in $\widehat{K(X)}$. 
\end{proof}

We provide now a \textbf{proof of Theorem \ref{Thm density of K(X)}}:

\begin{proof}
	We first assume that $(K(X)|K,v)$ is valuation transcendental such that $vK$ is cofinal in $vK(X)$. Take a pair of definition $(a,\g)$ for $v$ over $K$. Then $(a,\g)$ is also a pair of definition for the induced extension $(\widehat{K}(X)|\widehat{K},v)$. We observe from Corollary \ref{Coro CSKP valn tr} that the extension $(\widehat{K}(X)|K(X),v)$ is immediate. Consequently, $\a \in vK(X)$. Take polynomials $f(X),g(X) \in \widehat{K}[X]$. Write
	\[ f(X) = \sum_{i=0}^{n} c_i X^i = \sum_{i=0}^{n} C_i (X-a)^i.\]
	The coefficients $C_i$ arising from the Hasse derivative are given by 
	\[ C_i = c_i + \binom{i+1}{i} a c_{i+1} + \dotsc + \binom{n}{i} a^{n-i} c_n. \]
	Similarly, we can write $g(X)$ as 
	\[ g(X) = \sum_{i=0}^{m} d_i X^i = \sum_{i=0}^{m} D_i (X-a)^i, \]
	where
	\[ D_i = d_i + \binom{i+1}{i} a d_{i+1} + \dotsc + \binom{m}{i} a^{m-i} d_m. \]
	Observe that $C_i, D_j \in \overline{\widehat{K}}$ and hence $v C_i, vD_j \in v \overline{K}$. Set
	\[ C:= \min \{v C_i\} \text{ and } D:= \min \{ v D_j \}. \]  
	Take $\b\in vK$ such that
	\begin{align*}
		& \b + i\g > \a \text{ for all } i=0, \dotsc, \max \{m,n\},\\
		& \b + \min\{ C,D\} + k\g > \a + 2 vg \text{ for all } k=0, \dotsc, m+n.
	\end{align*}
	Note that the choice of such a $\b$ is made possible by the facts that $\a \in vK(X)$ and $vK$ is cofinal in $vK(X)$. 
	
	\pars We can again express $f$ as $f(X) = c_n \prod_{ i=1}^{n} (X-z_i)$. By the continuity of roots, there exists $\d \in vK$ such that after suitable enumeration of the roots, we obtain $v(z_i-z^\prime_i) > \d(f)$ for all $i$ whenever $v(c_i -c^\prime_i) > \d$ for all $i$, where $\sum_{i=0}^{n} c^\prime_i X^i = c^\prime_n \prod_{i=1}^{n} (X-z^\prime_i)$. The fact that $c_i \in \widehat{K}$ implies that we can take $c^\prime_i \in K$ such that 
	\begin{equation}\label{eqn v(c-c prime)}
		v(c_i - c^\prime_i) > \max \{ \d, \b, \b - va, \dotsc, \b - nva \}.
	\end{equation}
	 Take $f^\prime(X):= \sum_{i=0}^{n} c^\prime_i X^i \in K[X]$. Employing exactly the same arguments as in the proof of Lemma \ref{Lemma deg f and d(f)}, we obtain that $vf = vf^\prime$ and $\deg f = \deg f^\prime$. Consider the expansion $f^\prime = \sum_{i=0}^{n} C^\prime_i (X-a)^i$. Then,
	 \[ C_i - C^\prime_i = (c_i - c^\prime_i) + \binom{i+1}{i} a (c_{i+1}-c^\prime_{i+1}) + \dotsc + \binom{n}{i} a^{n-i} (c_n - c^\prime_n). \] 
	 For any non-zero term in the above expansion, we obtain from (\ref{eqn v(c-c prime)}) that
	 \[ v\binom{j}{i} a^{j-i} (c_j - c^\prime_j) = (j-i)va + v(c_j-c^\prime_j) > (j-i)va+ \b-(j-i)va = \b. \]
	 As a consequence of the triangle inequality, we now have that
	 \[ v(C_i - C^\prime_i) > \b \text{ for all }i. \]
	 Consider the expansion
	 \[ f-f^\prime = \sum_{i=0}^{n} (C_i - C^\prime_i)(X-a)^i. \]
	 The fact that $(a,\g)$ is a pair of definition for $v$ implies that $v(f-f^\prime) = \min \{ v(C_i-C^\prime_i)+ i\g \}$. It then follows from our preceding discussions that $v(f-f^\prime)> \b+i\g$ for some $i\in\{0,\dotsc,n\}$. From our choice of $\b$, we conclude that
	 \[ v(f-f^\prime)>\a. \]
	 Analogously, there exists $g^\prime(X) \in K[X]$ given by $g^\prime = \sum_{j=0}^{m} d^\prime_j X^j = \sum_{j=0}^{m} D^\prime_j (X-a)^j$ satisfying the following conditions:
	 \begin{align*}
	 	& \deg g = \deg g^\prime, \, vg = vg^\prime, \, v(D_j - D^\prime_j) > \b \text{ for all }j, \\
	 	& v(g-g^\prime)>\a.
	 \end{align*}
	 We now consider $v(\frac{f}{g} - \frac{f^\prime}{g^\prime})$. Observe that
	 \[ v(\frac{f}{g} - \frac{f^\prime}{g^\prime}) = v(fg^\prime-f^\prime g) - v(gg^\prime) = v(fg^\prime-f^\prime g) - 2vg \]
	 as $vg = vg^\prime$. We have the expansion 
	 \[ fg^\prime - f^\prime g = \sum_{k=0}^{m+n} \sum_{i+j=k}(C_i D^\prime_j - C^\prime_iD_j)(X-a)^k. \]
	 Write $C_i D^\prime_j - C^\prime_iD_j = (D^\prime_j - D_j)C_i + (C_i - C^\prime_i) D_j$. The facts that $v(D^\prime_j - D_j) > \b$ and $vC_i \geq C$ imply that $v(D^\prime_j - D_j)C_i > \b+C$. Similarly, $v(C_i - C^\prime_i) D_j > \b+D$. It then follows from the triangle inequality that
	 \[ v\sum_{i+j=k} (C_i D^\prime_j - C^\prime_iD_j) > \b+ \min\{C,D\}. \]
	 The fact that $(a,\g)$ is a pair of definition for $v$ implies that
	 \[ v(fg^\prime - f^\prime g) = \min_{k=0,\dotsc,m+n} \{ v\sum_{i+j=k} (C_i D^\prime_j - C^\prime_iD_j) + k\g \}. \]
	 Consequently, $v(fg^\prime - f^\prime g) > \b + \min\{ C,D \}+ k\g$ for some $k\in\{0,\dotsc,m+n\}$. From our choice of $\b$, we conclude that $v(fg^\prime - f^\prime g) > \a + 2vg$ and as a consequence, 
	 \[ v(\frac{f}{g} - \frac{f^\prime}{g^\prime}) = v(fg^\prime-f^\prime g) - 2vg > \a.\]
	 We have thus proved the assertions of the theorem when $(K(X)|K,v)$ is valuation transcendental such that $vK$ is cofinal in $vK(X)$.
	 
	 \parm We now assume that $(K(X)|K,v)$ is valuation algebraic of type I. We have observed in Theorem \ref{Thm CSKP valn alg} that $(\widehat{K}(X)|K(X),v)$ is immediate. Consequently, $\a\in vK(X)$. Take a complete sequence of key polynomials $\{Q_\nu\}_{\nu\in\L}$ for $v$ over $K$. Then $\{Q_\nu\}_{\nu\in\L}$ also forms a complete sequence of key polynomials for $v$ over $\widehat{K}$ (Theorem \ref{Thm CSKP valn alg}). For each $\nu\in\L$, take a maximal root $a_\nu$ of $Q_\nu$ and set $\g_\nu:= \d(Q_\nu) = v(X-a_\nu)$. Set
	 \[ v_\nu:= v_{a_\nu,\g_\nu}. \]
	 The fact that $\{Q_\nu\}_{\nu\in\L}$ is a complete sequence of key polynomials for $v$ over $K$ implies that $\{\g_\nu\}_{\nu\in\L}$ is cofinal in $v(X-\overline{K})$. Then $(K(X)|K,v)$ being valuation algebraic of type I implies that $\{a_\nu\}_{\nu\in\L}$ is a pCs of transcendental type in $(\overline{\widehat{K}},v)$ with limit $X$. It now follows from the proof of [\ref{Kaplansky}, Theorem 2] that for any polynomial $h(X) \in \widehat{K}[X]$, we have
	 \[ vh = v_\nu h \text{ for all } \nu\in\L  \text{ sufficiently large}.\]
	 Further, the fact that $Q_\nu$ is a key polynomial for $v$ over $K$ with maximal root $a_\nu$ implies that $(a_\nu,\g_\nu)$ is a minimal pair of definition for $v_\nu$ over $K$. It then follows from Corollary \ref{Coro CSKP valn tr} that $(a_\nu,\g_\nu)$ is also a minimal pair of definition for $v_\nu$ over $\widehat{K}$. In light of [\ref{APZ all valns on K(X)}, Theorem 5.1], we then conclude that
	 \begin{equation}\label{eqn vh geq v_nu h}
	 	vh \geq v_\nu h \text{ for all polynomials }h(X)\in\widehat{K}[X], \text{ for all } \nu\in\L.
	 \end{equation}
	 Take $f,g \in \widehat{K}[X]$. Choose $\nu\in\L$ large enough such that $vf=v_\nu f$ and $vg = v_\nu g$. Fix an extension of $v_\nu$ to $\overline{\widehat{K}}(X)$. The fact that $(K(X)|K,v)$ is valuation algebraic implies that $(\overline{K}(X)|\overline{K},v)$ is immediate. Consequently, $\g_\nu \in v\overline{K}$. It follows that $(K(X)|K,v_\nu)$ is residue transcendental. In particular, $vK$ is cofinal in $v_\nu K(X)$. Take $\a_0 \in vK$ such that 
	 \[ \a_0 > \max \{ \a, vf,vg \}. \]
	 From our preceding discussions in the valuation transcendental case, we obtain $f^\prime, g^\prime \in K[X]$ such that
	 \begin{align*}
	 	\deg f = \deg f^\prime, \, \, \deg g = \deg g^\prime,& \, \, v_\nu f  = v_\nu f^\prime, \, \, v_\nu g  = v_\nu g^\prime,\\
	 	v_\nu(f-f^\prime) > \a_0,  \, \,  v_\nu(g-g^\prime) > \a_0, & \, \, v_\nu(\frac{f}{g} - \frac{f^\prime}{g^\prime})  > \a_0.
	 \end{align*}    
 As a consequence of (\ref{eqn vh geq v_nu h}), we then have that
 \begin{align*}
 	&v(f-f^\prime)\geq v_\nu(f-f^\prime) >\a_0 >\a,\\
 	&v(g-g^\prime)\geq v_\nu(g-g^\prime) >\a_0>\a.
 \end{align*}    
The fact that $\a_0 > \max\{ vf,vg \}$ then yields the following in light of the triangle inequality:
\[ vf = vf^\prime \text{ and } vg = vg^\prime. \]
Moreover,
\begin{align*}
	v(\frac{f}{g} - \frac{f^\prime}{g^\prime}) &= v(fg^\prime-f^\prime g) - v(gg^\prime) = v(fg^\prime - f^\prime g) - 2vg\\
	&= v(fg^\prime - f^\prime g) - 2v_\nu g\\
	&\geq v_\nu (fg^\prime - f^\prime g) - 2v_\nu g\\
	&= v_\nu (\frac{f}{g} - \frac{f^\prime}{g^\prime}) >\a_0 >\a.
\end{align*}
We have thus proved the theorem.
\end{proof}


\section{Implicit constant fields}\label{Sect imp cnst fields}

Given an extension of valued fields $(\overline{K(X)}|K,v)$, the implicit constant field of the extension $(K(X)|K,v)$ is defined as 
\[ IC(K(X)|K,v):= \overline{K}\sect K(X)^h. \]
Thus $IC(K(X)|K,v)$ is a separable-algebraic extension of $K^h$. As a consequence, the extension $IC(K(X)|K,v)|K^h$ is simple whenever it is finite. 

\pars We now assume that $(\overline{\widehat{K}(X)}|K,v)$ is an extension of valued fields. Then,
\[ IC(K(X)|K,v) = \overline{K}\sect K(X)^h \subseteq \overline{\widehat{K}} \sect \widehat{K}(X)^h = IC(\widehat{K}(X)|\widehat{K},v). \]
The fact that $\widehat{K}^h$ is a subset of $IC(\widehat{K}(X)|\widehat{K},v)$ then implies the following:
\begin{equation}\label{eqn IC and completion}
	\widehat{K}^h. IC(K(X)|K,v) \subseteq IC(\widehat{K}(X)|\widehat{K},v),
\end{equation}
where $\widehat{K}^h$ denotes the henselization of $\widehat{K}$. We can now give a \textbf{proof of Proposition \ref{Prop IC of K and K hat}}:

\begin{proof}
	We first assume that $(K(X)|K,v)$ is valuation transcendental such that it admits a unique pair of definition $(a,\g)$. Then the induced extension $(\widehat{K}(X)|\widehat{K},v)$ also has $(a,\g)$ as the unique pair of definition. It follows from [\ref{Dutta min fields implicit const fields}, Theorem 1.1] that $K^h\subseteq IC(K(X)|K,v)\subseteq K^h(a)$ and hence $IC(K(X)|K,v)$ is a finite extension of $K^h$. Take $b\in K^\sep$ such that 
	\[ IC(K(X)|K,v) = K^h(b). \]
	It follows from [\ref{Dutta min fields implicit const fields}, Theorem 7.2] that $K^h(a)|K^h(b)$ is a purely inseparable extension. Consequently, the extension $\widehat{K}^h(a)|\widehat{K}^h(b)$ is also purely inseparable [\ref{Lang}, V, \S 6, Proposition 6.5]. Further, the fact that $b$ is separable over $K$ implies that $\widehat{K}^h(b)|\widehat{K}$ is separable. It follows that $\widehat{K}^h(b)$ is the separable-algebraic closure of $\widehat{K}$ in $\widehat{K}^h(a)$. In light of [\ref{Dutta min fields implicit const fields}, Theorem 7.2], we conclude that
	\[ IC(\widehat{K}(X)|\widehat{K},v) = \widehat{K}^h(b). \]

	\pars We now assume that either $(K(X)|K,v)$ is valuation transcendental such that $vK$ is cofinal in $vK(X)$ or $(K(X)|K,v)$ is valuation algebraic of type I. Take any $b\in IC(\widehat{K}(X)|\widehat{K},v)$. Then $b$ is separable-algebraic over $\widehat{K}^h$. We have the inclusions $K^h \subseteq \widehat{K}^h \subseteq \widehat{K^h}$. In light of Corollary \ref{Coro d(K(a)|K) = d(K^c(a)|K^c)}, we can then choose $a$ separable-algebraic over $K^h$ such that $\widehat{K}^h(b) = \widehat{K}^h(a)$. The fact that $\widehat{K}^h \subseteq \widehat{K}(X)^h$ then implies that
	\[ \widehat{K}(X)^h(b) = \widehat{K}(X)^h(a). \]
	By definition, $b\in\widehat{K}(X)^h$. It follows that $a\in \widehat{K}(X)^h$. We have observed in Theorem \ref{Thm density of K(X)} that $K(X)$ lies dense in $\widehat{K}(X)$. Then $K(X) \subseteq \widehat{K}(X) \subseteq \widehat{K(X)}$. Fix an extension of $v$ to $\overline{\widehat{K(X)}}$. Upon taking henselizations, we have
	\[  K(X)^h \subseteq \widehat{K}(X)^h \subseteq \widehat{K(X)}^h \subseteq \widehat{K(X)^h}.  \]
	We then obtain from Corollary \ref{Coro d(K(a)|K) = d(K^c(a)|K^c)} that $[\widehat{K}(X)^h(a):\widehat{K}(X)^h] = [K(X)^h(a):K(X)^h]$. Consequently, $a\in K(X)^h$. The fact that $a\in\overline{K}$ then implies that $a\in IC(K(X)|K,v)$. We have thus obtained that $b\in \widehat{K}^h(a) \subseteq \widehat{K}^h.IC(K(X)|K,v)$ and as a consequence, 
	\[ IC(\widehat{K}(X)|\widehat{K},v) \subseteq \widehat{K}^h. IC(K(X)|K,v). \]
	The proposition now follows from (\ref{eqn IC and completion}).
\end{proof}

\begin{Remark}
	In the context of Proposition \ref{Prop IC of K and K hat} we obtain that 
	\[ IC(K(X)|K,v) \text{ is dense in } IC(\widehat{K}(X)|\widehat{K},v). \]
	Indeed, we have the inclusion $\widehat{K}^h \subseteq \widehat{K^h}$. The fact that $IC(K(X)|K,v)|K^h$ is an algebraic extension implies that $\widehat{K^h}$ is contained in the completion of $IC(K(X)|K,v)$. Consequently, $\widehat{K}^h.IC(K(X)|K,v)$ is contained in the completion of $IC(K(X)|K,v)$.
\end{Remark}

\begin{Corollary}
	Let notations and assumptions be as in Proposition \ref{Prop IC of K and K hat}. Assume further that $(K,v)$ is henselian and $IC(K(X)|K,v)|K$ is a finite extension. Then
	\[  IC(\widehat{K}(X)|\widehat{K},v) \text{ is the completion of } IC(K(X)|K,v). \]
\end{Corollary}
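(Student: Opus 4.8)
The plan is to read this off from Proposition \ref{Prop IC of K and K hat} together with the classical identity $\widehat{L} = L.\widehat{K}$ for a finite extension of valued fields, recalled in Section \ref{Sect prelims}. First, since the standing assumptions are those of Proposition \ref{Prop IC of K and K hat}, in particular $(K(X)|K,v)$ is not valuation algebraic of type II, that proposition applies and yields
\[ \widehat{K}^h. IC(K(X)|K,v) = IC(\widehat{K}(X)|\widehat{K},v). \]
Now I would use the hypothesis that $(K,v)$ is henselian: then $K^h = K$, and by [\ref{Warner topo fields}, Theorem 32.19] the completion $(\widehat{K},v)$ is again henselian, so $\widehat{K}^h = \widehat{K}$. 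Hence the displayed equality simplifies to $\widehat{K}. IC(K(X)|K,v) = IC(\widehat{K}(X)|\widehat{K},v)$, and the hypothesis that $IC(K(X)|K,v)|K$ is finite lets us set $L := IC(K(X)|K,v) = \overline{K}\sect K(X)^h$, a finite (separable-algebraic) extension of $K$.

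It then remains only to identify $L.\widehat{K}$ with the completion of $L$. Since $(L|K,v)$ is a finite extension of valued fields, $\widehat{L} = L.\widehat{K}$, so
\[ IC(\widehat{K}(X)|\widehat{K},v) = \widehat{K}.L = L.\widehat{K} = \widehat{L}, \]
that is, $IC(\widehat{K}(X)|\widehat{K},v)$ is the completion of $IC(K(X)|K,v)$, as claimed. There is essentially no obstacle in this argument: everything reduces to checking that the hypotheses of Proposition \ref{Prop IC of K and K hat} hold and that henselianity is inherited by the completion, both of which are immediate from the quoted results. Alternatively, one could combine the density statement of the preceding Remark with the observation that, being of the form $L.\widehat{K} = \widehat{L}$, the field $IC(\widehat{K}(X)|\widehat{K},v)$ is complete.
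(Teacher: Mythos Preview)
Your proof is correct and follows exactly the route the paper has in mind: apply Proposition \ref{Prop IC of K and K hat}, use henselianity of $(K,v)$ and of its completion to replace $\widehat{K}^h$ by $\widehat{K}$, and then invoke the identity $\widehat{L}=L.\widehat{K}$ for finite extensions from Section \ref{Sect prelims}. The paper in fact states the corollary without proof, regarding it as immediate from Proposition \ref{Prop IC of K and K hat} and the preceding Remark; your write-up simply makes these steps explicit.
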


Since we do not have a precise description of the implicit constant field for valuation algebraic extensions, the problem appears to be more complicated when $(K(X)|K,v)$ is valuation algebraic of type II. However, we have a satisfactory answer when the henselization of $K$ is separably tame.

\begin{Proposition}
	Let $(\overline{\widehat{K}(X)}|K,v)$ be an extension of valued fields. Assume that $(K(X)|K,v)$ is valuation algebraic of type II. Further, assume that $K^h$ is separably tame. Then,
	\[ \widehat{K}^h. IC(K(X)|K,v) = IC(\widehat{K}(X)|\widehat{K},v).  \]
\end{Proposition}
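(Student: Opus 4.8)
# Proof Proposal

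The plan is to reduce the valuation-algebraic-of-type-II case to the already-established density/ramification machinery, exploiting the extra hypothesis that $K^h$ is separably tame. The inclusion $\widehat{K}^h.IC(K(X)|K,v) \subseteq IC(\widehat{K}(X)|\widehat{K},v)$ is free from \eqref{eqn IC and completion}, so the entire content is the reverse inclusion. First I would observe that since $K^h$ is separably tame, by Theorem \ref{Thm K^c properties}(ii) the completion $(\widehat{K},v)$ is tame, hence $\widehat{K}^h$ is tame as well; in particular both $K^h$ and $\widehat{K}^h$ are separable-algebraically maximal and their absolute ramification fields coincide with their separable-algebraic closures. The key structural fact I would invoke is that for a separably tame base the implicit constant field of any extension $(E(X)|E,v)$ is as small as possible: $IC(E(X)|E,v) = E^h$ when the extension is valuation algebraic. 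This should follow because a separably tame field has no nontrivial separable immediate algebraic extensions and a valuation-algebraic $(E(X)|E,v)$ forces $vE(X)/vE$ torsion and $E(X)v|Ev$ algebraic, so any element of $\overline{E}\cap E(X)^h$ that is separable over $E^h$ would generate an immediate extension, which a separably tame field cannot admit beyond $E^h$ itself.

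With that in hand the proof becomes almost immediate. Applying the structural fact to $E=K$, which is permissible because $(K(X)|K,v)$ is valuation algebraic, gives $IC(K(X)|K,v) = K^h$, so the left-hand side of the claimed equality is just $\widehat{K}^h$. Next I would apply the same structural fact to $E=\widehat{K}$: by Proposition \ref{Prop valn alg induced} the induced extension $(\widehat{K}(X)|\widehat{K},v)$ is again valuation algebraic, and $\widehat{K}^h$ is (separably) tame, so $IC(\widehat{K}(X)|\widehat{K},v) = \widehat{K}^h$ as well. Both sides equal $\widehat{K}^h$ and we are done. I would double-check that the hypotheses of Proposition \ref{Prop valn alg induced} are met: we are given $(\overline{\widehat{K}(X)}|K,v)$, and the assumption that $(K(X)|K,v)$ is valuation algebraic of type II in particular means $(\widehat{K}(X)|\widehat{K},v)$ is valuation algebraic, so the proposition applies verbatim.

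The main obstacle I anticipate is justifying the structural fact that separable tameness of $E^h$ forces $IC(E(X)|E,v) = E^h$ in the valuation-algebraic case; this is the only step not transparently a citation of the excerpt. The argument should run: take $b \in \overline{E}\cap E(X)^h$; then $b$ is separable-algebraic over $E^h$, and since $E^h(b) \subseteq E(X)^h$ we have that $(E^h(b)|E^h,v)$ embeds into an immediate extension, forcing $vE^h(b)=vE^h$ and $E^h(b)v = E^h v$ (because $vE(X)/vE$ is torsion and $E(X)v|Ev$ algebraic over a tame, hence algebraically maximal with $p$-divisible value group and perfect residue field, base — so no nontrivial value or residue contribution is possible for a separable extension). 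Thus $(E^h(b)|E^h,v)$ is a finite separable immediate extension of a separably algebraically maximal field, hence trivial, giving $b \in E^h$. One must handle the characteristic-exponent bookkeeping carefully using \ref{Kuh vln model}, Lemma 11.8 as recalled in the proof of Theorem \ref{Thm K^c properties}. Alternatively, if a cleaner citation to the minimal-fields literature (e.g. \ref{Dutta min fields implicit const fields}) giving $IC = $ henselization under separable tameness is available, I would use that directly and skip the hands-on argument entirely.
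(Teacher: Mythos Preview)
Your proposal contains a fundamental misreading of the definitions that invalidates the entire argument. You write that ``the assumption that $(K(X)|K,v)$ is valuation algebraic of type II in particular means $(\widehat{K}(X)|\widehat{K},v)$ is valuation algebraic, so the proposition applies verbatim.'' This is exactly backwards: by the paper's Case IIB (and Theorem \ref{Thm CSKP valn alg}), \emph{type II} is precisely the case where the induced extension $(\widehat{K}(X)|\widehat{K},v)$ is valuation \emph{transcendental}, with a unique pair of definition $(a,\g)$ satisfying $a\in\overline{\widehat{K}}\setminus\overline{K}$. Consequently Proposition \ref{Prop valn alg induced} does not apply, and your plan to compute $IC(\widehat{K}(X)|\widehat{K},v)$ via the ``valuation algebraic'' structural fact collapses.

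Independently, the ``structural fact'' you rely on --- that $IC(E(X)|E,v)=E^h$ whenever $E^h$ is separably tame and $(E(X)|E,v)$ is valuation algebraic --- is false as stated, and the sketch you give for it does not work. From $E^h(b)\subseteq E(X)^h$ you only get that $vE^h(b)/vE^h$ is torsion and $E^h(b)v|E^hv$ is algebraic; separable tameness of $E^h$ does \emph{not} rule out nontrivial ramification index coprime to $p$ or nontrivial separable residue extension, so $(E^h(b)|E^h,v)$ need not be immediate. Indeed the paper's own proof computes $IC(K(X)|K,v)=K^h(a_\nu\mid\nu<\l)$ via [\ref{Dutta imp const fields key pols valn alg extns}, Theorem 6.2], which is generally strictly larger than $K^h$. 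The paper's actual approach exploits the concrete description of $(\widehat{K}(X)|\widehat{K},v)$ as having unique pair $(a,\g)$ with $a$ the limit of the Cauchy sequence $\{a_\nu\}$, then applies Krasner's Lemma to show $\widehat{K}^h(a)|\widehat{K}^h(a_\nu\mid\nu<\l)$ is purely inseparable, and concludes via [\ref{Dutta min fields implicit const fields}, Theorem 7.2].
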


\begin{proof}
	We have observed in [\ref{Dutta imp const fields key pols valn alg extns}, Section 5] that we can construct sequences $\{\g_\nu\}_{\nu<\l} \subseteq v(X-\overline{K})$ and $\{a_\nu\}_{\nu<\l} \subseteq \overline{K}$ such that $\{\g_\nu\}_{\nu<\l}$ is cofinal in $v(X-\overline{K})$, $v(X-a_\nu) = \g_\nu$ and $(a_\nu,\g_\nu)$ is a minimal pair of definition for $v_{a_\nu,\g_\nu}$ over $K$ for all $\nu<\l$. $(K(X)|K,v)$ being a valuation algebraic extension implies that $(\overline{K}(X)|\overline{K},v)$ is immediate and hence $\g_\nu\in v\overline{K}$. In light of [\ref{Dutta min fields implicit const fields}, Proposition 3.6], we can thus further assume that $a_\nu$ is separable-algebraic over $K$. The assumption that $K^h$ is separably tame implies that $K^\sep = K^r$. It now follows from [\ref{Dutta imp const fields key pols valn alg extns}, Theorem 6.2] that
	\[ IC(K(X)|K,v) = K^h(a_\nu\mid \nu<\l). \]
	It follows from Theorem \ref{Thm CSKP valn alg} that $\{a_\nu\}_{\nu<\l}$ is a Cauchy sequence in $(\overline{K},v)$ and the unique limit $a\in\widehat{\overline{K}}$ is algebraic over $\widehat{K}$. Moreover, $(a,\g)$ is the unique pair of definition for the induced extension $(\widehat{K}(X)|\widehat{K},v)$. It then follows from [\ref{Dutta min fields implicit const fields}, Theorem 7.2] that $IC(\widehat{K}(X)|\widehat{K},v)$ is the separable-algebraic closure of $\widehat{K}^h$ in $\widehat{K}^h(a)$. Therefore, $IC(\widehat{K}(X)|\widehat{K},v) = \widehat{K}^h$ whenever $a$ is purely inseparable over $\widehat{K}^h$. The inclusions $\widehat{K}^h \subseteq \widehat{K}^h.IC(K(X)|K,v) \subseteq IC(\widehat{K}(X)|\widehat{K},v) = \widehat{K}^h$ then imply that all the containments are actually equalities. We have thus proved the assertion in this case.
	
	\pars Otherwise, $a$ is not purely inseparable over $\widehat{K}^h$. So $\kras(a,\widehat{K}^h)$ is well-defined. Note that $\kras(a,\widehat{K}^h) \in v\overline{\widehat{K}} = v\overline{K}$. As a consequence of $\{a_\nu\}_{\nu<\l}$ being a Cauchy sequence in $(\overline{K},v)$, we can take some $a_\nu$ such that $v(a-a_\nu) = \g_\nu > \kras(a,\widehat{K}^h)$. It now follows from Krasner's Lemma that $\widehat{K}^h(a,a_\nu)|\widehat{K}^h(a_\nu)$ is purely inseparable. Consequently, $\widehat{K}^h(a,a_\nu\mid \nu<\l)|\widehat{K}^h(a_\nu\mid \nu<\l)$ is purely inseparable. The relations $K^h(a_\nu\mid \nu<\l) = IC(K(X)|K,v) \subseteq IC(\widehat{K}(X)|\widehat{K},v) \subseteq \widehat{K}^h(a)$ then imply that
	\[ \widehat{K}^h(a)|\widehat{K}^h(a_\nu\mid \nu<\l) \text{ is purely inseparable}.  \]
	Since we chose $a_\nu$ to be separable-algebraic over $K$, the extension $\widehat{K}^h(a_\nu\mid \nu<\l)|\widehat{K}^h$ is also separable-algebraic. It now follows from [\ref{Dutta min fields implicit const fields}, Theorem 7.2] that
	\[ IC(\widehat{K}(X)|\widehat{K},v) = \widehat{K}^h(a_\nu\mid \nu<\l). \]
	We have thus proved the proposition. 
\end{proof}



\end{document}